\DeclareMathOperator{\AP}{AP}
\DeclareMathOperator{\As}{As}
\DeclareMathOperator{\Com}{Com}
\DeclareMathOperator{\Lie}{Lie}
\DeclareMathOperator{\MP}{MP}
\DeclareMathOperator{\Var}{Var}
\title{$\delta$-Poisson and transposed $\delta$-Poisson algebras}
\authors{Hani Abdelwahab, Ivan Kaygorodov and Bauyrzhan Sartayev}
\abstract{%
    We present a~comprehensive study of two new Poisson-type algebras. Namely, we are working with $\delta$-Poisson and transposed $\delta$-Poisson algebras. Our research shows that these algebras are related to many interesting identities. In particular, they are related to shift associative algebras, $F$-manifold algebras, algebras of Jordan brackets, etc. We classify simple $\delta$-Poisson and transposed $\delta$-Poisson algebras and found their depolarizations. We study $\delta$-Poisson and mixed-Poisson algebras to be Koszul and self-dual. Bases of the free $\delta$-Poisson and mixed-Poisson algebras generated by a~countable set $X$ are constructed.
    }
\keywords{$\delta$-Poisson algebra, transposed $\delta$-Poisson algebra, operad, free algebra.}
\begin{document}
	\section*{Introduction}

	Poisson algebras arose from the study of Poisson geometry in the 1970s and have appeared in an extremely wide range of areas in mathematics and physics, such as Poisson manifolds, algebraic geometry, operads, quantization theory, quantum groups, and classical and quantum mechanics. The study of Poisson algebras also led to other algebraic structures, such as non-commutative Poisson algebras, generic Poisson algebras, transposed Poisson algebras,
	Novikov--Poisson algebras, algebras of Jordan brackets and generalized Poisson algebras,
	$F$-manifold algebras, quasi-Poisson algebras,
	Gerstenhaber algebras,
	Poisson bialgebras, double Poisson algebras,
	Poisson $n$-Lie algebras, etc.
	$\big($see,~\cite{fer23,LSB,aae23,dzhuma,AFR,LB,MF,TP1,kms,sh93,GR} and references in~\cite{k23}$\big)$. All these classes are related in some way to Poisson algebras; for that reason, we call them Poisson-type algebras.

	The present paper is dedicated to the study of two new Poisson-type algebras. Namely, we are working with $\delta$-Poisson and transposed $\delta$-Poisson algebras. The motivation for studying anti-Poisson (i.e., $\delta=-1$) algebras comes from a~recent study of shift associative algebras~\cite{aks24};	$\delta$-Poisson algebras arose as a~generalization of Poisson and anti-Poisson algebras and, on the other hand, they are well related to $\delta$-derivations introduced by Filippov in~\cite{fil1}. Dual notion of the $\delta$-Poisson algebra by exchanging the roles of the two binary operations in the Leibniz rule is called transposed $\delta$-Poisson algebras (recently, the notion of transposed Poisson algebras was introduced in~\cite{TP1}). As we proved, the behavior of $\delta$-Poisson algebras for $\delta\neq1$ differs from that of Poisson algebras. Namely, they are almost $3$-step nilpotent algebras and do not have simple algebras (Proposition~\ref{antixyzt} and Theorem~\ref{dPsimple}). We proved that transposed $\delta$-Poisson algebras shared many known identities from~\cite{TP1}, but unlike to transposed Poisson algebras, transposed anti-Poisson algebras have simple algebras in the complex finite-dimensional case (Theorem~\ref{simpletrdelta}). The tensor product of two $\delta$-Poisson (resp. transposed $\delta$-Poisson) algebras gives a~$\delta$-Poisson (resp. transposed $\delta$-Poisson) algebra (Theorems~\ref{tensdp} and~\ref{tenstdp}). Section~\ref{onemu} is dedicated to the study of $\delta$-Poisson and transposed $\delta$-Poisson algebras in one multiplication (their depolarizations). For example, Proposition~\ref{depol} gives identities of the depolarization of mixed-Poisson algebras (in particular, we have an identity of the first associative type $\sigma=(13)$ considered in~\cite{aks24}). Section~\ref{free} describes a~basis of a~free $\delta$-Poisson and mixed-Poisson algebras generated by a~countable set. Section~\ref{dualoper} is dedicated to studying the operad $\delta$-$\textrm{P}$ governed by the variety of $\delta$-Poisson algebras. Namely, it was proved that the operad $\delta$-$\textrm{P}$ is self-dual (Theorem~\ref{26}) and the operad governed by the variety of anti-Poisson algebra is not Koszul (Theorem~\ref{27}). In Theorem~\ref{36}, it was proved that the dual mixed-Poisson operad isomorphic to the free product of operads Lie and As-Com.

	\subsection*{Notation}
	We do not provide some well-known definitions
	(such as definitions of Lie algebras,
	Jordan algebras, nilpotent algebras, solvable algebras, idempotents, etc.) and refer the readers to consult previously published papers (for example,~\cite{aks24,Albert, TP1}). For the commutator, anticommutator, and associator, we will use the standard notations:
	\begin{center}
		$[x,y]: = \frac 12(xy-yx)$, \
		$x \circ y: = \frac 12 (xy+yx)$ \ and \
		$(x,y,z):=(xy)z-x(yz)$.
	\end{center}
	In general, we are working with the complex field, but some results are correct for other fields. We also almost always assume that $\delta \neq 1$ and algebras with two multiplications under our consideration are nontrivial, i.e. they have both nonzero multiplications. In commutative and anticommutative algebras, the full multiplication table can be recovered from symmetry or antisymmetry of multiplication.

	\section{New Poisson-type algebras}
	\subsection{$\delta$-Poisson algebras}

	\begin{definition}
		An algebra $(\textrm{P}, \cdot, \{\cdot,\cdot\})$ is defined to be an {anti-Poisson algebra} (resp., anti-Poisson-Jordan algebra), if $(\textrm{P}, \cdot)$ is a~commutative associative (resp., Jordan) algebra,
		$(\textrm{P}, \{\cdot,\cdot\})$ is a~Lie algebra and the following identity holds$:$
		\begin{equation}
			\{x y, z \} \ = \ -\big(x \{ y,z \} +\{x,z\} y\big).
		\end{equation}
	\end{definition}

	A motivation for studying anti-Poisson algebras comes from a~study of shift associative algebras given in~\cite{aks24}. Namely, the next observation was obtained.

	\begin{theorem}\label{Jor-admiss}
		Let $(\mathcal{A},\cdot)$ be a~shift associative algebra, i.e. an algebra satisfying the identity $(xy)z=y(zx)$. Then $(\mathcal{A}, \circ, [\cdot,\cdot])$ is anti-Poisson-Jordan;
		$(\mathcal{A}, \circ, [\cdot,\cdot])$ is an anti-Poisson algebra if and only if $(\mathcal{A},\cdot)$ satisfies the identity $[[x,y],z]=0$.
	\end{theorem}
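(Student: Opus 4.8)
The plan is to reduce everything to the single rewriting rule furnished by shift associativity. First I would record that $(xy)z = y(zx)$ is the same identity as $u(vw)=(wu)v$ (just rename the variables), so that in $\mathcal{A}$ every right-normed triple product can be turned into a left-normed one, and conversely, at the cost of a cyclic shift of the arguments; in particular every multilinear degree-$3$ expression is a combination of the six left-normed monomials $(x_{\sigma(1)}x_{\sigma(2)})x_{\sigma(3)}$, and each identity below will be checked by collecting the coefficients of those six monomials. I would also note the immediate consequence $x\cdot x^2 = (xx)x = x(xx) = x^2\cdot x$, i.e. $[x,x^2]=0$.

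The computational core is the associator identity: I claim that in any shift associative algebra
\[
(x\circ y)\circ z - x\circ(y\circ z) \;=\; -\,[[z,x],y].
\]
To prove it I would expand both iterated $\circ$-products into eight (right- or left-normed) monomials, use the rewriting rule to bring the four right-normed ones to left-normed form, cancel, and recognise the surviving four-term expression as $2\,y\cdot[z,x]-2\,[z,x]\cdot y$, which is $-2[[z,x],y]$. With this identity the first assertion of the theorem essentially falls out. Since $x\circ x = x^2$, specialising to $(x^2,y,x)$ gives $(x^2\circ y)\circ x - x^2\circ(y\circ x) = -[[x,x^2],y] = 0$, which is exactly the Jordan identity for the commutative algebra $(\mathcal{A},\circ)$; hence $(\mathcal{A},\circ)$ is Jordan. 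For the final clause, the same identity shows that $(\mathcal{A},\circ)$ is associative --- equivalently, since $\circ$ is automatically commutative, commutative associative --- precisely when $[[z,x],y]=0$ identically, i.e. when $[[x,y],z]\equiv 0$.

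It remains to check that $(\mathcal{A},[\cdot,\cdot])$ is always a Lie algebra and that \eqref{antipois} always holds (with $\cdot$ read as $\circ$ and $\{\cdot,\cdot\}$ as $[\cdot,\cdot]$); together with the above this gives that $(\mathcal{A},\circ,[\cdot,\cdot])$ is anti-Poisson-Jordan unconditionally, and anti-Poisson exactly when $[[x,y],z]=0$. Antisymmetry of $[\cdot,\cdot]$ is immediate from its definition. For the Jacobi identity I would write $[[x,y],z]$, $[[y,z],x]$, $[[z,x],y]$ in left-normed form through the rewriting rule and observe that the cyclic sum cancels monomial by monomial. For \eqref{antipois} I would expand $[x\circ y,z]$, $x\circ[y,z]$ and $[x,z]\circ y$, reduce everything to left-normed monomials, and verify that the three contributions add up to zero.

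I do not expect a conceptual obstacle: the whole argument is linear bookkeeping governed by one rewriting rule. The step most prone to error is the derivation of the associator identity $-[[z,x],y]$, which combines a double expansion of $\circ$, several applications of the rewriting rule (in both directions), and a final repackaging of a four-term combination as a nested commutator; I would guard against slips by re-checking it --- and the Jacobi and \eqref{antipois} computations --- inside the free shift associative algebra on three generators with its basis of six left-normed monomials. Two minor points deserve to be stated: the coefficients $\tfrac12$ in the definitions of $\circ$ and $[\cdot,\cdot]$ require $\mathrm{char}\neq 2$, which holds over $\mathbb{C}$; and in this characteristic the non-linearised Jordan identity serves as the definition of a Jordan algebra, which is what the computation above verifies.
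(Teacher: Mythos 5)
Your proof is correct, and it is self-contained, which is more than the paper offers: the paper states Theorem \ref{Jor-admiss} as an observation imported from the reference on shift associative algebras (\cite{aks24}) and gives no proof, so there is no in-paper argument to compare against. Your strategy --- rewrite $a(bc)=(ca)b$, reduce every multilinear cubic to left-normed monomials, and establish the single associator identity $(x\circ y)\circ z-x\circ(y\circ z)=-[[z,x],y]$ --- is the natural one, and I have checked that identity: expanding and rewriting gives $4\bigl((x\circ y)\circ z-x\circ(y\circ z)\bigr)=(xy)z+(xz)y-(zx)y-(zy)x$, which indeed equals $-4[[z,x],y]$ after rewriting $y(zx)$ and $y(xz)$. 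Everything downstream then works: $[x,x^2]=0$ gives the Jordan identity for $\circ$, the Jacobi identity and \eqref{antipois} cancel monomial by monomial after rewriting, and associativity of $\circ$ is exactly the vanishing of $[[z,x],y]$. One cosmetic slip: in the prose you identify the surviving four-term expression with $2y[z,x]-2[z,x]y$ and call it $-2[[z,x],y]$; with the paper's normalization $[u,v]=\tfrac12(uv-vu)$ that quantity is $-4[[z,x],y]$, matching the factor $4$ from the two $\circ$-expansions, so the displayed identity you actually use is right --- just fix the stated coefficient. A final small remark: for the ``only if'' direction you do not need any linear-independence claim about the six left-normed monomials in the free shift associative algebra; the exact associator identity already gives the equivalence, so you might drop the sentence suggesting a basis of six monomials (their independence is not needed and not obvious).
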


	\begin{definition}
		Let $({\mathcal A}, \cdot)$ be an algebra and $\delta$ be a~fixed complex number. Then a~linear map $\varphi$ is a~$\delta$-derivation if it satisfies
		\begin{center}
			$\varphi(x y) \ = \ \delta \big(\varphi(x)y+ x \varphi(y)\big)$.
		\end{center}
	\end{definition}

	If $\delta=1$ (resp., $\delta=-1$) we have a~derivation (resp., antiderivation\footnote{Let us remember that the notion of antiderivations plays an important role in the definition of mock-Lie algebras\cite{Z17}.}). The notion of $\delta$-derivations was introduced in a~paper by Filippov~\cite{fil1} $\big($see references in~\cite{k23,DET,zz}$\big)$. Based on the principal relation between
	Poisson (resp., anti-Poisson) algebras and derivations (resp., antiderivations), we can introduce the definition of $\delta$-Poisson algebras with similar relations to $\delta$-derivations.

	\begin{definition}
		Let $\delta$ be a~fixed complex number. An algebra $(\textrm{P}, \cdot, \{\cdot,\cdot\})$ is defined to be {$\delta$-Poisson algebra}\footnote{For $\delta=1$ we have the definition of Poisson algebras.}, if $(\textrm{P}, \cdot)$ is a~commutative associative algebra,
		$(\textrm{P}, \{\cdot,\cdot\})$ is a~Lie algebra and the following identity holds$:$
		\begin{equation}\label{deltpois}
			\{x y, z \} \ =\ \delta \big(x \{ y,z \} +\{x,z\} y\big).
		\end{equation}
	\end{definition}

	\begin{definition}
		An algebra $(\textrm{P}, \cdot,\{\cdot,\cdot\})$ is called a~scalar-Poisson algebra if for any $\delta$, $\textrm{P}$ is a~$\delta $-Poisson algebra. Equivalently, the variety of scalar-Poisson algebras is the intersection of the varieties of $\delta_{1}$-Poisson algebras and $\delta_{2}$-Poisson algebras for any distinct $\delta_{1}$ and $\delta_{2}$. Equivalently, an algebra $(\mathrm{P},\cdot,\{\cdot,\cdot \})$ is called a~scalar-Poisson algebra, if $(\mathrm{P},\cdot )$ is a~commutative associative algebra, $(\mathrm{P},\{\cdot,\cdot \})$ is a~Lie algebra and the following identities hold:
		\begin{center}
			$\{xy,z\} \ = \ 0, \ x\{y,z\}+\{x,z\}y \ =\ 0$.
		\end{center}
	\end{definition}

	\begin{proposition}\label{antixyzt}
		Let $\delta\neq1$ and $(\textrm{P}, \cdot, \{\cdot,\cdot\})$ is a~$\delta$-Poisson algebra, then
		\begin{eqnarray}\label{antiP}
			\{xy,zt\} \ = \ \{x,yz\}t \ = \ \{x,yzt\} \ = \ \delta \{x,y\}zt\ = \ \{ xy, \{z,t\}\} \ = \ 0,\\
			\label{cycl} \{xy,z\} + \{yz,x\}+\{zx,y\} \ = \ 0.
		\end{eqnarray}
	\end{proposition}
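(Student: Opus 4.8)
The plan is to exploit the two identities we are given — commutativity/associativity of $\cdot$, the Jacobi identity for $\{\cdot,\cdot\}$, and the $\delta$-Leibniz rule \eqref{deltpois} — and to play the Leibniz rule against itself by expanding a bracket of a product in two different ways. The key tension is that $\delta\neq 1$: applying \eqref{deltpois} and then a second manipulation typically produces the same quantity multiplied by $\delta$ (or $\delta^2$), and since $\delta^2\neq\delta$ in general one concludes the quantity vanishes, unless $\delta=0$, in which case everything on the right-hand side of \eqref{deltpois} is already zero and the conclusions are immediate. So I would first dispose of the case $\delta=0$ in one line, and then assume $\delta\neq 0,1$.

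First I would prove $\{xy,z\}=\delta\{x,y\}z$-type collapse by a ``three products'' computation: expand $\{(xy)z,t\}$ using \eqref{deltpois} with the split $(xy)\cdot z$, and also with the split $x\cdot(yz)$, using associativity $(xy)z=x(yz)$. Each expansion gives a sum of three terms, each of the form $\delta\cdot(\text{product})\cdot\{\text{two letters},\text{letter}\}$; reapplying \eqref{deltpois} to the bracket-of-a-product terms inside pulls out another factor of $\delta$. Matching the two expansions and using that $\cdot$ is free of relations beyond commutativity/associativity yields linear relations among expressions like $\{x,y\}zt$ and $x\{y,z\}t$, and the factor mismatch $\delta$ vs.\ $\delta^2$ forces $\{x,y\}zt=0$, hence $\delta\{x,y\}zt=0$, i.e.\ the fourth equality in \eqref{antiP}. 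Once $\{x,y\}zt=0$ is known, the identities $\{x,yzt\}=0$ and $\{xy,zt\}=0$ and $\{x,yz\}t=0$ follow quickly: e.g.\ $\{x,yz\}t = \delta^{-1}\{y(zt)\cdot\text{something}\}$ — more carefully, expand $\{x,(yz)t\}$ by \eqref{deltpois} splitting $(yz)\cdot t$, obtaining $\delta(yz)\{t,x\}+\delta\{yz,x\}t$; the first summand is $0$ by the just-proved fact (it is $\{t,x\}$ times a product of three letters, after reassociating), and expanding $\{yz,x\}$ again gives $\delta$ times terms that are again products of three letters times a bracket of two letters, all zero. This chains down to give all five vanishing statements in \eqref{antiP}; the term $\{xy,\{z,t\}\}$ is handled by treating $\{z,t\}$ as a single element $w$ and using $\{xy,w\}=\delta(x\{y,w\}+\{x,w\}y)$, then noting each summand is a product-times-bracket expression already shown to vanish (here one uses that $\{x,w\}=\{x,\{z,t\}\}$, and $x\{y,\{z,t\}\}$ is a product of $x$ with a bracket, which after one more application of \eqref{deltpois} in reverse is covered).

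For the cyclic identity \eqref{cycl}, I would write out $\{xy,z\}$, $\{yz,x\}$, $\{zx,y\}$ each via \eqref{deltpois}:
\[
\{xy,z\}+\{yz,x\}+\{zx,y\} = \delta\big(x\{y,z\}+\{x,z\}y + y\{z,x\}+\{y,x\}z + z\{x,y\}+\{z,x\}y\big).
\]
Grouping by which letter is ``outside'': the coefficient of (say) $\{y,z\}$ collects $x\{y,z\}$ from the first term and $-x\{y,z\}$ coming from $y\{z,x\}=-y\{x,z\}$ paired appropriately — in fact all six terms cancel in pairs using antisymmetry of $\{\cdot,\cdot\}$ and commutativity of $\cdot$ (e.g.\ $\{x,z\}y$ and $\{z,x\}y$ cancel, etc.), so the whole bracket is $0$ and hence \eqref{cycl} holds with no case analysis on $\delta$ at all; in fact \eqref{cycl} is just the $\delta$-multiple of the elementary identity $\sum_{\text{cyc}}\big(x\{y,z\}+\{x,z\}y\big)=0$, which is a formal consequence of antisymmetry alone.

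The main obstacle I anticipate is bookkeeping in the first step: making the ``expand $\{(xy)z,t\}$ two ways'' argument actually isolate $\{x,y\}zt$ cleanly rather than a tangled linear combination of several cubic expressions. The safe route is to set up notation for the finitely many multilinear monomials of the shape (product of three of $x,y,z,t$) $\cdot$ (bracket of the remaining two variables) — there are only a handful up to the symmetries of $\cdot$ — expand both ways into this basis, and read off the relations; the $\delta\neq\delta^2$ discrepancy then does the rest. I would present this computation compactly rather than term-by-term, since it is linear algebra over $\mathbb{C}(\delta)$ once the monomials are fixed.
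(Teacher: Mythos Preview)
Your overall strategy matches the paper's: dispose of $\delta=0$, then for $\delta\neq0,1$ expand a bracket involving a triple product in two ways and use the $\delta$ vs.\ $\delta^2$ mismatch. The paper expands $\{x,yzt\}$ rather than $\{(xy)z,t\}$, but your version leads to the same intermediate relation $\{x,y\}zt=\{x,t\}yz$, from which the chain $\{x,y\}zt=\{x,z\}yt=-\{z,x\}yt=-\{z,y\}xt=\{y,z\}xt=\{y,x\}zt$ forces $\{x,y\}zt=0$. The remaining vanishings $\{x,yzt\}=\{xy,zt\}=\{x,yz\}t=0$ and the cyclic identity \eqref{cycl} then follow as you describe.

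There is, however, a genuine gap in your treatment of $\{xy,\{z,t\}\}=0$. Writing $w=\{z,t\}$ and expanding $\{xy,w\}=\delta(x\{y,w\}+\{x,w\}y)$ leaves you with $x\{y,\{z,t\}\}$ and $\{x,\{z,t\}\}y$, and neither of these vanishes: in the free $\delta$-Poisson algebra the monomials $x_{j_1}\{x_{j_2},\{x_{j_3},x_{j_4}\}\}$ are linearly independent basis elements (cf.\ the set $\mathcal{B}_4$ in Theorem~\ref{basisAnti}). Your suggestion to apply \eqref{deltpois} ``in reverse'' is circular, since it just reproduces $\{xy,\{z,t\}\}$ on the other side. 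The paper instead uses the Jacobi identity: write $\{xy,\{z,t\}\}=\{\{xy,z\},t\}+\{z,\{xy,t\}\}$, expand each $\{xy,\cdot\}$ via \eqref{deltpois}, then expand once more; after collecting terms and reapplying Jacobi one finds $\{xy,\{z,t\}\}=\delta\,\{xy,\{z,t\}\}$, whence the conclusion since $\delta\neq1$.
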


	\begin{proof}
		If $\delta=0$, then our statement is obvious. Let us consider the case $\delta \notin \{0,1\}$. It is easy to see that
		\begin{longtable}{rclcr}
			$\{x, yzt\}$&$=$&$\delta\big(\{x,yz\}t+\{x,t\}yz\big)$&$=$&$\delta^2\{x,y\}zt+\delta^2\{x,z\}yt+\delta\{x,t\}yz$,\\
			$\{x, yzt\}$&$=$&$\delta\big(\{x,y\}zt+\{x,zt\}y\big)$&$=$&$\delta\{x,y\}zt+\delta^2\{x,z\}yt+\delta^2\{x,t\}yz$.
		\end{longtable}
		Subtracting, we have $\{x,y\}zt=\{x,t\}yz$, which gives
		\begin{longtable}{rcccccccccccl}
			$\{x,y\}zt$&$=$&$\{x,z\}yt$&$=$&$-\{z,x\}yt$&$=$&$-\{z,y\}xt$&$=$&$\{y,z\}xt$&$=$&$\{y,x\}zt$.
		\end{longtable}
		Hence, $\{x,y\}zt=0$ and $\{x,yzt\} =0$. Now, it is easy to see that
		\begin{longtable}{lcl}
			$ \{xy,zt\} $&$ =$&$ \ \delta^2 \big( \{x,z\} yt+ \{y,z\}xt+\{x,t\}yz+\{y,t\}xz \big)\ =\ 0$,\\
			$\{x,yz\}t $&$ = $&$ \delta\big(\{x,y \}zt + \{x,z\} yt\big) \ =\ 0$.
		\end{longtable}
		For the last equality from \eqref{antiP}, we consider the following observation.
		\begin{longtable}{lcl}
			$\{xy, \{z,t\}\}$ & $=$ & $
			\{\{xy,z\}, t\} + \{ z, \{xy,t\}\}$ \\

			& $=$ & $\delta \big( \{\{x,z\}y+x\{y,z\},t\} + \{z, \{x,t\}y+x\{y,t\}\} \big)$\\
			& $=$ & $\delta^2 \Big( \{x,z \} \{y,t\}+\{\{x,z\},t\}y+ \{x,t\} \{y,z\}+x\{\{y,z\},t\}+$\\
			\multicolumn{3}{r}{$ \{z,\{x,t\}\}y+\{z,y\}\{x,t\}+\{z,x\}\{y,t\}+x\{z,\{y,t\}\} \Big)$}\\

			& $=$ & $\delta^2 \big( \{x,\{z,t\}\}y + x\{y,\{z,t\}\} \big) \ = \ \delta \{xy,\{z,t\}\}$.
		\end{longtable}
		Hence, we have the last identity in \eqref{antiP}. To prove the identity \eqref{cycl}, we give the direct calculation.
		\[
			\{xy,z\} + \{yz,x\}+\{zx,y\}
            = \delta \big(\{x,z\}y+x\{y,z\}+ \{y,x\}z+y\{z,x\}+\{z,y\}x+z\{x,y\} \big)
            = 0.
        \qedhere
		\]
	\end{proof}

	Relation~\eqref{antiP} immediately gives the following Corollary.

	\begin{corollary}
		If $\delta \neq 1$, then there are no nontrivial\footnote{By nontrivial, as usually, we mean a~Poisson-type algebra with both nonzero multiplications.} $\delta$-Poisson algebras with a~unital commutative associative part.
	\end{corollary}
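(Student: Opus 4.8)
The plan is to reduce the statement to the identities already collected in Proposition \ref{antixyzt}. Suppose, for contradiction, that $(\mathrm{P},\cdot,\{\cdot,\cdot\})$ is a nontrivial $\delta$-Poisson algebra with $\delta\neq 1$ whose commutative associative part $(\mathrm{P},\cdot)$ possesses a unit $1$. Since nontriviality means that \emph{both} multiplications are nonzero, and the presence of a unit already forces $(\mathrm{P},\cdot)$ to be nonzero, it suffices to show that the Lie bracket is identically zero; this will contradict nontriviality.

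First I would dispose of the generic case $\delta\neq 0$. By Proposition \ref{antixyzt} (specifically the identity $\delta\{x,y\}zt=0$ from (\ref{antiP})), we may substitute $z=t=1$ and divide by $\delta$, obtaining $\{x,y\}=\{x,y\}\cdot 1\cdot 1=0$ for all $x,y\in\mathrm{P}$. The remaining case $\delta=0$ is handled directly from the defining identity (\ref{deltpois}), which then reads $\{xy,z\}=0$; putting $y=1$ gives $\{x,z\}=\{x\cdot 1,z\}=0$ for all $x,z\in\mathrm{P}$. In either case the bracket vanishes, contradicting nontriviality, which proves the corollary.

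I do not expect any genuine obstacle here: the entire content is carried by the almost $3$-step nilpotency established in Proposition \ref{antixyzt}. The only point requiring a little care is the case split at $\delta=0$, since Proposition \ref{antixyzt} is most transparently invoked when $\delta\neq 0$ (so that one can divide by $\delta$); for $\delta=0$ one simply reads off the conclusion from (\ref{deltpois}) after setting one argument equal to the unit.
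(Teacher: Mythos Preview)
Your argument is correct and is exactly the approach the paper intends: the corollary is stated immediately after Proposition~\ref{antixyzt} with no separate proof, so the reader is meant to specialize the identities in (\ref{antiP}) at the unit. The case split at $\delta=0$ is harmless but unnecessary, since the identity $\{xy,zt\}=0$ from (\ref{antiP}) holds for all $\delta\neq 1$ (including $\delta=0$) and already gives $\{x,z\}=0$ upon setting $y=t=1$.
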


	\begin{theorem}\label{dPsimple}
		If $\delta \neq 1$, then there are no nontrivial simple $\delta$-Poisson algebras.
	\end{theorem}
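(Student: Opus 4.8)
The plan is to show that in a $\delta$-Poisson algebra with $\delta\neq 1$, the identities of Proposition~\ref{antixyzt} force such a strong degeneration of the Lie structure that a nonzero abelian ideal (or a nonzero proper ideal) always appears, contradicting simplicity. First I would dispose of $\delta=0$: then $\{\cdot,\cdot\}$ is a Lie bracket satisfying $\{xy,z\}=0$, so the image of the commutative associative multiplication $A\cdot A$ is a two-sided ideal on which the bracket acts trivially from one slot, and in fact $A\cdot A$ is easily seen to be a proper ideal of the whole $\delta$-Poisson algebra unless one of the multiplications vanishes; simplicity is thus impossible for a nontrivial algebra. So assume $\delta\neq 0,1$.

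Next I would extract the structural consequences of Proposition~\ref{antixyzt}. From $\{x,y\}zt=0$ we get $(A\cdot A)$ annihilating all commutators under $\cdot$, i.e. $\{A,A\}\cdot(A\cdot A)=0$; from $\{xy,zt\}=0$ and $\{xy,\{z,t\}\}=0$ we get that $A\cdot A$ is a Lie ideal that moreover brackets trivially with all products and all commutators. The key step is to produce an honest ideal of the $\delta$-Poisson algebra. Consider $I:=A\cdot A + \{A,A\}$ together with whatever small corrections are needed; by (\ref{deltpois}) and (\ref{antiP}), $\{I,A\}\subseteq I$ (the brackets $\{xy,z\}$ expand via $\delta$ into terms in $A\cdot A$, and $\{\{x,y\},z\}$ lands in $\{A,A\}$), and $I\cdot A\subseteq A\cdot A\subseteq I$, so $I$ is a two-sided ideal for both operations. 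If $I\neq \mathrm{P}$ we are done; the remaining case is $\mathrm{P}=A\cdot A+\{A,A\}$, which I would push to a contradiction: combined with $\{A,A\}\cdot(A\cdot A)=0$, $\{A\cdot A,A\cdot A\}=0$ and $(A\cdot A)(A\cdot A)\subseteq A\cdot A$ being itself a nilpotent-type ideal (associativity gives $(xy)(zt)=((xy)z)t$, and Proposition~\ref{antixyzt} forces these to interact degenerately), one shows $A\cdot A$ is a nonzero proper ideal whenever $\cdot\neq 0$, and $\{A,A\}$ is a nonzero proper ideal whenever $\{\cdot,\cdot\}\neq 0$; nontriviality of the algebra guarantees one of these is a genuine proper nonzero ideal.

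The main obstacle I anticipate is the bookkeeping in the borderline case $\mathrm{P}=A\cdot A+\{A,A\}$: one must check that neither summand can be all of $\mathrm{P}$ without collapsing a multiplication, and that the candidate ideal is closed under \emph{both} operations simultaneously — the $\delta\neq 1$ hypothesis enters precisely here, since for $\delta=1$ the Poisson identity lets $\{A\cdot A,A\}$ escape $A\cdot A$ and the argument (correctly) fails. A clean way to organize this is to prove the auxiliary claim that $\mathrm{P}^{2}:=\mathrm{P}\cdot\mathrm{P}+\{\mathrm{P},\mathrm{P}\}$ is always a proper ideal: it is an ideal for $\cdot$ trivially, it is a Lie ideal, and by (\ref{antiP}) it is closed under cross-brackets; properness follows because a simple algebra with $\mathrm{P}=\mathrm{P}^2$ would, via the almost-$3$-step-nilpotency of Proposition~\ref{antixyzt}, satisfy enough vanishing to make the remaining structure a direct sum, contradicting simplicity. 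Either ideal---$\mathrm{P}^2$ if proper, or $A\cdot A$ resp. $\{A,A\}$ in the degenerate case---yields the contradiction, completing the proof.
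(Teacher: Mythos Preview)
Your proposal circles around the right idea but never lands on a complete argument, and it contains at least one false claim. The paper's proof is a three-line observation that you effectively have buried in your second paragraph but never isolate: $\mathrm{P}\cdot\mathrm{P}$ is an ideal of $(\mathrm{P},\cdot,\{\cdot,\cdot\})$ (closure under $\cdot$ is trivial, and closure under the bracket is exactly (\ref{deltpois})); simplicity forces $\mathrm{P}\cdot\mathrm{P}=\mathrm{P}$; then the identity $\{xy,zt\}=0$ from (\ref{antiP}) gives $\{\mathrm{P},\mathrm{P}\}=\{\mathrm{P}\cdot\mathrm{P},\mathrm{P}\cdot\mathrm{P}\}=0$, contradicting nontriviality. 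Nothing more is needed, and this covers $\delta=0$ uniformly.

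The detours through $I=\mathrm{P}\cdot\mathrm{P}+\{\mathrm{P},\mathrm{P}\}$ and $\mathrm{P}^{2}$ are not just superfluous; they introduce problems. Your assertion that ``$\{A,A\}$ is a nonzero proper ideal whenever $\{\cdot,\cdot\}\neq 0$'' is false: $\{A,A\}$ need not be closed under $\cdot$. In the free $\delta$-Poisson algebra on generators $a,b,c$ (see Theorem~\ref{basisAnti}), the element $a\{b,c\}$ is a basis monomial distinct from every pure Lie monomial, so $a\{b,c\}\notin\{A,A\}$. Likewise, your ``auxiliary claim'' that $\mathrm{P}^{2}$ is always a proper ideal is asserted with only a vague appeal to ``almost-$3$-step-nilpotency'' and a purported direct-sum decomposition; this is not a proof, and in fact once you know the clean argument you see that $\mathrm{P}^{2}$ can perfectly well equal $\mathrm{P}$ --- the contradiction comes not from properness of $\mathrm{P}^{2}$ but from the vanishing of the bracket on $\mathrm{P}\cdot\mathrm{P}$. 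Strip away everything except the single ideal $\mathrm{P}\cdot\mathrm{P}$ and the identity $\{xy,zt\}=0$, and you have the proof.
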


	\begin{proof}
		Let $(\textrm{P}, \cdot, \{\cdot, \cdot \})$ be a~simple $\delta$-Poisson algebra. Then, $\textrm{P} \cdot\textrm{P}$ be an ideal of $(\textrm{P}, \cdot, \{\cdot, \cdot \})$. It follows, that $(\textrm{P}, \cdot)$ is perfect, i.e. $\textrm{P} \cdot\textrm{P}=\textrm{P}$. Thanks to \eqref{antiP}, we obtain $\{ \textrm{P},\textrm{P}\}=0$ and the statement is proved.
	\end{proof}

	\begin{definition}
		Let $(\textrm{P}, \cdot)$ be a~commutative associative algebra and $(\textrm{P}, \{\cdot,\cdot\})$ be an anticommutative algebra, then
		\begin{enumerate}
			\item[\textrm{1.}] $(\textrm{P}, \cdot, \{\cdot,\cdot\})$ is $F$-manifold, if $(\textrm{P}, \{\cdot,\cdot\})$ is Lie and it satisfies
			\begin{align*}
				\{x y, z t \}=
                &\{x y, z \} t + \{x y, t \} z + x \{y, z t\} + y \{x, z t\}\\
                &- \big(x z \{y, t \} + y z \{x, t\} +
				y t \{x, z\} + x t \{y, z\}\big).
			\end{align*}
			\item[\textrm{2.}] $(\textrm{P}, \cdot, \{\cdot,\cdot\})$ is Jordan brackets, if it satisfies
			\begin{longtable}{rcl}
				$\{\{x, y\} z, t\} + \{\{y, t\} z, x\}+ \{\{t, x\} z, y\} $&$=$&$
				\{x, y\} \{z, t\}+ \{y, t\} \{z, x\} + \{t, x\} \{z, y\}, $\\
				$ \{y t, z\} x +\{x,z\} y t $&$=$&$ \{t x, z\} y + \{y,z\} t x$,\\
				$\{t x, y z\}+\{t y, x z\}+\{x y z, t\}$&$ =
				$&$\{ t y, z\} x + \{t x, z\} y+ x y \{z, t\}$.
			\end{longtable}
		\end{enumerate}
	\end{definition}

	The varieties of $F$-manifold algebras and algebras of Jordan brackets are principal varieties that generalized varieties of both Poisson and transposed Poisson algebras~\cite{kms}.

	\begin{theorem}
		Let $\delta\notin \big\{0,1\big\}$ and $(\textrm{P}, \cdot, \{\cdot,\cdot\})$ is a $\delta$-Poisson algebra, then
		\begin{enumerate}
			\item[\textrm{1.}] $(\textrm{P}, \cdot, \{\cdot,\cdot\})$ is $F$-manifold.

			\item[\textrm{2.}] $(\textrm{P}, \cdot, \{\cdot,\cdot\})$ is Jordan brackets if and only if it satisfies
			\begin{center}
				$\{x,y \}\{z,t \} +\{y,t \}\{z,x \}+\{t,x \}\{z,y \} \ =\ 0$\footnote{The present identity plays an important role in the theory of Poisson algebras. It is called strong~\cite{TP1}. Let us note that each transposed $\delta$-Poisson algebra ($\delta\neq 1$) also satisfies this identity (see Proposition~\ref{idtdpa}).}.
			\end{center}
		\end{enumerate}
	\end{theorem}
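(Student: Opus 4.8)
The plan is to derive both parts entirely from Proposition \ref{antixyzt}, whose consequences I will use throughout. Since $\delta\neq 0$, the relation $\delta\{x,y\}zt=0$ in (\ref{antiP}) yields $\{a,b\}cd=0$, i.e. a single bracket multiplied by two associative factors vanishes; moreover (\ref{antiP}) gives that any bracket with an associative product in one of its slots vanishes, possibly after multiplying by a further element: for instance $\{uv,wt\}=0$, $\{u,vwt\}=0$, $\{uv,w\}t=\delta\big(\{v,w\}ut+\{u,w\}vt\big)=0$, and (by antisymmetry) $\{xyz,t\}=-\{t,xyz\}=0$.

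For part (1), I would expand the right-hand side of the $F$-manifold compatibility identity term by term, using (\ref{deltpois}) and commutativity of $\cdot$. Each of $\{xy,z\}t$, $\{xy,t\}z$, $x\{y,zt\}$, $y\{x,zt\}$ becomes a sum of monomials of the form $\{a,b\}cd$, hence zero; the four remaining terms $xz\{y,t\}$, $yz\{x,t\}$, $yt\{x,z\}$, $xt\{y,z\}$ are already of that form, hence zero; and the left-hand side $\{xy,zt\}$ vanishes by Proposition \ref{antixyzt}. Since $({\rm P},\cdot)$ is commutative associative and $({\rm P},\{\cdot,\cdot\})$ is Lie by hypothesis, this shows $({\rm P},\cdot,\{\cdot,\cdot\})$ is an $F$-manifold algebra.

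For part (2), I would check the three defining identities of an algebra of Jordan brackets in turn. The second and third identities hold automatically: in each of them every term is a bracket with a product in one slot (possibly times a further element), or a bracket of the shape $\{a,b\}cd$, or the term $\{xyz,t\}$, and all of these vanish by Proposition \ref{antixyzt}. For the first identity I would expand each summand on the left via (\ref{deltpois}), namely
\[
\{\{x,y\}z,t\}=\delta\big(\{x,y\}\{z,t\}+\{\{x,y\},t\}z\big)
\]
and cyclically in $x,y,t$. Adding the three, the left-hand side becomes
\[
\delta\big(\{x,y\}\{z,t\}+\{y,t\}\{z,x\}+\{t,x\}\{z,y\}\big)+\delta\big(\{\{x,y\},t\}+\{\{y,t\},x\}+\{\{t,x\},y\}\big)z,
\]
and the second summand is exactly the Jacobian of $x,y,t$, which vanishes because $({\rm P},\{\cdot,\cdot\})$ is a Lie algebra. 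Writing $S:=\{x,y\}\{z,t\}+\{y,t\}\{z,x\}+\{t,x\}\{z,y\}$, the first Jordan-bracket identity thus reduces to $\delta S=S$, i.e. $(\delta-1)S=0$; since $\delta\neq 1$ this holds if and only if $S=0$. Hence $({\rm P},\cdot,\{\cdot,\cdot\})$ is an algebra of Jordan brackets precisely when $S=0$, which is the stated identity.

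I do not expect a genuine obstacle here; the work is careful bookkeeping against Proposition \ref{antixyzt}. The one step that deserves attention is noticing that the leftover ``bracket of a bracket'' terms produced when expanding the first Jordan-bracket identity reassemble into the Jacobi identity, after which the surviving scalar $\delta$ together with $\delta\neq 1$ closes the equivalence.
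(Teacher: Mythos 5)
Your proposal is correct and follows essentially the same route as the paper: everything is reduced to Proposition \ref{antixyzt} (all terms of the $F$-manifold identity and of the second and third Jordan-bracket identities vanish), and the first Jordan-bracket identity collapses to $(\delta-1)S=0$ after the Jacobi identity absorbs the $\{\{x,y\},t\}z$-type terms. The paper states this only in outline; your write-up supplies exactly the bookkeeping it leaves implicit.
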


	\begin{proof}
		First, if $(\textrm{P}, \cdot, \{\cdot,\cdot\})$ is $\delta$-Poisson, then due to Proposition~\ref{antixyzt}, it is $F$-manifold. Second, if $(\textrm{P}, \cdot, \{\cdot,\cdot\})$ is $\delta$-Poisson, then due to Proposition~\ref{antixyzt}, it satisfies the 2nd and 3rd JB identities. The first JB identity gives our statement.
	\end{proof}

	The next statement can be obtained by some trivial calculations.

	\begin{proposition}
		Let $(\textrm{P}, \cdot, \{\cdot,\cdot\})$ be a~$0$-Poisson algebra, then
		\begin{enumerate}
			\item[\textrm{1.}] $(\textrm{P}, \cdot, \{\cdot,\cdot\})$ is $F$-manifold if and only if it satisfies
			\begin{center}
				$x z \{y, t \} + y z \{x, t\} +
				y t \{x, z\} + x t \{y, z\}=0$.
			\end{center}
			\item[\textrm{2.}] $(\textrm{P}, \cdot, \{\cdot,\cdot\})$ is Jordan brackets if and only if it satisfies
			\begin{center}
				$\{x,y \}\{z,t \} +\{y,t \}\{z,x \}+\{t,x \}\{z,y \} \ =\ 0$.
			\end{center}
		\end{enumerate}
	\end{proposition}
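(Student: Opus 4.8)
The plan is to prove both parts by the same elementary device. I would substitute the defining identity of a $0$-Poisson algebra, $\{xy,z\}=0$ — equivalently, by antisymmetry of the Lie bracket, $\{z,xy\}=0$ — together with the vanishing relations recorded in Proposition \ref{antixyzt} (all of which hold at $\delta=0$), directly into the defining identities of $F$-manifold algebras and of algebras of Jordan brackets. Since in a $0$-Poisson algebra $({\rm P},\{\cdot,\cdot\})$ is already a Lie algebra, the only thing that needs checking in each case is the remaining structural identity: the $F$-manifold identity in part 1, and the three Jordan-bracket identities in part 2. The guiding observation is that in a $0$-Poisson algebra every monomial in which a $\cdot$-product occupies a slot of the bracket $\{\cdot,\cdot\}$ vanishes, so each of these (rather long) identities collapses to just a few surviving terms, which one reads off directly.

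For part 1 I would expand the $F$-manifold identity, $\{xy,zt\}=\{xy,z\}t+\{xy,t\}z+x\{y,zt\}+y\{x,zt\}-(xz\{y,t\}+yz\{x,t\}+yt\{x,z\}+xt\{y,z\})$, and note that $\{xy,zt\}$, $\{xy,z\}t$ and $\{xy,t\}z$ all vanish (a $\cdot$-product occupies the first slot of a bracket), while $x\{y,zt\}=-x\{zt,y\}=0$ and $y\{x,zt\}=-y\{zt,x\}=0$ for the same reason. Hence the $F$-manifold identity degenerates to exactly $xz\{y,t\}+yz\{x,t\}+yt\{x,z\}+xt\{y,z\}=0$, which gives both implications of part 1 at once.

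For part 2 the same substitution is applied to the three identities defining an algebra of Jordan brackets. In the first of them, each summand of the left-hand side has a $\cdot$-product as the first argument of the outer bracket, hence vanishes, while the right-hand side is exactly the strong identity $\{x,y\}\{z,t\}+\{y,t\}\{z,x\}+\{t,x\}\{z,y\}=0$; thus in a $0$-Poisson algebra the first Jordan-bracket identity is equivalent to the strong identity. In the second and third identities every term containing a product inside a bracket vanishes, and the remaining terms on the two sides cancel by the relations of Proposition \ref{antixyzt}, so these two identities are satisfied automatically. Granting this, a $0$-Poisson algebra is an algebra of Jordan brackets if and only if the strong identity holds: the forward implication is immediate since such an algebra satisfies the first identity, and the converse follows from the reductions above. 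The one genuine computation, and the step I would carry out most carefully, is precisely this bookkeeping for the second and third Jordan-bracket identities — expanding them term by term and confirming that the relations of Proposition \ref{antixyzt} annihilate every surviving monomial, so that nothing beyond the strong identity is imposed.
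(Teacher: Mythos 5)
Your reduction in part 1 is correct: in a $0$-Poisson algebra every bracket with a $\cdot$-product occupying one of its slots vanishes, so the $F$-manifold identity collapses to $xz\{y,t\}+yz\{x,t\}+yt\{x,z\}+xt\{y,z\}=0$, giving both implications at once. The same mechanism correctly reduces the first Jordan-bracket identity to the strong identity.

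The gap is in your treatment of the second and third Jordan-bracket identities, exactly at the step you yourself flag as ``the one genuine computation.'' After discarding the terms with a product inside a bracket, the second identity reduces to $\{x,z\}yt=\{y,z\}tx$ and the third to $xy\{z,t\}=0$, and neither of these follows from Proposition \ref{antixyzt} at $\delta=0$: the relevant entry of that proposition is $\delta\{x,y\}zt=0$, which is vacuous when $\delta=0$ (its derivation in the proof divides by $\delta$), so monomials of the form $\{x,y\}zt$ need \emph{not} vanish in a $0$-Poisson algebra. Indeed, by the paper's own remark in Section \ref{free}, the free $0$-Poisson algebra has the same basis as the free Poisson algebra, in which $xy\{z,t\}$ and $\{x,z\}yt-\{y,z\}tx$ are nonzero. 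For a concrete finite-dimensional instance, take $\mathcal{A}=\mathfrak{m}/\mathfrak{m}^{4}$ with $\mathfrak{m}=(u,v,w)\subset\mathbb{C}[u,v,w]$, equipped with $\{u,v\}=w=-\{v,u\}$ and all other brackets of basis monomials equal to zero: this is a $0$-Poisson algebra (every product lies in the center of the bracket) which satisfies the strong identity, yet with $x=y=z=u$, $t=v$ the third Jordan-bracket identity reads $0=u^{2}w\neq 0$. So your argument does not establish that the second and third identities hold automatically; the honest reduction shows that a $0$-Poisson algebra is Jordan brackets if and only if it satisfies the strong identity \emph{together with} $xy\{z,t\}=0$ (the latter also forcing the reduced form of the second identity), which is strictly stronger than what is claimed in part 2.
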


	\begin{corollary}
		Let $\mathcal A$ be a~cyclic associative algebra, i.e. it satisfies the identities
		\begin{center}
			$(xy)z=(yz)x=x(yz)$.
		\end{center}
		Then $(\mathcal A, \circ, [\cdot, \cdot])$ is scalar-Poisson, $F$-manifold, and Jordan bracket.
	\end{corollary}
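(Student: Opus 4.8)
The plan is to reduce everything to checking a small number of polynomial identities in $\mathcal{A}$, using only the cyclic associativity hypothesis $(xy)z = (yz)x = x(yz)$. First I would record the basic consequences of this hypothesis: since $(xy)z = x(yz)$, the algebra $\mathcal{A}$ is associative, so we may drop parentheses freely; and since $x(yz) = (yz)x$, every product of three elements is \emph{central} in the sense that it commutes past anything in a triple product. More precisely, from $xyz = yzx = zxy$ we get full cyclic invariance of all length-$3$ associative words, which immediately forces $[x,y]z = \frac12(xyz - yxz)$ and related expressions to collapse. In particular I would first show $[[x,y],z] = 0$, i.e. the commutator Lie algebra $(\mathcal{A},[\cdot,\cdot])$ is $2$-step nilpotent, and that $[x,y]\circ z$-type triple expressions vanish or symmetrize trivially.

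Next, to get that $(\mathcal{A},\circ,[\cdot,\cdot])$ is scalar-Poisson, by definition I must verify three things: $(\mathcal{A},\circ)$ is commutative associative, $(\mathcal{A},[\cdot,\cdot])$ is Lie, and the two scalar-Poisson identities $\{xy,z\} = 0$ and $x\{y,z\} + \{x,z\}y = 0$ hold (with $\{\cdot,\cdot\} = [\cdot,\cdot]$ and juxtaposition $= \circ$). Commutativity of $\circ$ is automatic; associativity of $\circ$ and the Jacobi identity for $[\cdot,\cdot]$ follow from associativity of $\mathcal{A}$ together with the cyclic relations (indeed Jacobi is immediate once $[[x,y],z]=0$, though I would check the full antisymmetric Jacobi sum directly). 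The heart of the matter is the two remaining identities: expanding $[x\circ y, z]$ and $x\circ[y,z] + [x,z]\circ y$ into the eight associative monomials each and repeatedly applying $uvw = vwu = wuv$ should make every term cancel in pairs. I expect this to be the main computational obstacle, but it is routine: the cyclic relation is powerful enough to identify any two monomials that are cyclic rotations of one another, and after that the alternating signs in the commutators do the rest.

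Finally, once scalar-Poisson is established, being an $F$-manifold algebra and an algebra of Jordan brackets comes essentially for free. A scalar-Poisson algebra is in particular a $\delta$-Poisson algebra for \emph{every} $\delta$, so by the preceding theorem it is an $F$-manifold algebra, and by that theorem together with Proposition~\ref{deltaxyzt} (the $\delta = 0$ case) it is an algebra of Jordan brackets provided the strong identity $\{x,y\}\{z,t\} + \{y,t\}\{z,x\} + \{t,x\}\{z,y\} = 0$ holds. That strong identity, after substituting $\{\cdot,\cdot\} = [\cdot,\cdot]$ and $\cdot = \circ$, is again a statement about length-$4$ associative words, and the cyclic relations together with $[[x,y],z]=0$ force it: each product $[a,b]\circ[c,d]$ is a combination of words in which a commutator appears inside a longer associative monomial, and such words vanish once we know triple products are fully cyclic. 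Alternatively, one can note the strong identity already follows from $\{xy,z\}=0$ and $x\{y,z\}+\{x,z\}y=0$ by the same Leibniz-type manipulation used elsewhere in the paper. So the only genuine work is the two scalar-Poisson identities in the previous paragraph; everything else is a citation to results already proved above.
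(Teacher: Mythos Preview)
Your proposal is correct and follows essentially the same route as the paper: verify $[[x,y],z]=0$ and $x\circ[y,z]+[x,z]\circ y=0$ by direct expansion using cyclic invariance of length-$3$ words, deduce the scalar-Poisson identities, and then invoke the preceding theorem (for some $\delta\neq0,1$, e.g.\ $\delta=-1$) to get $F$-manifold and Jordan brackets, the strong identity being trivial since $\{a,b\}\circ\{c,d\}=-\{\{a,b\},d\}\circ c=0$. The only cosmetic difference is that the paper invokes Theorem~\ref{Jor-admiss} (shift associativity) to obtain the anti-Poisson structure and associativity of $\circ$ in one stroke, rather than expanding $[x\circ y,z]$ and $(x\circ y)\circ z-x\circ(y\circ z)$ separately.
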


	\begin{proof}
		Thanks to Theorem~\ref{Jor-admiss},
		$(\mathcal A, \circ, [\cdot, \cdot])$ is anti-Poisson-Jordan. It is easy to see, that
		\begin{longtable}{rclcl}
			$[[x,y],z] $&$=$&$ xyz-yxz-zxy+zyx$&$=$&$0$, \\
			$x\circ[y,z]+[x,z]\circ y $&$=$&$ xyz-xzy+xzy-zxy$&$=$&$0$.
		\end{longtable}
		Then, due to Theorem~\ref{Jor-admiss}, it is anti-Poisson and $[x \circ y,z]=0$. Hence, $(\mathcal A, \circ, [\cdot, \cdot])$ is scalar-Poisson,
		$F$-manifold, and Jordan bracket.
	\end{proof}

	Let us remember a~classical result about Poisson algebras.

	\begin{proposition}\label{tensor}
	    Let $(\textrm{P}_1, \cdot_1, \{\cdot,\cdot \}_1 )$ and $(\textrm{P}_2, \cdot_2, \{\cdot,\cdot\}_2 )$ be two Poisson algebras. Define two operations $\cdot$ and $\{\cdot,\cdot \}$ on $\textrm{P}_1 \otimes \textrm{P}_2$ by
		\begin{longtable}{rcl}
			$(x_1 \otimes y_1 ) \cdot (x_2 \otimes y_2 )$ &$ =$&$ x_1 \cdot_1 x_2\otimes y_1 \cdot_2 y_2$,\\
			$\{x_1 \otimes y_1, x_2 \otimes y_2\}$&$= $&$
			\{x_1, x_2\}_1 \otimes y_1 \cdot_2 y_2 + x_1 \cdot_1 x_2 \otimes \{y_1, y_2\}_2$.
		\end{longtable}
	\noindent	Then $(\textrm{P}_1 \otimes \textrm{P}_2, \cdot, \{\cdot,\cdot\})$ is a~Poisson algebra.
	\end{proposition}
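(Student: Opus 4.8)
The plan is to verify the three defining conditions of a Poisson algebra directly on the tensor product $({\rm P}_1 \otimes {\rm P}_2, \cdot, \{\cdot,\cdot\})$, since this is essentially a routine but instructive computation. First I would check that $({\rm P}_1 \otimes {\rm P}_2, \cdot)$ is a commutative associative algebra: this is immediate, since the tensor product of two commutative associative algebras is again commutative associative, with the product defined componentwise. Second, I would check that $({\rm P}_1 \otimes {\rm P}_2, \{\cdot,\cdot\})$ is a Lie algebra. Antisymmetry is clear from the definition, since swapping the two arguments changes the sign in each summand (using antisymmetry of $\{\cdot,\cdot\}_1$ and $\{\cdot,\cdot\}_2$ respectively, together with commutativity of $\cdot_1$ and $\cdot_2$). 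For the Jacobi identity one expands $\{\{a,b\},c\}$ for $a = x_1\otimes y_1$, $b = x_2 \otimes y_2$, $c = x_3 \otimes y_3$; each bracket produces two terms, so the double bracket produces four terms per cyclic permutation, twelve in total. Grouping these by "type" (how many brackets land on the first tensor factor versus the second), the pure terms cancel by the Jacobi identities in ${\rm P}_1$ and ${\rm P}_2$, while the mixed terms (one bracket on each factor) cancel in pairs after using the Leibniz rule (\ref{deltpois}) with $\delta = 1$ to rewrite, say, $\{x_1 \cdot_1 x_2, x_3\}_1$ as $x_1 \cdot_1 \{x_2,x_3\}_1 + \{x_1,x_3\}_1 \cdot_1 x_2$.

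Third, and this is where the Leibniz/derivation identity (\ref{deltpois}) with $\delta = 1$ is really used, I would verify
\[
\{(x_1\otimes y_1)\cdot(x_2\otimes y_2),\, x_3\otimes y_3\} \;=\; (x_1\otimes y_1)\cdot\{x_2\otimes y_2,\, x_3\otimes y_3\} + \{x_1\otimes y_1,\, x_3\otimes y_3\}\cdot(x_2\otimes y_2).
\]
Expanding the left side gives $\{x_1\cdot_1 x_2,\, x_3\}_1 \otimes (y_1\cdot_2 y_2)\cdot_2 y_3 + (x_1\cdot_1 x_2)\cdot_1 x_3 \otimes \{y_1\cdot_2 y_2,\, y_3\}_2$; applying the Poisson Leibniz rule in each of ${\rm P}_1$ and ${\rm P}_2$ turns this into four terms, which one then regroups and recognizes as exactly the expansion of the right-hand side, again using associativity and commutativity of $\cdot_1$, $\cdot_2$ to match tensor factors. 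None of the steps presents a genuine obstacle; the only thing requiring care is bookkeeping — keeping track of the twelve terms in the Jacobi verification and grouping them correctly. I would present the Jacobi check and the Leibniz check explicitly (perhaps in a \texttt{longtable} display as elsewhere in the paper) and dispatch the commutative-associative and antisymmetry parts in a single sentence. It is also worth remarking at the end that this proposition is precisely the $\delta = 1$ case of Theorems \ref{tensdp} and \ref{tenstdp} announced in the introduction, so that the general statement there subsumes this classical one.
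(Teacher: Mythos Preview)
The paper does not actually prove Proposition~\ref{tensor}; it is stated there as a classical, well-known result (``Let us remember a classical result about Poisson algebras'') and is used only to set up the formulas for the operations on the tensor product before proving the genuinely new Theorems~\ref{tensdp} and~\ref{tenstdp}. Your proposed verification is correct and is exactly the $\delta=1$ specialization of the paper's proof of Theorem~\ref{tensdp}: that proof checks the $\delta$-Leibniz identity and then the Jacobi identity by the same expansion-and-regrouping you describe, tacitly leaving the commutative-associative and antisymmetry parts as obvious.

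One small correction to your closing remark: Proposition~\ref{tensor} is the $\delta=1$ case of Theorem~\ref{tensdp} only. Theorem~\ref{tenstdp} concerns \emph{transposed} $\delta$-Poisson algebras, whose defining identity~(\ref{deltranspois}) for $\delta=1$ reads $x\{y,z\}=\{xy,z\}+\{y,xz\}$, which is not the Poisson--Leibniz rule; so the classical Poisson statement is not a special case of that theorem.
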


	This property is also shared by transposed Poisson~\cite{TP1} and
	Novikov-Poisson~\cite{xu} algebras. On the other hand,
	$F$-manifold algebras and contact bracket algebras, in general, do not have this property~\cite{LSB, zus}. We receive that this property is also shared by $\delta$-Poisson algebras.

	\begin{theorem}\label{tensdp}
		Let $(\textrm{P}_1, \cdot_1, \{\cdot,\cdot \}_1 )$ and $(\textrm{P}_2, \cdot_2, \{\cdot,\cdot\}_2 )$ be two $\delta$-Poisson algebras. Define two operations as in Proposition~\ref{tensor}. Then $(\textrm{P}_1 \otimes \textrm{P}_2, \cdot, \{\cdot,\cdot\})$ is a~$\delta$-Poisson algebra.
	\end{theorem}

	\begin{proof}
		Let us consider $x_1, x_2,x_3 \in \textrm{P}_1$ and $y_1,y_2,y_3 \in \textrm{P}_2$, then
		\begin{longtable}{lcccl}
			$\{(x_1 \otimes y_1 ) \cdot (x_2 \otimes y_2 ), x_3 \otimes y_3 \}$&$
			= $&$\{x_1 \cdot_1 x_2 \otimes y_1 \cdot_2 y_2, x_3 \otimes y_3 \} $&$=$ \\
			\multicolumn{3}{c}{
			$\{x_1 \cdot_1 x_2, x_3\}_1 \otimes (y_1\cdot_2 y_2 \cdot_2 y_3 ) + (x_1\cdot_1 x_2\cdot_1 x_3 ) \otimes \{y_1\cdot_2 y_2, y_3 \}_2$}&$ =$ \\
			\multicolumn{3}{l}{$\delta\Big( \big(\{x_1, x_3\}_1 \cdot_1 x_2 + x_1 \cdot_1\{ x_2, x_3\}_1\big) \otimes (y_1\cdot_2 y_2 \cdot_2 y_3 )+$}\\
			\multicolumn{3}{r}{$(x_1\cdot_1 x_2\cdot_1 x_3 ) \otimes \big(y_1\cdot_2\{ y_2, y_3 \}_2 +\{y_1, y_3 \}_2\cdot_2 y_2\big) \Big)$} & $=$\\

			\multicolumn{3}{l}{$\delta\Big(
			\big( \{x_1, x_3\}_1 \otimes (y_1 \cdot_2 y_3 ) +(x_1 \cdot_1 x_3 ) \otimes \{y_1, y_3\}_2 \big) \cdot(x_2\otimes y_2 )+$}\\
			\multicolumn{3}{r}{$(x_1 \otimes y_1 ) \cdot \big(\{x_2, x_3\}_1 \otimes (y_2 \cdot_2 y_3 )+
			(x_2 \cdot_1 x_3 ) \otimes \{ y_2, y_3\}_2 \big) \Big)$} & $=$\\

			\multicolumn{3}{c}{$\delta\big( \{x_1 \otimes y_1, x_3 \otimes y_3\} \cdot (x_2 \otimes y_2 ) + (x_1 \otimes y_1 ) \cdot \{x_2 \otimes y_2, x_3 \otimes y_3\} \big)$.}
		\end{longtable}
		Hence, \eqref{deltpois} holds and for the Jacobi identity we have to see the following
		\begin{align*}
			\{\{x_1 \otimes y_1, x_2 \otimes y_2\}, x_3 \otimes y_3\}
            ={ }&{ } \{\{x_1,x_2\}_1 \otimes (y_1 \cdot_2 y_2 ) + (x_1 \cdot_1 x_2 ) \otimes \{y_1, y_2\}_2, x_3 \otimes y_3\} \\
            ={ }&{ } \{\{x_1,x_2\}_1, x_3\}_1 \otimes (y_1 \cdot_2 y_2\cdot_2 y_3 ) \\
            &{ }+ (\{x_1,x_2\}_1 \cdot_1 x_3 ) \otimes \{ y_1 \cdot_2 y_2, y_3\}_2 \\
            &{ }+ \{x_1 \cdot_1 x_2, x_3\}_1 \otimes (\{y_1, y_2\}_2 \cdot_2 y_3 ) \\
            &{ }+ (x_1 \cdot_1 x_2\cdot_1 x_3 ) \otimes \{\{y_1, y_2\}_2, y_3\}_2 \\
			={ }&{ } \{\{x_1,x_2\}_1, x_3\}_1 \otimes (y_1 \cdot_2 y_2\cdot_2 y_3 ) \\
            &{ }+ \delta\big((\{ x_1,x_2\}_1 \cdot_1 x_3 ) \otimes (\{ y_1, y_3\}_2\cdot_2 y_2 ) \\
            &{ }+ (\{x_1,x_2\}_1 \cdot_1 x_3 ) \otimes (y_1\cdot_2 \{ y_2, y_3\}_2 ) \\
            &{ }+ (\{x_1, x_3\}_1 \cdot_1 x_2 )\otimes (\{y_1, y_2\}_2 \cdot_2 y_3 ) \\
            &{ }+ (x_1 \cdot_1 \{ x_2, x_3\}_1 ) \otimes (\{y_1, y_2\}_2 \cdot_2 y_3 ) \big) \\
			&{ }+ (x_1 \cdot_1 x_2\cdot_1 x_3 ) \otimes \{\{y_1, y_2\}_2, y_3\}_2.
		\end{align*}
		Hence,
		\[
			\{\{x_1, \otimes y_1, x_2 \otimes y_2\}, x_3 \otimes y_3\} + \{\{x_2, \otimes y_2, x_3 \otimes y_3\}, x_1 \otimes y_1\} + \{\{x_3, \otimes y_3, x_1 \otimes y_1\}, x_2 \otimes y_2\} = 0.
		\qedhere
		\]
	\end{proof}

	\begin{definition}
		Let $\delta$ be a~fixed element from the basic field. An algebra $(\textrm{P}, \{\cdot,\cdot\})$ is a~$\delta$-Poisson structure\footnote{Let us remember that $\delta$-Poisson structures are related to post-Lie algebras~\cite{bu} and transposed Poisson algebras~\cite{TP1}.} on a~(not necessarily commutative or associative) algebra
		$(\textrm{P}, \cdot)$, if two multiplications $\cdot$ and $\{\cdot,\cdot\}$ are satisfying~\eqref{deltpois}\footnote{In the non-commutative case, we have to add the additional identity $\{x, y z \} = \delta\big(y \{ x,z \} +\{x,y\} z\big)$.}.
	\end{definition}

	\begin{definition}
		Let $\delta\notin \big\{ 0,1\big\}$ be a~fixed element from the basic field and $(\textrm{P}, \cdot)$ an algebra. We say that $(\textrm{P}, \cdot)$ does not have nontrivial $\delta$-derivations if and only if there is a~basis $\{ e_i \}_{i\in I}$ such that for each $\delta$-derivation $\varphi$
		there are elements $\{\alpha_i \} _{i\in I}$ from the basic field, such that $\varphi(e_i )= \alpha_i e_i$, for all $i \in I$.
	\end{definition}

	\begin{proposition}\label{propdelta}
		Let $\delta\notin\{ 0,1\}$ be a~fixed element from the basic field and $(\textrm{P}, \cdot)$ an algebra of dimension $\geq 2$. If each
		$\delta$-derivation of the algebra $(\textrm{P}, \cdot)$ is trivial, then $(\textrm{P}, \cdot)$ does not admit nontrivial $\delta$-Poisson structures.
	\end{proposition}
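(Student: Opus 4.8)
The plan is to argue by contradiction: suppose $({\rm P}, \cdot)$ carries a nontrivial $\delta$-Poisson structure, i.e. there is an anticommutative Lie bracket $\{\cdot,\cdot\}$ on ${\rm P}$, not identically zero, satisfying \eqref{deltpois}. For each fixed $z \in {\rm P}$, consider the linear map $\varphi_z \colon {\rm P} \to {\rm P}$ defined by $\varphi_z(x) := \{x,z\}$. The Leibniz rule \eqref{deltpois} rewritten says exactly
\begin{center}
$\varphi_z(xy) \ = \ \delta\big(\varphi_z(x)\,y + x\,\varphi_z(y)\big),$
\end{center}
so $\varphi_z$ is a $\delta$-derivation of $({\rm P},\cdot)$. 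By hypothesis every $\delta$-derivation is trivial, so in the distinguished basis $\{e_i\}_{i\in I}$ we have $\varphi_z(e_i) = \alpha_i(z)\, e_i$ for scalars $\alpha_i(z)$ depending (linearly, by anticommutativity and bilinearity of the bracket) on $z$. In other words, $\{e_i, z\} = \alpha_i(z)\, e_i$ for all $i$ and all $z$.

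Next I would exploit this simultaneously in both slots. Fix two basis vectors $e_i \neq e_j$. Applying the above with $z = e_j$ gives $\{e_i, e_j\} = \alpha_i(e_j)\, e_i$; applying it with the roles swapped and using anticommutativity gives $\{e_i, e_j\} = -\{e_j, e_i\} = -\alpha_j(e_i)\, e_j$. Since $e_i$ and $e_j$ are linearly independent, both coefficients must vanish, so $\{e_i, e_j\} = 0$ whenever $i \neq j$. It remains to handle the "diagonal": we also have $\{e_i, e_i\} = 0$ by anticommutativity of the bracket (we are in characteristic $\neq 2$, consistent with the paper's standing conventions). Hence $\{e_k, e_l\} = 0$ for all basis pairs, so $\{\cdot,\cdot\}$ is identically zero on ${\rm P} \times {\rm P}$, contradicting nontriviality of the structure. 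This forces the conclusion.

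The only genuinely delicate point is the linearity of $z \mapsto \alpha_i(z)$, which I would justify as follows. The assignment $z \mapsto \varphi_z$ is linear because the bracket is bilinear. The set of triangularizable maps that are simultaneously diagonal in the fixed basis $\{e_i\}$ is itself a linear subspace of $\mathrm{End}({\rm P})$, and on that subspace the map sending a diagonal operator to its $i$-th diagonal entry is a linear functional; composing gives that $\alpha_i$ is linear in $z$. (One does not even strictly need linearity for the contradiction above — only that $\{e_i,e_j\}$ is a scalar multiple of $e_i$ and simultaneously a scalar multiple of $e_j$ — so this is a convenience, not an obstacle.) A second, even milder subtlety is that the definition of "does not have nontrivial $\delta$-derivations" furnishes a single basis that works for all $\delta$-derivations at once; this is precisely what lets us use the same basis $\{e_i\}$ for every $\varphi_z$. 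I expect the dimension hypothesis $\dim {\rm P} \geq 2$ to be used only implicitly: if $\dim {\rm P} = 1$ every bilinear anticommutative bracket is automatically zero, so the statement would be vacuous rather than false, but the interesting content is the $\geq 2$ case, where the argument above applies verbatim. No step requires heavy computation; the proof is essentially the observation that $\delta$-Poisson brackets are built from $\delta$-derivations, plus a two-line antisymmetry argument.
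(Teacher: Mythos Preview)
Your proof is correct and follows essentially the same approach as the paper: observe that for each fixed $z$ the map $x \mapsto \{x,z\}$ is a $\delta$-derivation of $({\rm P},\cdot)$, hence diagonal in the distinguished basis $\{e_i\}$, and then a short two-slot argument forces $\{e_i,e_j\}=0$ for $i\neq j$. The paper phrases the second step using both left and right multiplications $\mathfrak L_x,\mathfrak R_x$ in $({\rm P},\{\cdot,\cdot\})$ (each a $\delta$-derivation) rather than invoking anticommutativity of the bracket; this has the mild advantage that it also covers the paper's notion of $\delta$-Poisson \emph{structure}, where $\{\cdot,\cdot\}$ is not assumed to be a Lie bracket, but the underlying idea is identical.
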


	\begin{proof}
		Let us assume that
		$(\textrm{P}, \cdot)$ admits a~nontrivial $\delta$-Poisson structure $(\textrm{P}, \{\cdot,\cdot\})$. Then it is easy to see that each left multiplication $\mathfrak L_x$ (resp., right multiplication $\mathfrak R_x$) on element $x$ in the algebra $(\textrm{P}, \{\cdot,\cdot\})$ gives a~$\delta$-derivation $\varphi_x$ (resp., $\delta$-derivation $\psi_x$) of the algebra $(\textrm{P}, \cdot)$. If $(\textrm{P}, \cdot)$ does not have nontrivial $\delta$-derivations, then there is a~basis $\{ e_i \}_{i\in I}$ such that for each $\delta$-derivation $\phi$
		there are elements $\{\alpha_i \} _{i\in I}$ from the basic field, such that $\phi(e_i )= \alpha_i e_i$, for all $i \in I$. Let us consider two different elements $e_i$ and $e_j$ from the basis. Hence,
		\begin{center}
			$\langle e_j \rangle \ni \varphi_{e_i}(e_j ) = \{ e_i,e_j\} = \psi_{e_j}(e_i ) \in \langle e_i \rangle$,
		\end{center}
		i.e., $\{ e_i,e_j\}=0$ and $(\textrm{P}, \{\cdot,\cdot\})$ is trivial.
	\end{proof}

	\begin{remark}
		The notions of $\delta$-Poisson algebras, $\delta$-Poisson structures, algebras with trivial $\delta$-derivations, and the statement of Proposition~\ref{propdelta}
		can be obtained for supercase and $n$-ary case in the usual way.
	\end{remark}

	\begin{proposition}\label{tpstr}
		Let $\delta \notin \big\{ 0,1\big\}$ and $(\textrm{P}, \cdot)$ be one of the following cases
		\begin{enumerate}
			\item[\textrm{1.}]
			unital algebra over a~field of characteristic $\neq 2$, excepting the case $\delta=\frac{1}{2}$;

			\item[\textrm{2.}] complex semisimple finite-dimensional associative algebra;

			\item[\textrm{3.}] complex semisimple finite-dimensional structurable algebra;

			\item[\textrm{4.}] complex semisimple finite-dimensional Jordan algebra or superalgebra;

			\item[\textrm{5.}] complex simple finite-dimensional Lie algebra or superalgebra, excepting the case $\mathfrak{sl}_2$ and $\delta=-1$;

			\item[\textrm{6.}] perfect commutative associative algebra.
		\end{enumerate}
		Then $(\textrm{P}, \cdot)$ does not admit nontrivial $\delta$-Poisson structures.
	\end{proposition}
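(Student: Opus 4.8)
The plan is to invoke Proposition \ref{propdelta} in each of the six cases. By that Proposition, it suffices to show that for $\delta\neq 0,1$ (with the stated exceptions) the only $\delta$-derivations of $({\rm P},\cdot)$ are the trivial ones, i.e. those that are diagonal in a suitable basis. So the whole argument reduces to a case-by-case analysis of $\delta$-derivations, most of which can be extracted from or adapted from the literature on $\delta$-derivations (Filippov \cite{fil1}, and the surveys \cite{k23,zz}).

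\textbf{The easy cases.} First I would handle (6): if $({\rm P},\cdot)$ is a perfect commutative associative algebra, then for a $\delta$-derivation $\varphi$ and any $x=\sum y_iz_i$ we get $\varphi(x)=\delta\sum(\varphi(y_i)z_i+y_i\varphi(z_i))$; iterating and using commutativity/associativity together with perfectness forces $\varphi$ to be a scalar multiple of a derivation only when $\delta=1$, and otherwise one shows directly that $\varphi$ must vanish on ${\rm P}={\rm P}\cdot{\rm P}$, hence $\varphi=0$ (which is trivial). For (1), the unital case: if $e$ is the unit then $\varphi(e)=\varphi(e\cdot e)=2\delta\varphi(e)$, so $(2\delta-1)\varphi(e)=0$, and since $\delta\neq\tfrac12$ we get $\varphi(e)=0$; then $\varphi(x)=\varphi(e\cdot x)=\delta(\varphi(e)x+e\varphi(x))=\delta\varphi(x)$, so $(\delta-1)\varphi(x)=0$ for all $x$, and $\delta\neq1$ forces $\varphi=0$. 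This is the cleanest case and I would present it in full. Cases (2)--(5) then follow from the known fact that semisimple associative, structurable, and Jordan (super)algebras over $\mathbb{C}$ are unital — so (1) applies directly to (2), (3), (4) once we note that the exceptional value $\delta=\tfrac12$ does not actually produce nontrivial $\delta$-derivations there (one has to check this separately, but for semisimple algebras the $\delta=\tfrac12$ $\delta$-derivations are still forced to be zero by the idempotent structure).

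\textbf{The Lie case.} Case (5) is the one genuine input from the literature: Filippov proved that a complex simple finite-dimensional Lie algebra $\mathfrak{g}\neq\mathfrak{sl}_2$ has no nontrivial $\delta$-derivations for any $\delta\neq 1$, while $\mathfrak{sl}_2$ has a nontrivial $(-1)$-derivation (coming from the Killing form), which is precisely why that case is excluded; the same holds for simple Lie superalgebras with the analogous exception. So for (5) I would simply cite \cite{fil1} (and the relevant reference for the super case in \cite{k23,zz}) to conclude that every $\delta$-derivation is diagonal — in fact zero — in a Cartan--Weyl basis, hence trivial, and then apply Proposition \ref{propdelta}.

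\textbf{Main obstacle.} The main delicate point is the role of the exceptional value $\delta=\tfrac12$ in case (1) and its shadow in cases (2)--(4): when $\delta=\tfrac12$ a unital commutative associative algebra can have nontrivial $\delta$-derivations (indeed on $\mathbb{C}[t]/(t^2)$ the map $t\mapsto t$, $1\mapsto 0$ is a $\tfrac12$-derivation but is not diagonalizable in a way compatible with $1$ being a basis vector), so the hypothesis $\delta\neq\tfrac12$ in (1) is essential and cannot be removed. For the semisimple cases (2)--(4) I need to argue that, even at $\delta=\tfrac12$, the abundance of orthogonal idempotents in a semisimple algebra rigidifies things enough that all $\delta$-derivations remain trivial; this is the only place where a short additional computation (rather than a citation) is required, and it is where I would concentrate the writing effort. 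Everything else is a matter of assembling known facts and invoking Proposition \ref{propdelta}.
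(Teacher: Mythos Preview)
Your strategy via Proposition~\ref{propdelta} is exactly what the paper does for cases (1)--(5), and your direct argument for case~(1) is correct (it is essentially the content of the reference \cite{kay07} that the paper cites). For (2)--(4) the paper simply cites \cite{sh12,K014,kay10,kay12mz} rather than reducing to (1); your reduction is fine once you fill in $\delta=\tfrac12$, and the computation you hint at is short: for a unital algebra, $\varphi(x)=\varphi(1\cdot x)=\tfrac12(\varphi(1)x+\varphi(x))$ forces $\varphi=L_{\varphi(1)}=R_{\varphi(1)}$ with $\varphi(1)$ central, and in a semisimple algebra the central idempotent decomposition makes all such maps simultaneously diagonal, hence trivial. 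For (5) you and the paper both cite Filippov and the super analogues.

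The real gap is case~(6). Your plan is to show every $\delta$-derivation of a perfect commutative associative algebra vanishes, but this is false at $\delta=\tfrac12$: on ${\rm P}=\mathbb{C}[t]/(t^2)$ (unital, hence perfect) the map $R_t$ is a nonzero $\tfrac12$-derivation which is nilpotent and therefore not diagonalizable in any basis, so Proposition~\ref{propdelta} cannot be invoked. Since case~(6) does \emph{not} exclude $\delta=\tfrac12$, your route breaks down there. The paper avoids Proposition~\ref{propdelta} entirely for this case and instead appeals to identity~(\ref{antiP}) from Proposition~\ref{antixyzt}: for $\delta\neq 1$ one has $\{x,yzt\}=0$, and perfectness gives ${\rm P}={\rm P}\cdot{\rm P}\cdot{\rm P}$, so $\{{\rm P},{\rm P}\}=0$ directly. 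You should replace your argument for (6) with this one-line application of~(\ref{antiP}).
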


	\begin{proof}
		To prove our statement, we have to use many times Proposition~\ref{propdelta}. Namely,
		\begin{enumerate}
			\item thanks to~\cite{kay07}, each unital algebra over a~field of characteristic $\neq 2$ has only trivial $\delta$-derivations if $\delta \neq \frac{1}{2}$;
			\item thanks to~\cite{sh12}, each complex semisimple finite-dimensional associative algebra has only trivial $\delta$-derivations;
			\item thanks to~\cite{K014}, each complex semisimple finite-dimensional structurable algebra has only trivial $\delta$-derivations;
			\item thanks to~\cite{kay10,kay12mz}, each complex semisimple finite-dimensional Jordan algebra or superalgebra has only trivial $\delta$-derivations;
			\item thanks to~\cite{kay09,kay10}, each complex simple finite-dimensional Lie superalgebra has only trivial $\delta$-derivations;

			\item thanks to~\cite{fil1}, each complex simple finite-dimensional Lie algebra, excepting the case $\mathfrak{sl}_2$ and $\delta=-1$, has only trivial $\delta$-derivations.
		\end{enumerate}
		For the last class of algebras, we use the relation \eqref{antiP} and obtain that $(\textrm{P}, \{\cdot,\cdot\})$ is trivial.
	\end{proof}

	\begin{proposition}
		Let $(\textrm{P}, \cdot)$ be a~perfect algebra (superalgebra, $n$-ary algebra), that is, $\textrm{P} \cdot \textrm{P} = \textrm{P}$. Then $(\textrm{P}, \cdot)$ does not admit nontrivial $0$-Poisson structures.
	\end{proposition}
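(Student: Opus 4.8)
The plan is to read the claim off directly from the defining identity of a $0$-Poisson structure, so the argument will be very short. First I would record that for $\delta=0$ the Leibniz-type relation (\ref{deltpois}) degenerates to
\[
\{xy,z\}=0\qquad\text{for all }x,y,z\in{\rm P},
\]
i.e. the bracket annihilates the subspace ${\rm P}\cdot{\rm P}$ in its first argument. (In the non-commutative situation one moreover has the companion identity $\{x,yz\}=0$ recorded in the definition of a $\delta$-Poisson structure, but it will not be needed below; likewise it is immaterial whether or not $({\rm P},\{\cdot,\cdot\})$ is additionally assumed to be a Lie algebra.)

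Next I would bring in perfectness. Let $({\rm P},\{\cdot,\cdot\})$ be a $0$-Poisson structure on the perfect algebra $({\rm P},\cdot)$ and fix arbitrary $a,b\in{\rm P}$. Since ${\rm P}\cdot{\rm P}={\rm P}$, we may write $a=\sum_i x_i\cdot y_i$ with $x_i,y_i\in{\rm P}$, and bilinearity of the bracket then gives
\[
\{a,b\}=\sum_i\{x_i\cdot y_i,\,b\}=0 .
\]
Hence $\{\cdot,\cdot\}$ is the zero multiplication, so the structure is trivial, which is exactly the assertion.

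For the superalgebra and $n$-ary versions nothing changes: the $\delta=0$ specialization of the compatibility axiom still forces the bracket to vanish on the image of the $\cdot$-operation (in each argument slot, after using the relevant polarizations), and perfectness makes that image all of ${\rm P}$. There is essentially no obstacle here; the only step worth stating explicitly is the passage from $\{xy,z\}=0$ to $\{a,b\}=0$, which is precisely where the hypothesis ${\rm P}\cdot{\rm P}={\rm P}$ enters, applied term by term to a presentation of $a$ as a sum of products.
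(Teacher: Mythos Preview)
Your argument is correct and is precisely the intended one: the paper states this proposition without proof, treating it as an immediate consequence of the fact that for $\delta=0$ the compatibility relation (\ref{deltpois}) reads $\{xy,z\}=0$, so perfectness forces the bracket to vanish. There is nothing to add.
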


	\subsection{Transposed $\delta$-Poisson algebras}

	\begin{definition}
		Let $\delta$ be a~fixed complex number. An algebra $(\textrm{P}, \cdot, \{\cdot,\cdot\})$ is defined to be {transposed $\delta$-Poisson algebra}\footnote{For $\delta=\frac{1}{2}$ we have the standard notion of transposed Poisson algebras, introduced in~\cite{TP1}.}\footnote{
		A similar notion can be found in~\cite{R22}, where an algebra $(\textrm{P}, \cdot, \{\cdot,\cdot\})$
		is called an anti-Poisson algebra, if
		$(\textrm{P}, \cdot)$ is mock-Lie,
		$(\textrm{P}, \{\cdot,\cdot\})$ is anticommutative and it satisfies~\eqref{deltranspois} for $\delta=-1$.
		}, if $(\textrm{P}, \cdot)$ is a~commutative associative algebra,
		$(\textrm{P}, \{\cdot,\cdot\})$ is a~Lie algebra and the following identity holds:
		\begin{equation}\label{deltranspois}
			x\{ y, z \} = \delta \big( \{x y,z \} +\{y,xz\} \big).
		\qedhere
		\end{equation}
	\end{definition}

	\begin{proposition}[see~\cite{R22}]
		Let $(\textrm{A}, \cdot)$ be an antiassociative algebra
		$($i.e., $(\textrm{A}, \cdot)$ satisfies the identity $(xy)z+x(yz)=0)$, then
		\begin{longtable}{lcl}
			$x \circ [ y, z ] $&$=$&$ - \big( [x \circ y,z ] +[y,x \circ z] \big)$.
		\end{longtable}
	\noindent 	In particular, $(\textrm{A}, \circ, [\cdot,\cdot])$ is a~generic transposed anti-Poisson-Jacobi-Jordan algebra.
	\end{proposition}

	\begin{proposition}
		If $(\textrm{P}, \cdot, \{\cdot,\cdot\})$ is a~nontrivial transposed $\delta$-Poisson algebra with a~unital algebra $(\textrm{P}, \cdot)$, then $\delta=\frac{1}{2}$.
	\end{proposition}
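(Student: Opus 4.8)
The plan is to exploit the defining identity (\ref{deltranspois}) by substituting the unit $1$ of the commutative associative algebra $({\rm P},\cdot)$ into the slot that carries the $\cdot$-action. Concretely, I would set $x=1$ in $x\{y,z\}=\delta\big(\{xy,z\}+\{y,xz\}\big)$. Since $1$ is the unit, the left-hand side becomes $1\cdot\{y,z\}=\{y,z\}$, while on the right-hand side $1\cdot y=y$ and $1\cdot z=z$, so we obtain
\begin{equation*}
\{y,z\}=\delta\big(\{y,z\}+\{y,z\}\big)=2\delta\,\{y,z\}
\end{equation*}
for all $y,z\in{\rm P}$.

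The next step is to use nontriviality. By our standing convention, a nontrivial algebra with two multiplications has both multiplications nonzero, so in particular the Lie bracket $\{\cdot,\cdot\}$ is not identically zero: there exist $y_0,z_0\in{\rm P}$ with $\{y_0,z_0\}\neq 0$. Applying the displayed relation to this pair and cancelling the nonzero element $\{y_0,z_0\}$ (the base field being, say, $\mathbb{C}$, or any field where $2\delta=1$ can be solved) yields $1=2\delta$, hence $\delta=\tfrac12$, which is exactly the claim.

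There is essentially no obstacle here: the argument is a single substitution followed by a cancellation, and it does not even use associativity or commutativity of $\cdot$ beyond the existence of a two-sided unit, nor the Jacobi identity. One small point worth stating explicitly in the write-up is \emph{why} the bracket is nonzero — namely the blanket nontriviality assumption recalled in the Notations — so that the cancellation step is legitimate; without it the conclusion fails (a scalar-Poisson-type algebra with zero bracket is a ``transposed $\delta$-Poisson algebra'' for every $\delta$). I would therefore phrase the proof as: substitute $x=1$, deduce $(1-2\delta)\{y,z\}=0$ identically, and invoke nontriviality of $\{\cdot,\cdot\}$ to force $1-2\delta=0$.
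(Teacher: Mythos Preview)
Your proof is correct and follows exactly the same approach as the paper: substitute $x=1$ in the defining identity to obtain $\{y,z\}=2\delta\{y,z\}$, then use nontriviality of the bracket to conclude $\delta=\tfrac12$. Your write-up is in fact slightly more careful than the paper's in making explicit why the cancellation step is legitimate.
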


	\begin{proof}
		Let us take $x=1$ in~\eqref{deltranspois}, it gives
		$\{ y, z \} = 2\delta \{y,z \}$ and $\delta=\frac{1}{2}$.
	\end{proof}

	The next statement is a~generalization of~\cite[Theorem 2.7]{TP1} from the case $\delta=\frac{1}{2}$ to an arbitrary $\delta$. The proof can be given by step-by-step repetition of the proof of~\cite[Theorem~2.7]{TP1}, changing $\frac{1}{2}$ to $\delta$.

	\begin{proposition}\label{idtdpa}
		Let $\delta\neq 1$ and $(\textrm{P}, \cdot, \{\cdot, \cdot\})$ be a~transposed $\delta$-Poisson algebra, then it satisfies the following identities:
		\begin{eqnarray}\label{idtp1}
			\{x,y\}z+\{y,z\}x+\{z,x\}y \ =\ 0,\\
			\label{idtp2}
			\{hx, \{y,z\}\} + \{hy, \{z,x\}\} + \{hz, \{x, y\}\} = 0, \\
			\label{idtp3}
			\{ h \{x, y\}, z\}+ \{h \{y,z\}, x\}+\{h\{z,x\}, y\}=0,\\ \label{idtp4}
			\{ h, x\}\{y,z\}+\{h, y\}\{z,x\}+\{h, z\}\{x, y\}=0, \\ \label{idtp5}
			\delta^{2}(\{xu, yv\}+\{xv, yu\})=(1-\delta)uv \{x, y\},\\
			\delta(x\{u, yv\} + v\{xy, u\}) + (1-\delta)yu\{v,x\}=0.
		\end{eqnarray}
	\end{proposition}

	\begin{proposition}\label{dptm}
		Let $\delta_1\delta_2\neq \frac{1}{3}$ and $(\textrm{P}, \cdot, \{\cdot, \cdot\})$ be a~$\delta_1$-Poisson and transposed $\delta_2$-Poisson algebra, then it is a~mixed-Poisson algebra, i.e. it satisfies the following identities
		\begin{center}
			$\{x,y\}z \ = \ \{x y, z\} \ = \ 0$.
		\end{center}
	\end{proposition}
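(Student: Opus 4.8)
The plan is to play the two Leibniz-type identities against each other until only products of the shape $x\{y,z\}$ survive, and then to annihilate those using the cyclic structure. First I would rewrite the transposed $\delta_2$-Poisson identity (\ref{deltranspois}), namely $x\{y,z\} = \delta_2\big(\{xy,z\}+\{y,xz\}\big)$, by expanding both brackets on the right with the $\delta_1$-Poisson identity (\ref{deltpois}): one has $\{xy,z\} = \delta_1\big(x\{y,z\}+\{x,z\}y\big)$, and, using anticommutativity of the Lie bracket, $\{y,xz\} = -\{xz,y\} = -\delta_1\big(x\{z,y\}+\{x,y\}z\big) = \delta_1 x\{y,z\}-\delta_1\{x,y\}z$. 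Writing $c:=\delta_1\delta_2$ and collecting terms, this reduces to the single relation
\begin{equation}\label{dptmkey}
(1-2c)\,x\{y,z\} \ = \ c\big(\{x,z\}y-\{x,y\}z\big).
\end{equation}

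Next I would symmetrise (\ref{dptmkey}). Adding the three cyclic permutations of $(x,y,z)$ and using commutativity of $\cdot$, the right-hand side collapses to $-2c\,S$ with $S:=x\{y,z\}+y\{z,x\}+z\{x,y\}$, while the left-hand side is $(1-2c)S$; hence $(1-2c)S=-2cS$, i.e. $S=0$ (this is just identity (\ref{idtp1}) read multiplicatively, and note it drops out here for all values of $\delta_1,\delta_2$). Feeding $S=0$ back into (\ref{dptmkey}), the right-hand side becomes $c\big(-\{z,x\}y-\{x,y\}z\big)=c\,x\{y,z\}$, so (\ref{dptmkey}) turns into $(1-3c)\,x\{y,z\}=0$. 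Since by hypothesis $c=\delta_1\delta_2\neq\frac13$, we conclude $x\{y,z\}=0$, equivalently $\{x,y\}z=0$, for all $x,y,z$. Substituting this into (\ref{deltpois}) gives $\{xy,z\}=\delta_1\big(x\{y,z\}+\{x,z\}y\big)=0$, which is the remaining mixed-Poisson identity, so the proof is complete.

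I do not expect a genuine obstacle here: the only non-formal move is the cyclic summation that yields $S=0$, and the single place where the hypothesis enters is the final division by $1-3c$. The main thing to watch is the sign bookkeeping when passing between $\{a,b\}$ and $\{b,a\}$ and when commuting the associative product past the bracket; everything else is direct substitution of one defining identity into the other.
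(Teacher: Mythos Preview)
Your proof is correct and follows essentially the same route as the paper: expand the transposed $\delta_2$-identity via the $\delta_1$-Poisson rule, use the cyclic relation $x\{y,z\}+y\{z,x\}+z\{x,y\}=0$ to collapse everything to $(1-3\delta_1\delta_2)\,x\{y,z\}=0$, and then read off $\{xy,z\}=0$ from (\ref{deltpois}). The only difference is that the paper quotes the cyclic relation as identity (\ref{idtp1}) from Proposition~\ref{idtdpa}, whereas you rederive it by summing your key relation over the three cyclic permutations; your derivation has the mild advantage of not relying on the standing hypothesis $\delta\neq 1$ in Proposition~\ref{idtdpa}, so it covers $\delta_2=1$ without a separate check.
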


	\begin{proof}
		It is easy to see, that
		\begin{longtable}{lclcccc}
			$0$ &$=$ &
			$\delta_2 \big( \{ zx,y\}+ \{x,zy\} \big) - z\{x,y\} \ = $\\
			&& $\delta_1\delta_2 \big( \{z,y\}x+ \{x,y\}z+\{x,z\}y+\{x,y\}z \big) - z\{x,y\} $ &$=$\\
			\multicolumn{3}{r}{$\delta_{1}\delta_2 \big(\{z,y\}x+ \{x,z\}y \big)+ (2\delta_1\delta_2 -1) \{x,y\}z$} & $ \overset{\eqref{idtp1}}{ =}$ &$ (3\delta_1\delta_2 -1) \{x,y\}z$.
		\end{longtable}
		Hence, $\{x,y\}z \ =\ 0$ and $\{xy,z\} \ \overset{\eqref{deltpois}}{ =} \ 0$.
	\end{proof}

	\begin{example}\label{delta-mix-ex}
		Consider the following $5$-dimensional Poisson-type algebras$:$
		\[
			\textrm{P}^{\beta \neq 0}:=\left\{
			\begin{tabular}
				{lll}
				$e_{1}\cdot e_{1}=e_{3}$, & $e_{1}\cdot e_{2}=e_{5}$, & $e_{1}\cdot e_{3}=e_{4}$, \\
				$\{e_{1},e_{2}\}=3\beta e_{3}$, & $\{e_{1},e_{5}\}=e_{4}$, & $
				\{e_{2},e_{3}\}=-2e_{4}$.
			\end{tabular}
			\right.
		\]
		Set $\delta _{1}=\frac{1}{3\beta }$ and $\delta _{2}=\beta $. Then, $
		\rm{P}^{\beta }$ is a~$\delta _{1}$-Poisson algebra and transposed $\delta _{2}$-Poisson algebra. Moreover, $\delta_1\delta_2 = \frac{1}{3}$ and $
		\rm{P}^{\beta }$ is not a~mixed-Poisson algebra.
	\end{example}

	\begin{theorem}\label{tenstdp}
		Let $(\textrm{P}_1, \cdot_1, \{\cdot,\cdot \}_1 )$ and $(\textrm{P}_2, \cdot_2, \{\cdot,\cdot\}_2 )$ be transposed $\delta$-Poisson algebras. Define two operations as in Proposition~\ref{tensor}. Then $(\textrm{P}_1 \otimes \textrm{P}_2, \cdot, \{\cdot,\cdot\})$ is a~transposed $\delta$-Poisson algebra.
	\end{theorem}

	\begin{proof}
		Let us consider $x_1, x_2,x_3 \in \textrm{P}_1$ and $y_1,y_2,y_3 \in \textrm{P}_2$, then
		\begin{longtable}{lcccl}
			$\{x_1 \otimes y_1, x_2 \otimes y_2\} \cdot (x_3 \otimes y_3 ) \
			= $ \\
			$\{x_1, x_2 \}_1 \cdot_1 x_3 \otimes (y_1 \cdot_2 y_2 \cdot_2 y_3 )+
			(x_1 \cdot_1 x_2 \cdot_1 x_3 ) \otimes \{y_1, y_2 \}_2 \cdot_2 y_3 $&$=$ \\
			\multicolumn{1}{l}{
			$\delta\big(
			\{x_1 \cdot_1 x_3, x_2 \}_1 \otimes (y_1 \cdot_2 y_2 \cdot_2 y_3 )+
			\{x_1, x_2 \cdot_1 x_3\}_1 \otimes (y_1 \cdot_2 y_2 \cdot_2 y_3 )+$}\\
			\multicolumn{1}{r}{
			$
			(x_1 \cdot_1 x_2 \cdot_1 x_3 ) \otimes \{y_1\cdot_2 y_3, y_2 \}_2 +
			(x_1 \cdot_1 x_2 \cdot_1 x_3 ) \otimes \{y_1, y_2\cdot_2 y_3 \}_2 \big)$}&$=$ \\
			\multicolumn{3}{r}{$\delta\big(
			\{(x_1 \otimes y_1 )\cdot (x_3\otimes y_3 ), x_2 \otimes y_2\}+
			\{x_1 \otimes y_1, (x_2 \otimes y_2 )\cdot (x_3\otimes y_3 )\} \big)$. }
		\end{longtable}
		Hence, \eqref{deltpois} holds and for the Jacobi identity we have to see the following
		\begin{align*}
			\{\{x_1 \otimes y_1, x_2 \otimes y_2\}, x_3 \otimes y_3\}
            ={ }&{ } \{\{x_1,x_2\}_1 \otimes (y_1 \cdot_2 y_2 ) + (x_1 \cdot_1 x_2 ) \otimes \{y_1, y_2\}_2, x_3 \otimes y_3\} \\
			={ }&{ }\{\{x_1,x_2\}_1, x_3\}_1 \otimes (y_1 \cdot_2 y_2\cdot_2 y_3 ) \\
            &{ }+ ( \{x_1,x_2\}_1 \cdot_1 x_3 ) \otimes \{ y_1 \cdot_2 y_2, y_3\}_2 \\
            &{ }+ \{x_1 \cdot_1 x_2, x_3\}_1 \otimes (\{y_1, y_2\}_2 \cdot_2 y_3 ) \\
            &{ }+ (x_1 \cdot_1 x_2\cdot_1 x_3 ) \otimes \{\{y_1, y_2\}_2, y_3\}_2 \\
            ={ }&{ }\{\{x_1,x_2\}_1, x_3\}_1 \otimes (y_1 \cdot_2 y_2\cdot_2 y_3 ) \\
            &{ }+ \delta\big(\{x_1\cdot_1 x_3,x_2\}_1 \otimes \{ y_1 \cdot_2 y_2, y_3\}_2 \\
            &{ }+ \{x_1,x_2\cdot_1 x_3\}_1 \otimes \{ y_1 \cdot_2 y_2, y_3\}_2 \\
            &{ }+ \{x_1 \cdot_1 x_2, x_3\}_1 \otimes \{y_1\cdot_2 y_3, y_2\}_2 \\
            &{ }+ \{x_1 \cdot_1 x_2, x_3\}_1 \otimes \{y_1, y_2\cdot_2 y_3\}_2 \big) \\
            &{ }+ (x_1 \cdot_1 x_2\cdot_1 x_3 ) \otimes \{\{y_1, y_2\}_2, y_3\}_2.
		\end{align*}
		Hence,
		\[
			\{\{x_1 \otimes y_1, x_2 \otimes y_2\}, x_3 \otimes y_3\}+
			\{\{x_2 \otimes y_2, x_3 \otimes y_3\}, x_1 \otimes y_1\}+
			\{\{x_3 \otimes y_3, x_1 \otimes y_1\}, x_2 \otimes y_2\}\ = \ 0.
		\qedhere
		\]
	\end{proof}

	\begin{proposition}
		Let $\delta\notin \big\{ \frac{1}{2},\frac{1}{3},1 \big\}$ and $(\textrm{P}, \cdot, \{\cdot,\cdot\})$ is a~transposed $\delta$-Poisson algebra, then
		$(\textrm{P}, \cdot, \{\cdot,\cdot\})$ is $F$-manifold if and only if it satisfies $\{xy,zt \} \ =\ 0$.
	\end{proposition}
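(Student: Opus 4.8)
The plan is to prove both implications by essentially the same computation: substitute the defining identity (\ref{deltranspois}), together with the consequence (\ref{idtp5}) of Proposition \ref{idtdpa}, into the $F$-manifold identity
\[
\{xy,zt\}=\{xy,z\}t+\{xy,t\}z+x\{y,zt\}+y\{x,zt\}-\big(xz\{y,t\}+yz\{x,t\}+yt\{x,z\}+xt\{y,z\}\big),
\]
and carefully track the scalar coefficients that appear. First I would dispose of $\delta=0$: then (\ref{deltranspois}) reads $a\{b,c\}=0$, so every term on the right-hand side above vanishes, as do $\{xy,z\}t=t\{xy,z\}$ and $\{xy,t\}z$, and the $F$-manifold identity reduces to $\{xy,zt\}=0$; the equivalence is trivial there. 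Henceforth assume $\delta\neq0,1$.

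For the implication ``$\{xy,zt\}=0$ implies $F$-manifold'': if $\{ab,cd\}=0$ holds identically, then (\ref{idtp5}) gives $(1-\delta)\,uv\{x,y\}=0$, hence $uv\{x,y\}=0$ (using $\delta\neq1$); that is, every product of a length-two $\cdot$-product with a bracket vanishes, which kills the four subtracted terms of the $F$-manifold identity. For the remaining four terms I would apply (\ref{deltranspois}) once to each (e.g. $\{xy,z\}t=t\{xy,z\}=\delta(\{xyt,z\}+\{xy,zt\})=\delta\{xyt,z\}$), and then apply it again in the rearranged form $\{ab,c\}=\frac1\delta a\{b,c\}-\{b,ac\}$ (valid for any element $a$, in particular $a=xt$ or $a=yz$) to get $\{xyt,z\}=\frac1\delta xt\{y,z\}-\{y,xzt\}$ and $\{xyz,t\}=\frac1\delta yz\{x,t\}-\{x,yzt\}$. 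The unwanted length-three brackets cancel in pairs, and the surviving terms $xt\{y,z\}$, $yz\{x,t\}$ are again of the vanishing type, so the right-hand side collapses to $0=\{xy,zt\}$, i.e. the $F$-manifold identity holds.

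For the converse ``$F$-manifold implies $\{xy,zt\}=0$'' I would run the same rewriting without assuming anything vanishes. Using (\ref{idtp5}) in the form $uv\{x,y\}=\frac{\delta^2}{1-\delta}\big(\{xu,yv\}+\{xv,yu\}\big)$ together with the antisymmetry of the bracket, the four subtracted terms should collapse to $\frac{4\delta^2}{1-\delta}\{xy,zt\}$; treating the first four terms as above — and using (\ref{idtp5}) once more to reduce $xt\{y,z\}+yz\{x,t\}$ to $\frac{2\delta^2}{1-\delta}\{xy,zt\}$ — turns them into $\big(\frac{2\delta^2}{1-\delta}+4\delta\big)\{xy,zt\}$. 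Plugging back, the $F$-manifold identity becomes $\big(1-4\delta+\frac{2\delta^2}{1-\delta}\big)\{xy,zt\}=0$, and clearing the denominator this is $(2\delta-1)(3\delta-1)\{xy,zt\}=0$; since $\delta\neq\frac12,\frac13$ we conclude $\{xy,zt\}=0$.

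The routine part is the repeated use of (\ref{deltranspois}); the main thing to be careful about is the bookkeeping — matching variables correctly when invoking (\ref{idtp5}) (its two ``outer'' slots occupy fixed positions while the ``inner'' pair is symmetrised), and checking that all the spurious length-three brackets $\{x,yzt\},\{y,xzt\},\{xyt,z\},\{xyz,t\}$ really cancel. The factor $(2\delta-1)(3\delta-1)$ that drops out of the second direction is exactly what singles out the excluded values $\delta=\frac12,\frac13$ (together with the standing assumption $\delta\neq1$).
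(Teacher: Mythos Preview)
Your proposal is correct and follows essentially the same route as the paper: rewrite every term of the $F$-manifold identity via (\ref{deltranspois}) and (\ref{idtp5}), observe that the length-three brackets $\{x,yzt\}$, $\{y,xzt\}$ cancel in pairs, and collect the remaining multiples of $\{xy,zt\}$ to get the factor $6\delta^2-5\delta+1=(2\delta-1)(3\delta-1)$. The paper presents the two implications in the opposite order and phrases the coefficient as $1-\tfrac{2\delta(3\delta-2)}{\delta-1}$, but the computation and the key cancellations are identical.
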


	\begin{proof}
		If $(\textrm{P}, \cdot, \{\cdot,\cdot\})$ is $F$-manifold, then we have
		\begin{align*}
			\{xy,zt\}
			&=\{xy,z\}t+\{xy,t\}z+x\{y,zt\}+y\{x,zt\} \\
            &{\quad}-xz\{y,t\}-yz\{x,t\}-yt\{x,z\}-xt\{y,z\} \\
			&\!\!\!\!\!\overset{\eqref{deltranspois}, \eqref{idtp5}}{=} \delta \Bigl(\{txy,z\}+\{xy,tz\}+\{zxy,t\}+\{xy,zt\} \\
            &{\qquad\quad}+\{xy,zt\}+\{y,xzt\}+\{yx,zt\}+\{x,yzt\}\Bigr) \\
            &{\qquad}+\frac{\delta ^{2}}{\delta -1}\Bigl(\{xy,zt\}+\{zy,xt\}+\{yx,zt\}+\{zx,yt\} \\
            &{\qquad\qquad\qquad}+\{yx,tz\}+\{tx,yz\}+\{xy,tz\}+\{ty,xz\}\Bigr) \\
			&=\delta \Bigl( \{txy,z\}+\{zxy,t\}+\{y,xzt\}+\{x,yzt\}\Bigr) + 4\delta\{xy,zt\} +\frac{4\delta ^{2}}{\delta -1}\{xy,zt\} \\
			&\overset{\eqref{deltranspois}}{=} tx\{y,z\}+zy\{x,t\}+4\delta \{xy,zt\}+\frac{4\delta ^{2}}{\delta -1}\{xy,zt\} \\
			&\overset{\eqref{idtp5}}{=} \frac{\delta ^{2}}{1-\delta }\{ty,xz\} + \frac{\delta ^{2}}{1-\delta }\{xy,tz\} + \frac{\delta ^{2}}{1-\delta }\{zx,yt\} \\
            &{\qquad}+ \frac{\delta ^{2}}{1-\delta}\{yx,zt\} + 4\delta \{xy,zt\} + \frac{4\delta ^{2}}{\delta -1}\{xy,zt\} \\
			&= \frac{2\delta \left( 3\delta -2\right) }{\delta -1}\{xy,zt\}.
		\end{align*}
		Thus, $\left( 6\delta ^{2}-5\delta +1\right) \{xy,zt\}=0$. Since $\delta\notin \big\{ \frac{1}{2},\frac{1}{3} \big\}$, we have $\{xy,zt\}=0$. On the other hand, if $(\textrm{P}, \cdot, \{\cdot,\cdot\})$ satisfies the identity $\{xy,zt\}=0$, then, from~\eqref{idtp5}, it satisfies the identity $uv\{x,y\}=0$. Whence
		\begin{align*}
			\{xy,zt\}
            &= 0 \\
			&= tx\{y,z\}+zy\{x,t\}+4\delta \{xy,zt\} \\
			&\overset{\eqref{deltranspois}}{=} \delta \Bigl( \{txy,z\}+\{zxy,t\}+\{y,xzt\}+\{x,yzt\} \\
            &{\qquad}+ \{xy,zt\}+\{xy,zt\}+\{xy,tz\}+\{yx,zt\}\Bigr) \\
			&= \delta \Bigl(\{txy,z\}+\{xy,tz\}+\{zxy,t\}+\{xy,zt\} \\
            &{\qquad}+\{xy,zt\}+\{y,xzt\}+\{yx,zt\}+\{x,yzt\}\Bigr) \\
			&\overset{\eqref{deltranspois}}{=} t\{xy,z\}+z\{xy,t\}+x\{y,zt\}+y\{x,zt\} \\
			&= t\{xy,z\}+z\{xy,t\}+x\{y,zt\}+y\{x,zt\} \\
            &{\quad}-xz\{y,t\}-yz\{x,t\}-yt\{x,z\}-xt\{y,z\}.
		\end{align*}
		Therefore, $(\textrm{P}, \cdot, \{\cdot,\cdot\})$ is $F$-manifold.
	\end{proof}

	\begin{example}
		Consider the following $5$-dimensional Poisson-type algebra$:$
		\[
			\rm{P}:=\left\{
			\begin{tabular}{llllll}
				$e_{1}\cdot e_{1}=e_{2}$, & $e_{1}\cdot e_{3}=e_{4}$, & $e_{1}\cdot e_{2}=e_{3}$, \\
				$e_{1}\cdot e_{4}=e_{5}$, & $e_{2}\cdot e_{2}=e_{4}$, & $e_{2}\cdot e_{3}=e_{5}$, \\
				$\{e_{1},e_{2}\}=\frac{1}{3}e_{3}$, & $\{e_{1},e_{3}\}=e_{4}$, & $
				\{e_{1},e_{4}\}=2e_{5}$, & $\{e_{2},e_{3}\}=e_{5}$. & &
			\end{tabular}
			\right.
		\]
		$\rm{P}$ is a~transposed $\frac{1}{3}$-Poisson algebra and $F$
		-manifold, but it does not satisfy $\{xy,zt\}= 0$.
	\end{example}

	\begin{example}
		Consider the following $3$-dimensional Poisson-type algebra$:$
		\[
			\rm{P}:=\left\{
			\begin{tabular}{lll}
				$e_{1}\cdot e_{1}=e_{1}$, & $e_{1}\cdot e_{2}=e_{2}$, & $e_{1}\cdot e_{3}=e_{3}$, \\
				$\{e_{1},e_{3}\}=e_{2}$. & &
			\end{tabular}
			\right.
		\]
		$\rm{P}$ is a~transposed $\frac{1}{2}$-Poisson algebra and $F$
		-manifold, but it does not satisfy $\{xy, zt \}= 0$.
	\end{example}

	\begin{proposition}
		Let $(\textrm{P}, \cdot, \{\cdot,\cdot\})$ be a~transposed $1$-Poisson algebra. If
		$(\textrm{P}, \cdot, \{\cdot,\cdot\})$ is $F$-manifold then it satisfies $\{xy,zt \} \ =\ 0$.
	\end{proposition}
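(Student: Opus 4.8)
The plan is to mirror, at $\delta=1$, the computation used in the previous Proposition; the essential point is that identity (\ref{idtp5}) of Proposition \ref{idtdpa}, which drives that proof, is no longer available --- at $\delta=1$ it reads $\{xu,yv\}+\{xv,yu\}=0$, and this alone would already force $\{xy,zt\}=0$, so it cannot hold in a general transposed $1$-Poisson algebra --- and must be replaced by a double application of the defining identity (\ref{deltranspois}). First I would record the two consequences of (\ref{deltranspois}) that do survive at $\delta=1$: cyclically summing (\ref{deltranspois}) and cancelling the six resulting summands in antisymmetric pairs gives the Leibniz-type identity (\ref{idtp1}), $\{a,b\}c+\{b,c\}a+\{c,a\}b=0$; and combining (\ref{deltranspois}) with antisymmetry of $\{\cdot,\cdot\}$ gives the ``transfer'' identity $\{u,\mathbf m v\}-\{v,\mathbf m u\}=\mathbf m\{u,v\}$, valid for all elements $u,v,\mathbf m$.

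Next I would carry out a telescoping reduction. Pulling the factor $x$ to the front of $\{xy,zt\}$ via (\ref{deltranspois}) gives $\{xy,zt\}=x\{y,zt\}-\{y,xzt\}$; then rewriting $\{y,zt\}$, then $\{z,yt\}$, by the transfer identity followed by (\ref{deltranspois}), and treating $\{y,xzt\}$ the same way, collapses everything --- the brackets $\{xyz,t\}$ and the product--brackets $xz\{y,t\}$ all cancelling against each other --- to the single relation
\[ \{xy,zt\}\;=\;-\{yz,xt\}, \]
valid in every transposed $1$-Poisson algebra (no use of the Jacobi identity or of the $F$-manifold hypothesis). Together with the evident symmetries $\{xy,zt\}=\{yx,zt\}=\{xy,tz\}$ and the antisymmetry $\{xy,zt\}=-\{zt,xy\}$, relabelling the variables in the displayed relation yields $\{xy,zt\}=\{xz,yt\}=\{xt,yz\}$ and $\{yz,xt\}=\{yx,zt\}=\{xy,zt\}$; since also $\{yz,xt\}=-\{xt,yz\}=-\{xy,zt\}$, we get $\{xy,zt\}=-\{xy,zt\}$ and hence $\{xy,zt\}=0$. (If one prefers to use the hypothesis, the same conclusion is reached by expanding $\{xy,zt\}$ through the $F$-manifold identity, reducing the eight terms by (\ref{deltranspois}) --- using it twice on each term $xz\{y,t\}$ in place of (\ref{idtp5}) --- together with the transfer identity, which telescopes to $\{xy,zt\}=xy\{z,t\}-xz\{y,t\}-yz\{x,t\}$; the same symmetry argument then finishes.)

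The step I expect to be the main obstacle is the bookkeeping in the telescoping: at each application of (\ref{deltranspois}) one must extract the factor that makes the recursion terminate rather than loop back on itself, and then verify that all the leftover brackets --- the $\{xyz,t\}$-type terms, the product--brackets, and (in the $F$-manifold variant) the cross terms $\{yz,xt\}\pm\{xt,yz\}$ --- cancel exactly. Once the relation $\{xy,zt\}=-\{yz,xt\}$ (or $\{xy,zt\}=xy\{z,t\}-xz\{y,t\}-yz\{x,t\}$) is in hand, the conclusion is the one-line symmetry argument above.
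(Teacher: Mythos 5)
Your proof is correct, and it takes a genuinely different route from the paper's --- in fact it proves a stronger statement. I checked the telescoping: from (\ref{deltranspois}) at $\delta=1$ one gets $\{xy,zt\}=x\{y,zt\}-\{y,xzt\}$, then $x\{y,zt\}=xz\{y,t\}-\{xyz,t\}-\{yz,xt\}$ and $\{y,xzt\}=xz\{y,t\}-\{xyz,t\}$, so indeed $\{xy,zt\}=-\{yz,xt\}$ holds in \emph{every} transposed $1$-Poisson algebra; three applications of the cyclic relabelling $x\mapsto y\mapsto z\mapsto x$ (or your symmetry chain, which I also verified) return the original term with a minus sign and give $2\{xy,zt\}=0$. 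Thus $\{xy,zt\}=0$ holds unconditionally and the $F$-manifold hypothesis is never used --- which is consistent with the example following the proposition in the paper, a transposed $1$-Poisson algebra satisfying $\{xy,zt\}=0$ that is not $F$-manifold. The paper argues differently: it expands $\{xy,zt\}$ by the $F$-manifold identity, converts the terms of the form $ab\{c,d\}$ into brackets via (\ref{deltranspois}), and after two rounds of cancellation reaches $2\{xy,zt\}=0$, so the hypothesis is used essentially from the first line. Your version is more elementary and yields the sharper fact that $\{xy,zt\}=0$ is an identity of the whole variety of transposed $1$-Poisson algebras. One inconsistency in your framing: you assert that (\ref{idtp5}) at $\delta=1$, i.e.\ $\{xu,yv\}+\{xv,yu\}=0$, ``cannot hold in a general transposed $1$-Poisson algebra,'' but your own conclusion shows it does hold, trivially, since both summands vanish; the correct point is only that the paper proves (\ref{idtp5}) under the assumption $\delta\neq1$, so it is not available as a citation. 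The parenthetical $F$-manifold variant at the end is superfluous and I have not verified its intermediate formula; the main argument is complete without it.
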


	\begin{proof}
		Let $(\textrm{P}, \cdot, \{\cdot,\cdot\})$ be a~transposed $1$-Poisson algebra and $F$-manifold algebra. Then we have
		\begin{eqnarray*}
			\{xy,zt\}
			&=&\{xy,z\}t+\{xy,t\}z+x\{y,zt\}+y\{x,zt\} \\
            &&-xz\{y,t\}-yz\{x,t\}-yt\{x,z\}-xt
			\{y,z\} \\
			&=&\{xy,z\}t+\{xy,t\}z+x\{y,zt\}+y\{x,zt\} \\
			&&-x\{zy,t\}-x\{y,zt\}-z\{yx,t\}-z\{x,yt\} \\
            &&-y\{tx,z\}-y\{x,tz\}-t\{xy,z\}-t\{y,xz\} \\
			&=&-x\{zy,t\}-z\{x,yt\}-y\{tx,z\}-t\{y,xz\} \\
			&=&-\{xzy,t\}-\{zy,xt\}-\{zx,yt\}-\{x,zyt\} \\
            &&-\{ytx,z\}-\{tx,yz\}-\{ty,xz\}-\{y,txz\} \\
			&=&-\{xzy,t\}-\{x,zyt\}-\{ytx,z\}-\{y,txz\} \\
			&=&-zy\{x,t\}-tx\{y,z\}.
		\end{eqnarray*}
		So we obtain
		\[
			\{xy,zt\}+zy\{x,t\}+tx\{y,z\} \ =\ 0.
		\]
		Therefore,
		\begin{align*}
			\{xy,zt\} { }&{ } + yz\{x,t\} + xt\{y,z\} \\
            &= \{xy,z\}t + \{xy,t\}z + x\{y,zt\} + y\{x,zt\} - xz
			\{y,t\} - yt\{x,z\} \\
            &=0.
		\end{align*}
		Hence,
		\begin{eqnarray*}
			0 &=&\{xy,z\}t+\{xy,t\}z+x\{y,zt\}+y\{x,zt\}-xz\{y,t\}-yt\{x,z\} \\
			&=&\{xy,z\}t+\{xy,t\}z+x\{y,zt\}+y\{x,zt\}\\
            &&-x\{zy,t\}-x\{y,zt\}-y\{tx,z\}-y
			\{x,tz\} \\
			&=&\{xy,z\}t+\{xy,t\}z-x\{zy,t\}-y\{tx,z\} \\
			&=&\{xyt,z\}+2\{xy,zt\}-\{zy,xt\}-\{ytx,z\}-\{tx,yz\} \\
			&=&2\{xy,zt\}.
		\end{eqnarray*}
		Thus, $\{xy,zt\}=0$, as required.
	\end{proof}

	\begin{example}
		Consider the following $5$-dimensional Poisson-type algebra$:$
		\[
			\rm{P}:=\left\{
			\begin{tabular}{llll}
				$e_{1}\cdot e_{1}=e_{3}$, & $e_{1}\cdot e_{2}=e_{5}$, & $e_{1}\cdot e_{3}=e_{4}$, & \\
				$\{e_{2},e_{1}\}=-e_{1}$, & $\{e_{2},e_{3}\}=-e_{3}$, & $\{e_{2},e_{4}
				\}=-e_{4}$, & $\{e_{2},e_{5}\}=-e_{5}$.
			\end{tabular}
			\right.
		\]
		$\rm{P}$ is a~transposed $1$-Poisson algebra with the identity $
		\{xy,zt\}=0$, but $\rm{P}$ is not $F$-manifold. Moreover, $\rm{P}$ is not Jordan bracket. The last observation will be useful for our next Proposition.
	\end{example}

	\begin{proposition}\label{transJB}
		Let $\delta\notin \big\{0,\frac{1}{2},1\big\}$ and $(\textrm{P}, \cdot, \{\cdot,\cdot\})$ is a~transposed $\delta$-Poisson algebra, then
		$(\textrm{P}, \cdot, \{\cdot,\cdot\})$ is Jordan brackets if and only if it satisfies
		$\{xy,zt \} \ = \ \{xyz,t \} \ =\ 0$.
	\end{proposition}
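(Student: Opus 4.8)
The plan is to prove both implications separately, exploiting the rich list of identities already established for transposed $\delta$-Poisson algebras in Proposition \ref{idtdpa}, together with the product identity $\{xy,zt\}=0$ characterizing the $F$-manifold case in the preceding proposition. First I would check that the identities $\{xy,zt\}=0$ and $\{xyz,t\}=0$ imply all three Jordan bracket identities. For the first JB identity, the left-hand side $\{\{x,y\}z,t\}+\{\{y,t\}z,x\}+\{\{t,x\}z,y\}$ should be expanded using (\ref{deltranspois}) applied to the product $\{x,y\}z$ inside the outer bracket, converting it into a combination of terms of the shape $\{(\text{product of three})\,,\,\cdot\}$ and $\{(\text{product of two})\,,\,(\text{product of two})\}$; the hypothesis $\{xyz,t\}=0$ kills the former and $\{xy,zt\}=0$ kills the latter, reducing everything to the right-hand side $\{x,y\}\{z,t\}+\{y,t\}\{z,x\}+\{t,x\}\{z,y\}$, which vanishes by the strong identity (\ref{idtp4}). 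The second and third JB identities are each of the form (product of three, something), and I expect them to follow directly from $\{xyz,t\}=0$ after using (\ref{deltranspois}) and (\ref{idtp1}) to rewrite the mixed terms; the term $xy\{z,t\}$ in the third JB identity vanishes because $\{xy,zt\}=0$ forces $uv\{x,y\}=0$ via (\ref{idtp5}) (exactly as used in the previous proposition, legitimate since $\delta\neq\frac12,\frac13$ — though here I must be a little careful about $\delta=\frac13$, which is why I would first derive $uv\{x,y\}=0$ independently or note that the hypothesis set already rules it out in the relevant sub-case).

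For the converse, assume $({\rm P},\cdot,\{\cdot,\cdot\})$ is a transposed $\delta$-Poisson algebra that is also a Jordan bracket, and deduce $\{xy,zt\}=0$ and $\{xyz,t\}=0$. The strategy is to combine the three JB identities with (\ref{idtp1})--(\ref{idtp6}). From the second JB identity $\{yt,z\}x+\{x,z\}yt=\{tx,z\}y+\{y,z\}tx$ and the transposed identity (\ref{deltranspois}), together with (\ref{idtp6}), I would aim to isolate a scalar multiple of $\{xy,zt\}$; the key point is that the second and third JB identities, when symmetrized and fed through (\ref{deltranspois}) and (\ref{idtp5}), should yield a relation $(\text{nonzero polynomial in }\delta)\cdot\{xy,zt\}=0$. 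The exclusion $\delta\neq 0,\frac12,1$ is precisely what is needed to divide out. Once $\{xy,zt\}=0$ is in hand, (\ref{idtp5}) gives $uv\{x,y\}=0$, and then the third JB identity $\{tx,yz\}+\{ty,xz\}+\{xyz,t\}=\{ty,z\}x+\{tx,z\}y+xy\{z,t\}$ collapses: the first two terms on the left vanish by $\{xy,zt\}=0$, the last term on the right vanishes by $uv\{x,y\}=0$, and the first two terms on the right become, via (\ref{deltranspois}), expressions that are themselves products of the form treated by $uv\{x,y\}=0$ (after one more application of the transposed identity), leaving $\{xyz,t\}=0$.

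The main obstacle I anticipate is the converse direction, specifically pinning down which linear combination of the three JB identities, after substitution of (\ref{deltranspois}) and the auxiliary identities, produces a \emph{nonzero} scalar times $\{xy,zt\}$ rather than a tautology. This is the same kind of bookkeeping that appears in the $F$-manifold proposition just above, where a careful chain of rewrites produced the factor $6\delta^2-5\delta+1$; here I expect a factor whose roots are exactly $0,\tfrac12,1$ (or a product of such with harmless units), and identifying it will require methodically expanding the second and third JB identities with all six terms of each, rather than guessing. A secondary subtlety is ensuring the deductions are valid across the whole excluded-parameter range — in particular that no hidden division by $3\delta-1$ or $2\delta-1$ sneaks in beyond the stated exclusions; I would double-check each use of (\ref{idtp5}) and (\ref{idtp6}) for this. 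Modulo that computation, both directions are routine manipulations within the identity set already available, so I would present the forward direction in full and the converse as a guided computation citing (\ref{idtp1}), (\ref{idtp4}), (\ref{idtp5}), (\ref{idtp6}) and (\ref{deltranspois}) at each step.
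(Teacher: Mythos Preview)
Your plan for the forward implication (the two vanishing identities $\Rightarrow$ Jordan brackets) is correct in spirit but heavier than necessary: the first JB identity is \emph{automatic} for any transposed $\delta$-Poisson algebra, because its left side equals zero by (\ref{idtp3}) and its right side equals zero by (\ref{idtp4}); you do not need to invoke $\{xy,zt\}=0$ or $\{xyz,t\}=0$ there at all. For the second and third JB identities your reduction via (\ref{idtp5}) (giving $uv\{x,y\}=0$) and the hypotheses is fine, and your worry about $\delta=\tfrac13$ is misplaced: (\ref{idtp5}) only requires $\delta\neq1$.

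The genuine gap is in your converse, at the step where you extract $\{xyz,t\}=0$ from the third JB identity. After you have $\{xy,zt\}=0$ and hence $uv\{x,y\}=0$, the third JB identity collapses to $\{xyz,t\}=\{ty,z\}x+\{tx,z\}y$, as you say. But the right-hand side does \emph{not} vanish by one more application of (\ref{deltranspois}) and $uv\{a,b\}=0$: applying (\ref{deltranspois}) gives $x\{ty,z\}=\delta\{xty,z\}$ (the cross term $\{ty,xz\}$ dies), so the right side becomes $2\delta\{xyt,z\}$, which is of the same shape as the left side, not of the shape $uv\{a,b\}$. What you actually obtain is the functional relation
\[
\{xyz,t\}=2\delta\,\{xyt,z\}
\]
valid for all $x,y,z,t$. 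The paper then \emph{iterates} this: applying it again with $z$ and $t$ interchanged yields $\{xyz,t\}=4\delta^2\{xyz,t\}$, and cycling through a third variable yields $\{xyz,t\}=8\delta^3\{xyz,t\}$. Since the only common root of $4\delta^2-1$ and $8\delta^3-1$ is $\delta=\tfrac12$, the hypothesis $\delta\neq\tfrac12$ forces $\{xyz,t\}=0$. Your sketch misses this iteration, and without it the argument is circular.

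For the second JB identity in the converse, the paper's route is also cleaner than the ``nonzero polynomial in $\delta$'' bookkeeping you anticipate: rewriting the second JB identity via (\ref{deltranspois}) and (\ref{idtp5}) yields directly $\{xt,yz\}=\{yt,xz\}$, and then the short chain
\[
\{xt,yz\}=\{yt,xz\}=\{zy,xt\}=-\{xt,yz\}
\]
(using only this symmetry and the anticommutativity of the bracket) forces $\{xt,yz\}=0$ with no parameter analysis at all. So the factor you were expecting to hunt for, with roots exactly at $0,\tfrac12,1$, never materializes; the exclusion $\delta\neq\tfrac12$ is used only in the iteration step above, and $\delta\neq0,1$ are needed just to make (\ref{deltranspois}) and (\ref{idtp5}) usable.
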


	\begin{proof}
		If $(\textrm{P}, \cdot, \{\cdot,\cdot\})$ is transposed $\delta$-Poisson, then due to~\eqref{idtp3} and~\eqref{idtp4}, it satisfies the 1st JB identity. Let us take now, the 2nd JB identity and using~\eqref{deltranspois} and~\eqref{idtp5} rewrite it in the following form:
		\begin{center}
			$\delta\big(\{xyt,z\} +\{yt,zx\}\big) +\frac{\delta^2}{1-\delta}\big(\{xy,zt\}+\{xt,yz\}\big) \ = \ \delta\big(\{xyt,z\}+\{xt,yz\}\big)+\frac{\delta^2}{1-\delta}\big(\{xy,zt\}+\{yt,xz\}\big)$.
		\end{center}
		Hence, $\{xt,yz\}=\{yt,xz\}$, which gives
		\begin{center}
			$\{xt,yz\} \ =\ \{yt,xz\} \ = \ \{zy,xt\}\ =\ - \{xt,yz\}$.
		\end{center}
		It follows $\{xt,yz\} \ = \ 0$. Now, the 3rd JB identity can be reduced to
		\begin{center}
			$\{xyz,t\} \ = \ \{ty,z\}x+\{tx,z\}y$.
		\end{center}
		Rewriting the right side of the obtained identity using~\eqref{deltranspois}, we have
		$\{xyz,t\}=2\delta \{ xyt,z\}$. The last gives
		\begin{center}
			$\{xyz,t\}=2\delta \{ xyt,z\}= 4\delta^2 \{xyz,t\}$ \ and \
			$\{xyz,t\}=2\delta \{ xzt,y\}= 4\delta^2 \{xyt,z\}=8\delta^3 \{xyz,t\}$.
		\end{center}
		Hence, $\{xyz,t\}=0$ and it gives our statement.
	\end{proof}

	\begin{theorem}\label{simpletrdelta}
		Let $\delta\notin \big\{ 0,1\big\}$ and $(\textrm{P}, \cdot, \{\cdot,\cdot\})$ be a~simple transposed $\delta$-Poisson algebra. Then $(\textrm{P}, \{\cdot,\cdot\})$ is simple.
	\end{theorem}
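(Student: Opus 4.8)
The plan is to prove that every Lie ideal of $({\rm P},\{\cdot,\cdot\})$ is $0$ or ${\rm P}$; since, by nontriviality, $\{{\rm P},{\rm P}\}\neq 0$, this yields simplicity of $({\rm P},\{\cdot,\cdot\})$. The whole argument runs on one elementary observation about how the transposed $\delta$-Poisson identity (\ref{deltranspois}) — rewritten as $\{ab,z\}=\frac{1}{\delta}\,a\{b,z\}-\{b,az\}$, legitimate since $\delta\neq 0$ — couples the two products. Namely: (i) for a Lie ideal $I$, the $\cdot$-ideal it generates is $I+{\rm P}\cdot I$ (because ${\rm P}\cdot(I+{\rm P}\cdot I)\subseteq {\rm P}\cdot I$ by commutativity and associativity of $\cdot$), and this subspace is again a Lie ideal, since $\{I,{\rm P}\}\subseteq I$ while $\{ay,z\}=\frac{1}{\delta}a\{y,z\}-\{y,az\}\in {\rm P}\cdot I+I$ for $a\in{\rm P}$, $y\in I$; hence $I+{\rm P}\cdot I$ is a two-sided ideal. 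Dually, (ii) for a $\cdot$-ideal $C$, each of $C,\ \{C,{\rm P}\},\ \{\{C,{\rm P}\},{\rm P}\},\dots$ is again a $\cdot$-ideal (induction on the level, using $x\{c,z\}=\delta(\{xc,z\}+\{c,xz\})$ with $xc\in C$), so the Lie ideal generated by $C$ is a two-sided ideal.

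These immediately pin down the coarse structure. The derived algebra $\{{\rm P},{\rm P}\}$ is a Lie ideal, and ${\rm P}\cdot\{{\rm P},{\rm P}\}\subseteq\{{\rm P},{\rm P}\}$ by (\ref{deltranspois}); being nonzero (nontriviality) it is a nonzero two-sided ideal, hence $\{{\rm P},{\rm P}\}={\rm P}$, so $({\rm P},\{\cdot,\cdot\})$ is perfect (in particular not one-dimensional abelian). Likewise ${\rm P}\cdot{\rm P}$ is a nonzero $\cdot$-ideal, so by (ii) it generates ${\rm P}$ as a Lie ideal. Now take any nonzero Lie ideal $I$ of $({\rm P},\{\cdot,\cdot\})$. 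By (i), $I+{\rm P}\cdot I$ is a nonzero two-sided ideal, hence $I+{\rm P}\cdot I={\rm P}$; what remains is to upgrade this to $I={\rm P}$.

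To do so, set $\bar I:=\{a\in I:\ {\rm P}\cdot a\subseteq I\}$. It is a $\cdot$-ideal by associativity, and it is a Lie ideal: for $a\in\bar I$, $z\in{\rm P}$ one has $\{a,z\}\in I$, and for every $x\in{\rm P}$, $x\{a,z\}=\delta(\{xa,z\}+\{a,xz\})\in I$ since $xa\in{\rm P}\cdot a\subseteq I$ and $a\in I$; hence ${\rm P}\cdot\{a,z\}\subseteq I$, i.e. $\{a,z\}\in\bar I$. So $\bar I$ is a two-sided ideal, thus $\bar I=0$ or $\bar I={\rm P}$; in the second case $I\supseteq\bar I={\rm P}$ and we are done. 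The theorem therefore reduces to the single assertion that \emph{no proper nonzero Lie ideal $I$ can satisfy both $I+{\rm P}\cdot I={\rm P}$ and $\bar I=0$.}

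This reduction leaves the real work, and I expect it to be the main obstacle: it is precisely here that $\delta\neq 1$ and the finer identities of Proposition \ref{idtdpa} must be used, all the earlier steps being formal consequences of (\ref{deltranspois}) alone. My plan for it is to split on whether $\{I,I\}=0$: since $\{I,I\}$ is again a Lie ideal of ${\rm P}$, one may pass to the last nonzero term of the derived series of $I$ and so assume $I$ is Lie-abelian. Then feeding arguments from $I$ into (\ref{idtp1})--(\ref{idtp6}) collapses them drastically — for example (\ref{idtp5}) gives $\{au,yv\}+\{av,yu\}=0$ and (\ref{idtp4}) gives $\{h,z\}\{z,y\}=0$ whenever $z\in I$ — and, together with the equalities ${\rm P}=\{{\rm P},{\rm P}\}={\rm P}\cdot I+I$ already established, these should force a nonzero element of $I$ into $\bar I$, a contradiction. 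Once this is in place, every Lie ideal of $({\rm P},\{\cdot,\cdot\})$ is $0$ or ${\rm P}$, and since ${\rm P}$ is a perfect nonabelian Lie algebra it is simple.
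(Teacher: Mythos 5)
Your preparatory reductions are all correct, and they run parallel to the paper's argument: $\{{\rm P},{\rm P}\}$ is a two\-/sided ideal, hence $({\rm P},\{\cdot,\cdot\})$ is perfect; for a nonzero Lie ideal $I$ the subspace $I+{\rm P}\cdot I$ is a two\-/sided ideal, hence equals ${\rm P}$; and your $\bar I$ is a two\-/sided ideal. But the proof is not finished. You explicitly defer what you yourself call ``the real work'' --- that no proper nonzero Lie ideal can satisfy $I+{\rm P}\cdot I={\rm P}$ and $\bar I=0$ --- to a plan whose conclusion is only conjectured (``these should force a nonzero element of $I$ into $\bar I$''). Moreover the plan itself has a flaw: passing to ``the last nonzero term of the derived series of $I$'' presupposes that this series terminates, i.e.\ that $I$ is Lie\-/solvable. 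Nothing in the hypotheses guarantees this: no finite\-/dimensionality is assumed, and even in finite dimension the derived series may stabilize at a nonzero perfect Lie ideal, so the reduction to the Lie\-/abelian case is not available.

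The missing ingredient is a one\-/line consequence of identity (\ref{idtp2}), $\{hx,\{y,z\}\}+\{hy,\{z,x\}\}+\{hz,\{x,y\}\}=0$. Taking $x\in I$ and $h,y,z\in{\rm P}$, the last two terms lie in $\{{\rm P},\{{\rm P},I\}\}\subseteq I$, so $\{hx,\{y,z\}\}\in I$; since $\{{\rm P},{\rm P}\}={\rm P}$ by perfectness, this yields $\{{\rm P}\cdot I,\,{\rm P}\}\subseteq I$. Plugged into your own equality $I+{\rm P}\cdot I={\rm P}$, this finishes the proof immediately: ${\rm P}=\{{\rm P},{\rm P}\}=\{I+{\rm P}\cdot I,\,{\rm P}\}\subseteq I$, so $I={\rm P}$ --- no case split on $\{I,I\}$ and no $\bar I$ are needed. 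The paper packages the same containment differently, via the transporter $I'=\{u\in{\rm P}:\{{\rm P},u\}\subseteq I\}$, which contains $I$ and ${\rm P}\cdot I$ and is shown by (\ref{idtp1}) to be a two\-/sided ideal distinct from ${\rm P}$, forcing $I\subseteq I'=0$. So your scaffolding is sound and compatible with the paper's route, but without an appeal to the finer identities of Proposition \ref{idtdpa} (specifically (\ref{idtp2})) the decisive step is absent.
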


	\begin{proof}
		The main idea of our proof will be based on ideas given in the case of transposed $\frac{1}{2}$-Poisson algebras in~\cite{fer23}. It is easy to see, that in each transposed $\delta$-Poisson algebra $(\textrm{P}, \cdot, \{\cdot,\cdot\})$,
		$\{ \textrm{P},\textrm{P} \}$ gives an ideal. Hence, $(\textrm{P}, \{ \cdot,\cdot\})$ is perfect, i.e. $\{\textrm{P}, \textrm{P}\}=\textrm{P}$.

		Let $I$ be an ideal of $(\textrm{P}, \{\cdot, \cdot\})$, then by \eqref{idtp2} we have
		\begin{center}
			$\{ \textrm{P}I, \{\textrm{P},\textrm{P}\}\} \subseteq \{\textrm{P}\textrm{P}, \{\textrm{P},I\}\}+
			\{\textrm{P}\textrm{P}, \{I,\textrm{P}\}\} \subseteq I$ and
			$\{ \textrm{P}I, \textrm{P}\} \subseteq I$.
		\end{center}
		Now, we consider a~maximal subspace $I'$ such that $\{\textrm{P},I'\} \subseteq I$. It is clear that $I' \neq \textrm{P}$ and we can assume that $\textrm{P}I \subseteq I'$. By \eqref{idtp1}, we have
		\begin{center}
			$I'\{\textrm{P},\textrm{P}\} \subseteq \textrm{P}\{\textrm{P},I'\}+\textrm{P} \{I',\textrm{P}\} \subseteq I$.
		\end{center}
		Hence, $I'\textrm{P} \subseteq \textrm{P}$ and $\{\textrm{P},I'\} \subseteq I \subseteq I'$
		which gives that $I'$ is an ideal of $(\textrm{P}, \cdot, \{\cdot, \cdot\})$ and $I'=0$. It follows that $(\textrm{P}, \{\cdot, \cdot\})$ is simple.
	\end{proof}

	\begin{theorem}
		Let $\delta\notin \big\{ 0,1\big\}$ and $(\textrm{P}, \cdot, \{\cdot,\cdot\})$ be a~nontrivial complex simple finite\--dimen\-sion\-al transposed $\delta$-Poisson algebra. Then $\delta=-1$ and $(\textrm{P}, \cdot, \{\cdot,\cdot\})$ is isomorphic to one of the following algebras
		\begin{enumerate}[$\bullet$]
			\item $\textrm{A}_{1}:\left\{
			\begin{array}{lllllllll}
				e_{1}\cdot e_{1}&=&e_{2}, \\
				\left\{ e_{1},e_{2}\right\}&=&e_{3},&
				\left\{ e_{1},e_{3}\right\}&=&e_{1},&
				\left\{ e_{2},e_{3}\right\}&=&-e_{2}.
			\end{array}
			\right. $

			\item $\textrm{A}_{2}:\left\{
			\begin{array}{lllllllll}
				e_{1}\cdot e_{1}& =& e_{3}, & e_{1}\cdot e_{3}& =& -e_{2}, \\
				\left\{ e_{1},e_{2}\right\}&=&e_{3},&
				\left\{ e_{1},e_{3}\right\}&=&e_{1},&
				\left\{ e_{2},e_{3}\right\}&=&-e_{2}.
			\end{array}
			\right. $
		\end{enumerate}
	\end{theorem}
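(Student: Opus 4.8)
The plan is to combine the structural result of Theorem \ref{simpletrdelta} with the explicit identities of Proposition \ref{idtdpa} and a case analysis on the Lie algebra $({\rm P}, \{\cdot,\cdot\})$. First I would recall that by Theorem \ref{simpletrdelta}, since $({\rm P}, \cdot, \{\cdot,\cdot\})$ is simple with $\delta\neq 0,1$, the Lie algebra $L:=({\rm P}, \{\cdot,\cdot\})$ is itself simple. In the complex finite-dimensional world a simple Lie algebra is either one-dimensional (impossible, since a $1$-dimensional Lie algebra is abelian, not simple) or has dimension $\geq 3$, the smallest being $\mathfrak{sl}_2$. So the first reduction is: $L$ is a complex simple finite-dimensional Lie algebra, and the commutative associative multiplication $\cdot$ on the same underlying space must satisfy the transposed $\delta$-Poisson compatibility \eqref{deltranspois} together with all the derived identities \eqref{idtp1}--\eqref{idtp6}.

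Next I would exploit \eqref{idtp4}: $\{h,x\}\{y,z\}+\{h,y\}\{z,x\}+\{h,z\}\{x,y\}=0$. Since $L$ is perfect, $\{\cdot,\cdot\}$ is surjective onto ${\rm P}$, so this identity severely constrains the bilinear map $\cdot$ in terms of the Lie bracket; in particular it says the symmetric product $\cdot$ factors through a map that kills a large subspace spanned by elements of the form $\{h,x\}$. The key observation I would chase is that for $L=\mathfrak{sl}_2$, using a Chevalley basis $e,h,f$, one can write down the general commutative associative product compatible with the bracket and \eqref{idtp4}, then impose \eqref{deltranspois} and associativity/commutativity of $\cdot$ to pin down $\delta$. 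I expect that the compatibility forces $\delta=-1$: indeed \eqref{idtp5} reads $\delta^2(\{xu,yv\}+\{xv,yu\})=(1-\delta)uv\{x,y\}$, and evaluating on a well-chosen set of basis elements (using that $\cdot$ is nilpotent-looking, since a nonzero commutative associative product on a simple Lie algebra's space that is compatible cannot be unital by the Proposition preceding \eqref{idtp5}'s context) yields a scalar equation whose only solution with $\delta\neq 0,1$ is $\delta=-1$. For higher-rank simple $L$ I would argue the compatible product must vanish (contradicting nontriviality) by a rank/weight argument: the product of two root vectors lands in a root space or the Cartan, and \eqref{idtp4} plus \eqref{idtp5} with $\delta=-1$ become too rigid to support a nonzero product unless $L=\mathfrak{sl}_2$.

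Once $L\cong\mathfrak{sl}_2$ and $\delta=-1$ are established, the remaining task is to classify the commutative associative products $\cdot$ on the $3$-dimensional space of $\mathfrak{sl}_2$ making $({\rm P},\cdot,\{\cdot,\cdot\})$ a simple anti-transposed Poisson algebra. Here I would set up an explicit basis adapted to a nilpotent element of $\mathfrak{sl}_2$: since $\cdot$ is a nonzero commutative associative product of a $3$-dimensional space that must be $\mathrm{ad}$-compatible in the sense of \eqref{deltranspois}, $({\rm P},\cdot)$ is a $3$-dimensional commutative associative algebra, hence (being forced nilpotent by the earlier corollary-type arguments, as a unital part would clash with $\delta\neq\frac12$) it is one of the short list of nilpotent commutative associative algebras in dimension $3$: either $e_1\cdot e_1=e_2$ (with $e_3$ an annihilator) or $e_1\cdot e_1=e_2,\ e_1\cdot e_2=e_3$ (the truncated polynomial algebra) or zero. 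Matching each against a Lie bracket isomorphic to $\mathfrak{sl}_2$ via \eqref{deltranspois} and demanding simplicity of the whole structure, I would solve the resulting linear/quadratic system for the structure constants; up to isomorphism this yields exactly the two algebras $\mathrm{A}_1$ (the case $e_1\cdot e_1=e_2$ with annihilator direction $e_3$) and $\mathrm{A}_2$ (the case $e_1\cdot e_1=e_3,\ e_1\cdot e_3=-e_2$). A final verification step checks both listed algebras genuinely satisfy \eqref{deltranspois} with $\delta=-1$, that $\cdot$ is associative commutative, that $\{\cdot,\cdot\}$ is $\mathfrak{sl}_2$, and that they are non-isomorphic and simple.

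The main obstacle I anticipate is the step forcing $\delta=-1$ and simultaneously ruling out all simple Lie algebras of rank $\geq 2$: this requires organizing the consequences of \eqref{idtp4} and \eqref{idtp5} on a Chevalley basis cleanly enough that the weight/grading obstructions become transparent, rather than collapsing into an unwieldy coefficient computation. A secondary nuisance is making the classification "up to isomorphism" rigorous — one must quotient the solution set of structure constants by the automorphisms of $\mathfrak{sl}_2$ (i.e. $\mathrm{PGL}_2$ acting by conjugation), which is where the normal forms $\mathrm{A}_1$ and $\mathrm{A}_2$ come from, and one should check no further identifications merge them. Everything else (associativity checks, verifying the Jacobi identity, confirming nontriviality) is routine.
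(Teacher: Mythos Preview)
Your overall plan is reasonable and the first reduction via Theorem \ref{simpletrdelta} is exactly what the paper does. But your route to eliminating all simple Lie algebras except $\mathfrak{sl}_2$ with $\delta=-1$ is genuinely different from the paper's, and much harder than it needs to be. The paper observes that the transposed $\delta$-Poisson identity \eqref{deltranspois} says precisely that every left multiplication $L_x\colon y\mapsto x\cdot y$ is a $\delta$-derivation of the Lie algebra $({\rm P},\{\cdot,\cdot\})$. This is the content behind Propositions \ref{propdelta} and \ref{tpstr}: once $({\rm P},\{\cdot,\cdot\})$ is known to be a complex finite-dimensional simple Lie algebra, one invokes the classification of its $\delta$-derivations from \cite{fil1,kay09,kay10}, which are trivial except when the algebra is $\mathfrak{sl}_2$ and $\delta=-1$. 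That single citation replaces your entire ``rank/weight argument'' and your extraction of a scalar equation from \eqref{idtp5}.

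Your approach via \eqref{idtp4} and \eqref{idtp5} on a Chevalley basis is not wrong in spirit, but as you yourself flag, it is where the real work lies, and your sketch does not supply it: saying that \eqref{idtp5} ``yields a scalar equation whose only solution is $\delta=-1$'' and that higher rank is excluded by ``weight/grading obstructions'' is not yet an argument. If you pursue this route you would essentially be reproving the $\delta$-derivation results of Filippov and Kaygorodov ad hoc. The paper's use of Proposition \ref{tpstr} is both shorter and conceptually cleaner; the $\delta$-derivation viewpoint explains \emph{why} $\mathfrak{sl}_2$ at $\delta=-1$ is the unique exception, rather than discovering it by coefficient-matching. For the final step on $\mathfrak{sl}_2$ both you and the paper proceed by direct computation up to the $\mathrm{Aut}(\mathfrak{sl}_2)$-action, so there is no discrepancy there.
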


	\begin{proof}
		Let $(\textrm{P}, \cdot, \{\cdot,\cdot\})$ be a~complex simple finite-dimensional transposed $\delta$-Poisson algebra. Thanks to Theorem~\ref{simpletrdelta}, $(\textrm{P}, \{\cdot,\cdot\})$ is simple. Thanks to Proposition~\ref{tpstr}, if $(\textrm{P}, \{\cdot,\cdot\}) \not\cong \mathfrak{sl}_2$ or $\delta\neq-1$, then
		$(\textrm{P}, \{\cdot,\cdot\})$ does not admit nontrivial transposed $\delta$-Poisson structures. The case $\delta=-1$ and $(\textrm{P}, \{\cdot,\cdot\}) \cong \mathfrak{sl}_2$
		can be done by a~direct calculation. That gives, up to isomorphism, only two nontrivial opportunities. Namely, we have that $(\textrm{P}, \cdot, \{\cdot,\cdot\})$ is isomorphic to one of algebras from our statement.
	\end{proof}

	\begin{remark}
		It is known that the Kantor double of simple Poisson and transposed Poisson algebras is also simple. Thanks to Proposition~\ref{transJB}, $\rm{A}_{1}$ is Jordan brackets, hence the Kantor double $J(\rm{A}_{1})$ is a~Jordan superalgebra.
		$\rm{A}_{1}$ gives an example of a~simple transposed $(-1)$-Poisson algebra with nilpotent Kantor double.
	\end{remark}

	\section{Poisson-type algebras in one multiplication}\label{onemu}

	Let $({\mathcal A}, \cdot)$ be an algebra. We consider the following two new products on the underlying vector space ${\mathcal A}$ defined by
	\begin{center}
		$x\circ y:=\frac{1}{2}(x\cdot y+y\cdot x),  [x,y]:=\frac{1}{2}
		(x\cdot y-y\cdot x)$.
	\end{center}
	Every generic Poisson--Jordan algebra
	(resp., generic Poisson algebra) $({\mathcal A},\circ, [\cdot,\cdot])$ is associated with precisely one noncommutative Jordan (resp., Kokoris) algebra $({\mathcal A},\cdot)$. That is, we have a~bijective correspondence between generic Poisson--Jordan (resp., generic Poisson) algebras and noncommutative Jordan (resp., Kokoris) algebras~\cite{aak}. The main aim of the present Section is the construction of a~one-to-one correspondence between new Poisson-type algebras and suitable varieties of algebras with one multiplication.

	\subsection{$\delta$-Poisson algebras in one multiplication}

	One of the first papers dedicated to the study of Poisson algebras in one multiplication was published by Goze and Remm~\cite{GR}. The present section is dedicated to the extension of some results of Goze and Remm to the $\delta$-Poisson case.

	\begin{proposition}\label{delta}
		Consider an algebra $\left( \mathcal{A},\cdot \right) $ whose underlying vector space $\mathcal{A}$ is endowed with the two new products $\circ $ and
		$\left[ \cdot,\cdot \right] $. If $\delta \neq 0$, then $(\mathcal{A},\circ,\left[ \cdot,\cdot \right] )$ is a~${\delta }${-Poisson algebra if and only if the }algebra $\left( \mathcal{A},\cdot \right) $ satisfies the following identity:
		\begin{equation}
			f_{\delta}(x,y,z)\:=\ 3\delta \left( xy\right) z+\left( 1-2\delta \right) y\left(
			zx\right) -\left( 2\delta +1\right) x\left( yz\right) -x\left( zy\right)
			+y\left( xz\right) +\delta z\left( xy\right) \ =\ 0.
			\label{id-delta-poisson}
		\end{equation}
	\end{proposition}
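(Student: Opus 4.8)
The statement is an "if and only if" characterizing when the depolarization $(\mathcal A,\circ,[\cdot,\cdot])$ of a single multiplication $\cdot$ is a $\delta$-Poisson algebra. The plan is to substitute $xy = x\circ y + [x,y]$ everywhere and unwind all three defining conditions of a $\delta$-Poisson algebra — commutative-associativity of $\circ$, the Jacobi identity for $[\cdot,\cdot]$, and the $\delta$-Leibniz rule $\{xy,z\}=\delta(x\{y,z\}+\{x,z\}y)$ — into multilinear identities purely in $\cdot$, then show that all of them are consequences of the single identity $f_\delta(x,y,z)=0$, and conversely that $f_\delta=0$ follows from the three. The cleanest route is to notice that $f_\delta(x,y,z)$ is (up to the scalar $\delta$, which is why $\delta\neq 0$ is needed) exactly the combination one obtains by writing the $\delta$-Leibniz identity $\{x\circ y,z\}=\delta(x\circ\{y,z\}+\{x,z\}\circ y)$ in terms of $\cdot$; so I would start there.

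First I would record the expansions: $x\circ y=\frac12(xy+yx)$ and $[x,y]=\frac12(xy-yx)$, hence $[x\circ y,z]=\frac14\big((xy+yx)z-z(xy+yx)\big)$ and $x\circ[y,z]=\frac14\big(x(yz-zy)+(yz-zy)x\big)$, and similarly for $[x,z]\circ y$. Plugging these into $[x\circ y,z]-\delta\big(x\circ[y,z]+[x,z]\circ y\big)$ and clearing the factor $\frac14$, I would check that the resulting degree-3 polynomial in $\cdot$ equals $-\tfrac{1}{?}\,$ a fixed nonzero multiple of $f_\delta(x,y,z)$ after the substitution; more precisely the $\delta$-Leibniz identity for the depolarization is equivalent to the symmetrization/antisymmetrization of $f_\delta$. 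Since $f_\delta$ already appears in the statement with these exact coefficients, this is a bookkeeping check; the point is that the $\delta$-Leibniz rule for $(\mathcal A,\circ,[\cdot,\cdot])$ is \emph{equivalent} to imposing $f_\delta(x,y,z)=0$ for all $x,y,z$, provided $\delta\neq 0$ (if $\delta=0$ the rule is vacuous on the $\cdot$-side in one direction, which is why the hypothesis excludes it).

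Next I would show that $f_\delta(x,y,z)=0$ \emph{already forces} the other two conditions, so they are not independent constraints. Concretely: (i) symmetrizing $f_\delta$ appropriately (e.g. $f_\delta(x,y,z)+f_\delta(y,x,z)$ or a suitable cyclic combination) should yield that $\circ$ is associative and commutative — commutativity of $\circ$ is automatic, and associativity $(x\circ y)\circ z=x\circ(y\circ z)$ should drop out as the totally symmetric part of $f_\delta$; (ii) the Jacobi identity for $[\cdot,\cdot]$ should emerge as the totally antisymmetric part of $f_\delta$ (the alternating sum over the six permutations of $x,y,z$), since in any nonassociative algebra the associator's alternating sum controls Jacobi. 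I would carry out these two projections of $f_\delta$ and verify they give exactly commutativity/associativity of $\circ$ and Jacobi for $[\cdot,\cdot]$. This establishes the "if" direction: $f_\delta=0 \Rightarrow$ all three axioms $\Rightarrow$ $(\mathcal A,\circ,[\cdot,\cdot])$ is $\delta$-Poisson.

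For the converse, assume $(\mathcal A,\circ,[\cdot,\cdot])$ is $\delta$-Poisson. Then in particular the $\delta$-Leibniz rule holds, and by the equivalence established in the first step this is precisely $f_\delta(x,y,z)=0$ (here again $\delta\neq0$ is what lets me divide out and recover $f_\delta$ exactly rather than a proper multiple). So the converse is immediate once the first-step computation is in hand. \textbf{The main obstacle} is purely the coefficient bookkeeping in that first step: one must expand $[x\circ y,z]$, $x\circ[y,z]$, $[x,z]\circ y$ — each a sum of four $\cdot$-monomials — and match the twelve-or-so resulting terms against the six terms of $f_\delta$ with their signs and the coefficients $3\delta$, $1-2\delta$, $-(2\delta+1)$, $-1$, $1$, $\delta$; it is routine but error-prone, and getting the exact scalar right is what makes the "if and only if" (as opposed to a one-directional implication) work. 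I would organize this by listing the six ordered products $xy,yx,xz,zx,yz,zy$ post-multiplied and pre-multiplied by the third variable, i.e. the twelve monomials $(xy)z,\,z(xy),\dots$, and reading off coefficients into a table.
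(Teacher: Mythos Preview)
Your plan has a genuine gap in the ``only if'' direction, and it stems from a wrong first step. You claim that expanding the $\delta$-Leibniz identity $[x\circ y,z]=\delta\big(x\circ[y,z]+[x,z]\circ y\big)$ in terms of $\cdot$ yields (up to a nonzero scalar) exactly $f_\delta(x,y,z)$. This is false. If you actually carry out that expansion you get the twelve-term expression
\[
h_3^\delta(x,y,z)=(xy)z+(yx)z-z(xy)-z(yx)-\delta x(yz)+\delta x(zy)-\delta(yz)x+\delta(zy)x-\delta(xz)y+\delta(zx)y-\delta y(xz)+\delta y(zx),
\]
which is symmetric in $x,y$ (as it must be, since $[x\circ y,z]$ and $x\circ[y,z]+[x,z]\circ y$ are). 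But $f_\delta(x,y,z)$ is \emph{not} symmetric in $x,y$: for instance $(xy)z$ occurs with coefficient $3\delta$ while $(yx)z$ does not occur at all. So $f_\delta$ cannot be a scalar multiple of $h_3^\delta$, nor is it obtainable from $h_3^\delta$ and its $\mathbb S_3$-translates alone. Your converse argument (``$\delta$-Leibniz holds, hence $f_\delta=0$'') therefore does not go through.

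What actually happens, and what the paper does, is that $f_\delta$ lies in the span of \emph{all three} identities --- the associativity identity $h_1$ for $\circ$, the Jacobi identity $h_2$, and the $\delta$-Leibniz identity $h_3^\delta$ --- evaluated at various permutations of the arguments. Concretely the paper exhibits
\[
f_\delta(x,y,z)=\tfrac{\delta+1}{2}h_1(x,y,z)+\tfrac{\delta-1}{2}h_1(y,x,z)+\tfrac{\delta}{2}h_2(x,y,z)+\tfrac{\delta}{2}h_3^\delta(x,y,z)+\tfrac{\delta-1}{2}h_3^\delta(x,z,y)+\tfrac{\delta+1}{2}h_3^\delta(y,z,x),
\]
so associativity and Jacobi are genuinely needed to conclude $f_\delta=0$. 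Your ``if'' direction --- recovering $h_1,h_2,h_3^\delta$ from permuted copies of $f_\delta$ divided by $\delta$ --- is fine in outline and matches the paper; but for the ``only if'' direction you must abandon the claimed equivalence of $h_3^\delta$ with $f_\delta$ and instead exhibit $f_\delta$ as a linear combination of $h_1$, $h_2$, and $h_3^\delta$.
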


	\begin{proof}
		Suppose first that $(\mathcal{A},\circ,\left[ \cdot,\cdot \right] )$ is a~${
		\delta }${-Poisson algebra. }Since $(\mathcal{A},\circ )$ is associative, we have the identity $\left( x\circ y\right) \circ z=x\circ \left( y\circ z\right) $ which is equivalent to:
		\[
			h_{1}(x,y,z)
            := (xy)z+(yx)z-(zy)x-(yz)x-x(yz)-x(zy)+z(yx)+z(xy)
            = 0.
		\]
		Also, since $(\mathcal{A},\left[ \cdot,\cdot \right] )$ is a~Lie algebra, the
		Jacobi identity gives:
		\begin{longtable}{rclcrcrc}
			$h_{2}(x,y,z)$&$:=$&$
			(xy)z-(yx)z-(zy)x-(xz)y+(yz)x+(zx)y$\\
			&&$-x(yz)+y(xz)+z(yx)+x(zy)-y(zx)-z(xy)$&$=$&$0$.
		\end{longtable}
		Finally, the identity
		$\left[ x\circ y,z\right] =\delta \big( x\circ \left[ y,z
		\right] + \left[ x,z\right] \circ y \big)$ \ gives:
		\begin{longtable}{rclcl}
			$h_{3}^{\delta }(x,y,z)$&$:=$&$\left( xy\right) z+\left( yx\right) z-z\left(
			xy\right) -z\left( yx\right) -\delta x\left( yz\right) +\delta x\left(zy\right)$ \\
			&&$-\delta \left( yz\right) x+\delta \left( zy\right) x-\delta \left(
			xz\right) y+\delta \left( zx\right) y-\delta y\left( xz\right) +\delta y\left( zx\right) $&$=$&$0$.
		\end{longtable}
		Now it is straightforward to verify that
		\begin{longtable}{rclcl}
			$f_{\delta }(x,y,z)$ &$:=$&$\frac{1}{2}\left( \delta +1\right) h_{1}(x,y,z)+\frac{
			1}{2}\left( \delta -1\right) h_{1}(y,x,z)+\frac{1}{2}\delta h_{2}(x,y,z)$ \\
			&&$+\frac{1}{2}\delta h_{3}^{\delta }(x,y,z)+\frac{1}{2}\left( \delta
			-1\right) h_{3}^{\delta }(x,z,y)+\frac{1}{2}\left( \delta +1\right)
			h_{3}^{\delta }(y,z,x)$&$=$&$0$.
		\end{longtable}
		On the other hand, suppose that $(\mathcal{A},\cdot )$ satisfies~\eqref{id-delta-poisson}. Then
		\begin{align*}
			(x, y, z{ }&{ })_{\circ} \\
            ={ }&{ } \frac{1}{4\delta }\big(\delta (xy)z+\delta (yx)z-\delta (zy)x-\delta (yz)x -\delta x(yz)-\delta x(zy)+\delta z(yx)+\delta z(xy) \big) \\
			={ }&{ } \frac{1}{12\delta} \Big(f_{\delta }(x,y,z)-\left( 1-2\delta \right) y\left( zx\right) +\left(
			2\delta +1\right) x\left( yz\right) +x\left( zy\right) -y\left( xz\right)
			-\delta z\left( xy\right) \\
			&{\qquad}+ f_{\delta }(y,x,z)-\left( 1-2\delta \right) x\left( zy\right) +\left(
			2\delta +1\right) y\left( xz\right) +y\left( zx\right) -x\left( yz\right)
			-\delta z\left( yx\right) \\
			&{\qquad}- f_{\delta }(z,y,x)+\left( 1-2\delta \right) y\left( xz\right) - \left(2\delta +1\right) z\left( yx\right) -z\left( xy\right) +y\left( zx\right)
			+\delta x\left( zy\right) \\
			&{\qquad}- f_{\delta }(y,z,x)+\left( 1-2\delta \right) z\left( xy\right) -\left(2\delta +1\right) y\left( zx\right) -y\left( xz\right) +z\left( yx\right)
			+\delta x\left( yz\right) \\
			&{\qquad}- 3\delta x(yz)-3\delta x(zy)+3\delta z(yx)+3\delta z(xy) \Big) \\
			={ }&{ }\frac{1}{12\delta } \Big(f_{\delta }(x,y,z)+f_{\delta }(y,x,z)-f_{\delta }(z,y,x)-f_{\delta}(y,z,x) \Big) \\
            ={ }&{ } 0.
		\end{align*}
		Hence, $x\circ \left( y\circ z\right) =\left( x\circ y\right) \circ z$ and therefore $(\mathcal{A},\circ )$ is associative. Similarly, by a~straightforward calculation we have
		\begin{align*}
			\left[ \left[ x,y\right],z\right]{ }&{ } + \left[ \left[ y,z\right],x\right] + \left[ \left[ z,x\right],y\right] \\
            &=\frac{1}{3\delta } \big(f_{\delta}(x,y,z)-f_{\delta }(x,z,y)-f_{\delta }(y,x,z)+ f_{\delta }(z,x,y)+f_{\delta}(y,z,x)-f_{\delta }(z,y,x)\big) \\
            &=0, \\
			\left[ x\circ y,z\right]{ }&{ } -\delta\big( x\circ \left[ y,z\right] + \left[ x,z\right] \circ y \big) \\
            &=\frac{1}{3\delta }\big(f_{\delta }(x,y,z)+f_{\delta}(y,x,z)\big) \\
            &{\quad}+\frac{1}{3}\big( f_{\delta}(z,x,y)-f_{\delta }(x,z,y)-f_{\delta }(y,z,x)+f_{\delta }(z,y,x)\big) \\
            &=0.
        \qedhere
		\end{align*}
	\end{proof}

	\begin{proposition}\label{37pro}
		Let $\left( \mathcal{A},\cdot \right) $ be an algebra. If $(\mathcal{A},\cdot)$
		satisfies the identity~\eqref{id-delta-poisson} with $\delta \neq 0$, then $(\mathcal{A},\cdot)$ is power-associative.
	\end{proposition}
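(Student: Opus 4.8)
The plan is to transfer the question to the commutative associative algebra $(\mathcal A,\circ)$. Since $\delta\neq 0$, Proposition~\ref{delta} gives that $(\mathcal A,\circ,[\cdot,\cdot])$ is a $\delta$-Poisson algebra; so $(\mathcal A,\circ)$ is commutative associative and the multiplicativity identity $[a\circ b,c]=\delta\big([a,c]\circ b+a\circ [b,c]\big)$ holds. I will also use the decomposition $a\cdot b=a\circ b+[a,b]$, so in particular $x\cdot x=x\circ x$. Fix $x\in\mathcal A$ and let $p_{n}$ be the $n$-th power of $x$ in $(\mathcal A,\circ)$: this is unambiguous, $p_{1}=x$, $p_{2}=x\cdot x$, and $p_{i}\circ p_{j}=p_{i+j}$, $p_{i}=p_{i-1}\circ x$. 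Since $p_{i}\cdot p_{j}=p_{i}\circ p_{j}+[p_{i},p_{j}]=p_{i+j}+[p_{i},p_{j}]$, the whole statement reduces to the claim that $[p_{i},p_{j}]=0$ for all $i,j\ge 1$.

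To prove that claim I would run two nested inductions. First, $[x,p_{j}]=0$ for all $j$, by induction on $j$: the base case is $[x,x]=0$, and for $j\ge 2$ one writes $p_{j}=p_{j-1}\circ x$ and applies the multiplicativity identity with $(a,b,c)=(p_{j-1},x,x)$, which (the term $p_{j-1}\circ[x,x]$ being zero) yields $[x,p_{j}]=-\delta\,[p_{j-1},x]\circ x=0$ by the induction hypothesis. Then $[p_{i},p_{j}]=0$ for all $j$, by induction on $i$: the case $i=1$ is the previous step, and for $i\ge 2$ one writes $p_{i}=p_{i-1}\circ x$ and uses the identity with $(a,b,c)=(p_{i-1},x,p_{j})$ to get $[p_{i},p_{j}]=\delta\big([p_{i-1},p_{j}]\circ x+p_{i-1}\circ[x,p_{j}]\big)=0$, using the inductive hypothesis together with the case $i=1$.

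It then remains to assemble the conclusion. The claim gives $p_{i}\cdot p_{j}=p_{i+j}$ for all $i,j\ge 1$; in particular $p_{n+1}=p_{n}\cdot x$, so the linear span $V$ of $\{p_{n}\}_{n\ge 1}$ is a $\cdot$-subalgebra of $\mathcal A$ containing $x$. Moreover $\cdot$ and $\circ$ agree on the spanning family $\{p_{n}\}$, hence on all of $V$ by bilinearity, so $(V,\cdot)=(V,\circ)$ is associative; therefore the $\cdot$-subalgebra generated by $x$ is associative, and since $x$ was arbitrary, $(\mathcal A,\cdot)$ is power-associative. I do not expect a genuine obstacle here: the only content is applying the single $\delta$-Poisson multiplicativity identity twice, and the point needing care is purely organisational — working with the $\circ$-powers $p_{n}$ rather than with $\cdot$-powers, and ordering the nested inductions so that $[x,p_{j}]=0$ (induction on $j$) is established before the induction on $i$.
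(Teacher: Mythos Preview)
Your argument is correct. The two inductions go through as written, the span $V$ is a $\cdot$-subalgebra on which $\cdot$ coincides with the associative product $\circ$, and since $x\in V$ this forces the monogenic $\cdot$-subalgebra to be associative.

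Your route, however, is genuinely different from the paper's. The paper does not pass to the polarised $\delta$-Poisson structure at all: it simply plugs $x=y=z$ and then $x=y,\ z=x^{2}$ into $f_{\delta}$ to obtain $(x,x,x)=\tfrac{1}{3\delta}f_{\delta}(x,x,x)=0$ and $(x,x,x^{2})=\tfrac{1}{4\delta}f_{\delta}(x,x,x^{2})=0$, and then invokes Albert's criterion (Lemma~3 of \cite{Albert}) for power-associativity, applied to $\mathcal A^{\mathrm{op}}$. So the paper trades an external black box (Albert's theorem, which over $\mathbb C$ reduces power-associativity to third and fourth powers) for a two-line computation, whereas you give a fully self-contained argument that avoids any citation. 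Your proof also makes transparent \emph{why} the result holds: the obstruction to power-associativity is exactly the brackets $[p_{i},p_{j}]$, and the $\delta$-Poisson identity kills these inductively. A small bonus of your approach is that it shows directly that the $\cdot$-subalgebra generated by any single element is commutative and coincides with the $\circ$-subalgebra, which is slightly more informative than bare power-associativity.
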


	\begin{proof}
		We have $\left( x,x,x\right) =\frac{1}{3\delta }f_{\delta }(x,x,x)=0$. This then implies
		\begin{center}
			$\left( x,x,x^{2}\right) \ =\ \frac{1}{4\delta }f_{\delta
			}(x,x,x^{2}) \ =\ 0$.
		\end{center}
		Then, the opposite algebra $ \mathcal{A}^{op}$ satisfies the conditions from~\cite[Lemma 3]{Albert} and hence, $ \mathcal{A}^{op}$ is power-associative. The last gives that $(\mathcal{A},\cdot)$ \ is power-associative.
	\end{proof}

	\begin{corollary}
		Let $(\mathcal{A},\cdot)$ be a~finite dimensional algebra satisfying~\eqref{id-delta-poisson} with $\delta \neq 0$. If $ \mathcal{A}$ is not a~nilalgebra, then $ \mathcal{A}$ contains a~non-zero idempotent element.
	\end{corollary}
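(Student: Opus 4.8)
The plan is to invoke Proposition \ref{37pro}: since $(\mathcal{A},\cdot)$ satisfies (\ref{id-delta-poisson}) with $\delta\neq0$, the algebra is power-associative. Power-associativity is exactly the hypothesis under which the classical structure theory of Albert applies. Recall that in a finite-dimensional power-associative algebra $\mathcal{A}$, the notion of nilpotency of single elements behaves well: an element $a$ is nilpotent if $a^n=0$ for some $n$, and $\mathcal{A}$ is a nilalgebra precisely when every element of $\mathcal{A}$ is nilpotent. The contrapositive of what we want to prove is: if $\mathcal{A}$ contains no non-zero idempotent, then $\mathcal{A}$ is a nilalgebra. So the plan is to take an arbitrary non-zero $a\in\mathcal{A}$, look at the (commutative, associative, by power-associativity) subalgebra $\mathbb{C}[a]$ generated by $a$, and show that if $a$ is not nilpotent then $\mathbb{C}[a]$ — being a finite-dimensional commutative associative algebra over $\mathbb{C}$ — contains a non-zero idempotent, contradicting the hypothesis.

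First I would record that in a power-associative algebra the subalgebra generated by a single element $a$ is associative and commutative, hence isomorphic to a quotient $\mathbb{C}[t]/(p(t))$ for some polynomial $p$ with $p(0)=0$ (the constant term vanishes because $\mathbb{C}[a]$ has no unit forced on it and is spanned by powers $a,a^2,\dots$). If $a$ is not nilpotent, then $p(t)=t^k q(t)$ with $q(0)\neq0$, and by the Chinese Remainder Theorem $\mathbb{C}[t]/(p(t))\cong \mathbb{C}[t]/(t^k)\times\mathbb{C}[t]/(q(t))$; the factor $\mathbb{C}[t]/(q(t))$ is non-zero and unital, so it contributes a non-zero idempotent $e$ lying in $\mathbb{C}[a]\subseteq\mathcal{A}$. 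Thus $\mathcal{A}$ has a non-zero idempotent. Contrapositively, if $\mathcal{A}$ has no non-zero idempotent then every $a$ is nilpotent, i.e. $\mathcal{A}$ is a nilalgebra — which is the contrapositive of the assertion, completing the argument.

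I do not expect a genuine obstacle here: the only subtlety is making sure the single-generated subalgebra argument is phrased correctly for a possibly non-unital finite-dimensional power-associative algebra, which is exactly the content of the classical fact (going back to Albert) that a finite-dimensional power-associative algebra over a field of characteristic $0$ which is not a nilalgebra contains a non-zero idempotent. One can either cite \cite{Albert} directly for this fact, or spell out the three-line Chinese-remainder argument above. Since Proposition \ref{37pro} already routed us through \cite[Lemma 3]{Albert} to obtain power-associativity, citing Albert once more for the idempotent-lifting statement is the cleanest route, and the whole proof reduces to: "By Proposition \ref{37pro}, $\mathcal{A}$ is power-associative; a finite-dimensional power-associative algebra over $\mathbb{C}$ that is not a nilalgebra has a non-zero idempotent \cite{Albert}."

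\begin{proof}
By Proposition \ref{37pro}, the algebra $(\mathcal{A},\cdot)$ is power-associative. Assume $\mathcal{A}$ is not a nilalgebra, so there is an element $a\in\mathcal{A}$ which is not nilpotent. By power-associativity, the subalgebra $\mathcal{B}$ generated by $a$ is commutative and associative, and it is spanned by the powers $a,a^{2},a^{3},\dots$; hence $\mathcal{B}\cong\mathbb{C}[t]/(p(t))$ for some non-zero polynomial $p$ with $p(0)=0$. Write $p(t)=t^{k}q(t)$ with $q(0)\neq 0$; since $a$ is not nilpotent, $\deg q\geq 1$. By the Chinese Remainder Theorem,
\begin{equation*}
\mathcal{B}\ \cong\ \mathbb{C}[t]/(t^{k})\ \times\ \mathbb{C}[t]/(q(t)),
\end{equation*}
and the second factor is a non-zero unital algebra whose unit is a non-zero idempotent $e$ of $\mathcal{B}$, hence of $\mathcal{A}$. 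Contrapositively, if $\mathcal{A}$ contains no non-zero idempotent then every element of $\mathcal{A}$ is nilpotent, i.e.\ $\mathcal{A}$ is a nilalgebra. This proves the claim.
\end{proof}
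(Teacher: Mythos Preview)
Your argument is correct and follows exactly the paper's implicit reasoning: the corollary is stated there without proof, being immediate from Proposition~\ref{37pro} together with Albert's classical fact that a finite-dimensional power-associative algebra which is not a nilalgebra contains a non-zero idempotent. One small slip worth tidying: the non-unital subalgebra $\mathcal{B}=\langle a\rangle$ is not literally $\mathbb{C}[t]/(p(t))$ (the latter is unital of dimension $\deg p$, whereas $\dim\mathcal{B}=\deg p-1$); rather the unitization satisfies $\mathcal{B}^{1}\cong\mathbb{C}[t]/(p(t))$, and the idempotent $(0,1)$ in the Chinese Remainder splitting lands in the ideal $t\mathbb{C}[t]/(p(t))\cong\mathcal{B}$, so the conclusion is unaffected.
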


	\begin{proposition}
		Let $\left( \mathcal{A},\cdot \right) $ be an algebra satisfying~\eqref{id-delta-poisson} with $\delta \neq 0$, then
		\begin{enumerate}
			\item[\textrm{1.}] if $\left( \mathcal{A},\cdot \right) $ is commutative, then $(\mathcal{A},\cdot)$ is associative;
			\item[\textrm{2.}] if $\left( \mathcal{A},\cdot \right) $ has an identity element, then it is associative and commutative.
		\end{enumerate}
	\end{proposition}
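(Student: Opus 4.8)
The plan is to work entirely with the single product $\cdot$ and the identity $f_\delta(x,y,z)=0$ from \eqref{id-delta-poisson}, specializing variables and combining instances to force associativity. For part (1), assuming $(\mathcal{A},\cdot)$ is commutative, every associator-type combination in $f_\delta$ collapses because $xy=yx$; so I would first substitute commutativity into $f_\delta(x,y,z)=0$ and collect terms. Writing everything in terms of the products $(xy)z$, $(yz)x$, $(xz)y$ and their "right" analogues — which under commutativity reduce to just $(xy)z$, $(yz)x$, $(zx)y$ — the identity $f_\delta(x,y,z)=0$ becomes a linear relation among these three cyclic terms with coefficients depending on $\delta$. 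I expect to get something like $3\delta(xy)z+(1-2\delta)(yz)x+\delta(xy)z-(2\delta+1)(xy)z-(xy)z+(yz)x=0$ after using commutativity to identify $y(zx)=(zx)y=(xz)y$ etc.; carefully bookkeeping the six right-product terms of $f_\delta$, they pair up and the relation should simplify to a statement equivalent to $(x,y,z)_\cdot = 0$, i.e. associativity. Concretely, I would compute $f_\delta(x,y,z)$, $f_\delta(y,x,z)$, $f_\delta(z,y,x)$ under commutativity and take the same linear combination that appeared in the proof of Proposition~\ref{delta} (the one expressing $(x,y,z)_\circ$), noting that in the commutative case $\circ = \cdot$, so $(x,y,z)_\circ = (x,y,z)_\cdot$; since that combination equals $\frac{1}{12\delta}\big(f_\delta(x,y,z)+f_\delta(y,x,z)-f_\delta(z,y,x)-f_\delta(y,z,x)\big)=0$, associativity follows immediately.

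For part (2), suppose $(\mathcal{A},\cdot)$ has a two-sided identity $1$. The first step is to show commutativity, after which part (1) finishes the job. I would substitute $z=1$ into $f_\delta(x,y,1)=0$: this gives $3\delta(xy)+(1-2\delta)y\cdot x-(2\delta+1)x\cdot y-xy+yx+\delta(xy)=0$ — wait, I must be careful, because $f_\delta$ mixes $x(zy)$ with $x(yz)$ and so on. Setting $z=1$, the terms become: $3\delta(xy)\cdot 1 = 3\delta xy$, $(1-2\delta)y(x) = (1-2\delta)yx$, $-(2\delta+1)x(y)=-(2\delta+1)xy$, $-x(y) = -xy$ (from $-x(zy)$ with $z=1$), $+y(x)=yx$ (from $+y(xz)$), $+\delta\,1\cdot(xy)=\delta xy$. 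Collecting: $(3\delta-2\delta-1+\delta)xy + (1-2\delta+1)yx = 0$, i.e. $2(\delta-1)\cdot\frac{?}{}$ — let me recompute: the $xy$ coefficient is $3\delta-(2\delta+1)-1+\delta = 2\delta-2$, and the $yx$ coefficient is $(1-2\delta)+1 = 2-2\delta$. So $(2\delta-2)xy + (2-2\delta)yx = 2(\delta-1)(xy-yx)=0$. Since $\delta\neq 0$ is not assumed to exclude $\delta = 1$ here, this is the one place I must check: if $\delta\neq 1$ we immediately get $xy=yx$, hence commutativity, hence associativity by part (1). For the remaining case $\delta=1$ the identity $f_1$ is (a multiple of) the defining identity of ordinary Poisson algebras in one operation, and a unital such algebra is classically associative-commutative; alternatively, one can invoke Proposition~\ref{37pro} (power-associativity) together with a separate substitution ($x=1$ or $y=1$ in $f_\delta$) to pin down commutativity directly.

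The only genuine obstacle I anticipate is the $\delta=1$ boundary case in part (2) and the careful term-by-term bookkeeping of the six summands of $f_\delta$ under the substitutions — these are routine but error-prone. Everything else is linear algebra in the multiplication table of $\mathcal{A}$: choose the right specializations of $f_\delta$ (namely $z=1$, and the cyclic permutations used in Proposition~\ref{delta}), take an explicit linear combination, and read off $(x,y,z)_\cdot=0$. I would present part (2) by first deriving $2(\delta-1)(xy-yx)=0$ from $f_\delta(x,y,1)=0$, then splitting into $\delta\neq1$ (done via part (1)) and $\delta=1$ (handled directly or by citing the known Poisson case), and in both cases concluding associativity and commutativity.
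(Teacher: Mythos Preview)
Your proposal is correct, and for part~(1) it is arguably cleaner than the paper's argument. The paper instead simplifies $f_\delta$ directly under commutativity to obtain $2\delta(xy)z+(1-\delta)(xz)y-(\delta+1)(yz)x=0$, adds this to its $(x\leftrightarrow y)$-swap to get $2(xy)z=(xz)y+(yz)x$, and then iterates once more to isolate the associator. Your route---observing that $\circ=\cdot$ in the commutative case and invoking the already-established identity $(x,y,z)_\circ=\frac{1}{12\delta}\big(f_\delta(x,y,z)+f_\delta(y,x,z)-f_\delta(z,y,x)-f_\delta(y,z,x)\big)=0$ from Proposition~\ref{delta}---recycles that computation and avoids any new bookkeeping. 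The paper's version has the mild advantage of being self-contained, while yours makes the logical dependence on Proposition~\ref{delta} explicit.

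For part~(2) you do exactly what the paper does (substitute $z=1$), and your term-by-term computation yielding $2(\delta-1)(xy-yx)=0$ is accurate. You are right to flag that this substitution is vacuous when $\delta=1$: the paper's proof simply asserts that $z=1$ ``gives that $(\mathcal{A},\cdot)$ is commutative'' without isolating this boundary case, relying implicitly on the standing convention (stated in the Introduction) that $\delta\neq1$ almost everywhere. Your plan to dispatch $\delta=1$ by citing the classical Goze--Remm result for unital Poisson algebras in one multiplication is the correct fix and makes the argument complete for all $\delta\neq0$ as the proposition is stated.
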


	\begin{proof}
		If $\left( \mathcal{A},\cdot \right)$ is commutative, then the identity~\eqref{id-delta-poisson} gives
		\begin{center}
			$2\delta (xy)z +(1-\delta) (xz)y-(\delta+1) (yz)x \ = \ 0$,
		\end{center}
		which is equivalent to
		\begin{center}
			$2\delta (xy)z +(1-\delta) (yz)x-(\delta+1) (xz)y \ = \ 0$.
		\end{center}
		Summarizing the last two identities, we have
		$2(xy)z=(xz)y+x(yz)$, which gives
		\begin{center}
			$4(xy)z=2(xz)y+2x(yz)=(xy)z+x(zy)+2x(yz)$, i.e. $(xy)z=x(yz)$.
		\end{center}
		For the second part of our statement, we suppose that $\left( \mathcal{A},\cdot \right)$ has an identity element and take $z=1$ in~\eqref{id-delta-poisson}. It gives that $\left( \mathcal{A},\cdot \right)$ is commutative, hence associative. The statement is proved.
	\end{proof}

	\begin{definition}
		An algebra $(\mathcal{A}, \cdot)$ is defined to be flexible if the following identity holds
		\begin{eqnarray}\label{flexible law}
			\textrm{FL} (x,y,z) \:= \ \left( x,y,z\right) + \left( z,y,x\right) \ = \ 0.
		\end{eqnarray}
		An algebra $(\mathcal{A}, \cdot)$ is defined to be anti-flexible if the following identity holds
		\begin{eqnarray*}
			\textrm{AFL} (x,y,z) \:= \ \left( x,y,z\right) - \left( z,y,x\right) \ = \ 0.
		\end{eqnarray*}
	\end{definition}

	\begin{proposition}\label{ass-shift}
		Let $(\mathcal{A}, \cdot)$ be an algebra satisfying~\eqref{id-delta-poisson} with $\delta \notin \big\{ 0,\pm 1 \big\}$. Then $(\mathcal{A}, \cdot)$ is associative if and only if $(\mathcal{A},\cdot)$ is a~shift associative algebra.
	\end{proposition}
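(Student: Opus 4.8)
The plan is to work entirely at the level of the identity $f_\delta(x,y,z)=0$ together with the definitions of associativity and shift-associativity $(xy)z=y(zx)$, treating both as multilinear identities and combining their permuted instances. First I would recall that if $(\mathcal{A},\cdot)$ is associative then all associators vanish, so $f_\delta$ collapses to a relation purely among the products $x(yz)$ in various bracketings; more precisely, using $(xy)z=x(yz)$ I would rewrite $f_\delta(x,y,z)=0$ as
\begin{equation*}
3\delta\,x(yz)+(1-2\delta)\,y(zx)-(2\delta+1)\,x(yz)-x(zy)+y(xz)+\delta\,z(xy)\ =\ 0,
\end{equation*}
i.e. $(\delta-1)\,x(yz)+(1-2\delta)\,y(zx)-x(zy)+y(xz)+\delta\,z(xy)=0$. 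The goal is then to show this system of permuted relations, under associativity, is equivalent to $x(yz)=y(zx)$, and conversely that shift-associativity plus associativity forces $f_\delta\equiv 0$.

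For the forward direction (associative $\Rightarrow$ shift-associative), the key step is to take the relation above and apply the six permutations of $(x,y,z)$, obtaining six linear equations in the six "cyclic-type" monomials $x(yz), y(zx), z(xy), x(zy), z(yx), y(xz)$. Since $\delta\neq 0,\pm1$ the coefficient matrix should have the right rank to pin these monomials down to the shift-associative pattern; concretely I expect to derive $x(yz)=y(zx)=z(xy)$ and $x(zy)=z(yx)=y(xz)$ first, and then a further combination using $\delta\neq\pm1$ to collapse these to the single shift-associative identity $x(yz)=y(zx)$ (note shift-associativity $(xy)z=y(zx)$ rewritten via associativity is exactly $x(yz)=y(zx)$, which is the statement that the "left-multiplication" action is invariant under cyclic rotation). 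For the converse (shift-associative $\Rightarrow$ associative and $f_\delta\equiv0$), I would first invoke Theorem~\ref{Jor-admiss}: a shift-associative algebra is anti-Poisson-Jordan under $(\circ,[\cdot,\cdot])$, but more directly I would check that any shift-associative algebra is already associative — indeed $(xy)z=y(zx)$ applied twice gives $(xy)z=y(zx)=z(xy)\cdot$-type rotations, and one shows $((xy)z)=x(yz)$ by a short manipulation — so that shift-associative algebras are a subvariety of associative algebras, and then it suffices to verify that the shift-associative identity implies the associative form of $f_\delta$ displayed above, which is a routine substitution.

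The main obstacle I anticipate is the bookkeeping in the forward direction: ensuring the $6\times 6$ (or effectively smaller) linear system over $\mathbb{C}(\delta)$ really does have rank forcing the shift pattern, and isolating exactly where each of the hypotheses $\delta\neq 0$, $\delta\neq 1$, $\delta\neq -1$ is used. A clean way to organize this is to symmetrize: adding the relation to its transpose under $y\leftrightarrow z$ should immediately give a relation among the symmetric monomials, and the difference should give one among antisymmetric monomials; combining these with cyclic permutations is where the $(\delta-1)$ and $(\delta+1)$ factors will appear as invertible scalars. Once the monomials are controlled, deducing $x(yz)=y(zx)$ and separately recognizing this as shift-associativity is immediate. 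I would present the forward direction as the substantive computation and dispatch the converse in a sentence or two by reduction to a direct check.
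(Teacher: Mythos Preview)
Your plan for the forward direction (associative $\Rightarrow$ shift associative) is sound in principle and matches the spirit of the paper's proof, which simply exhibits an explicit linear combination
\[
S(x,y,z)=\tfrac{1}{3\delta}f_\delta(x,y,z)+\tfrac{1}{3\delta(\delta-1)}f_\delta(x,z,y)-\tfrac{1}{3(\delta-1)}f_\delta(z,x,y)+\tfrac{1}{3\delta(\delta-1)}f_\delta(z,y,x)+\text{(associators)},
\]
so that $f_\delta\equiv 0$ and associativity give $S\equiv 0$. Your $6\times 6$ system would recover this, though writing the combination down directly is faster.

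The converse direction, however, contains a genuine error. You assert that ``any shift-associative algebra is already associative'' and propose to prove this without using the hypothesis $f_\delta\equiv 0$. This is false. In the free shift-associative algebra the twelve multilinear degree-$3$ monomials collapse only to six independent classes (e.g.\ $(xy)z=y(zx)$, $(zx)y=x(yz)$, etc.), and there is no relation forcing $(xy)z=x(yz)$; equivalently, shift-associativity identifies each left-bracketed product with a \emph{different} right-bracketed product, not with its own associative partner. So the free shift-associative algebra on three generators is already a counterexample. (This is also why the paper treats $\delta=-1$ separately: there, under $f_{-1}=0$, shift-associativity is only equivalent to anti-flexibility, not to associativity.)

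You must therefore use $f_\delta=0$ in the converse as well. The paper does this symmetrically, writing the associator as
\[
(x,y,z)=\tfrac{1+2\delta}{3\delta(\delta+1)}f_\delta(x,y,z)+\tfrac{1}{3\delta(\delta+1)}f_\delta(y,x,z)+\tfrac{1}{3(\delta+1)}f_\delta(z,x,y)-\tfrac{\delta}{\delta+1}S(x,y,z)-\tfrac{1}{\delta+1}S(y,x,z)-\tfrac{\delta}{\delta+1}S(z,x,y),
\]
so $f_\delta\equiv 0$ and $S\equiv 0$ force $(x,y,z)=0$. Note the denominator $\delta+1$ here: this is precisely where $\delta\neq -1$ enters, and your proposed argument never invokes it. Finally, you do not need to ``verify that the shift-associative identity implies $f_\delta\equiv 0$'': the identity $f_\delta\equiv 0$ is part of the standing hypothesis, not something to be derived.
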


	\begin{proof}
		The variety of shift associative algebra is defined by the following identity:
		\[
			S\left( x,y,z\right) \:=\ \left( xy\right) z-y\left( zx\right) \ =\ 0.
		\]
		Then it is straightforward to verify that:
		\begin{align*}
			S(x,y,z)
            ={ }&{ }\frac{\delta -1}{3\delta \left( \delta -1\right) }f_{\delta}(x,y,z) + \frac{1}{3\delta \left( \delta -1\right) }f_{\delta }(x,z,y) - \frac{\delta }{3\delta \left( \delta -1\right) }f_{\delta }(z,x,y)\\
            &{ }+ \frac{1}{3\delta \left( \delta -1\right) }f_{\delta }(z,y,x) + \frac{1}{1-\delta }(x,z,y) + \frac{\delta }{\delta -1}(z,x,y) + \frac{1}{1-\delta }(z,y,x),
            \\
			(x,y,z)
            ={ }&{ } \frac{1+2\delta }{3\delta \left( \delta +1\right) }f_{\delta}(x,y,z) + \frac{1}{3\delta \left( \delta +1\right) }f_{\delta }(y,x,z) + \frac{\delta }{3\delta \left( \delta +1\right) }f_{\delta }(z,x,y) \\
            &{ }-\frac{\delta }{\delta +1}S(x,y,z) - \frac{1}{1+\delta }S\left( y,x,z\right) - \frac{\delta }{1+\delta }S\left( z,x,y\right).
        \qedhere
		\end{align*}
	\end{proof}

	\begin{corollary}
		Let $(\mathcal{A}, \cdot)$ be an algebra satisfying~\eqref{id-delta-poisson} with $\delta \notin \big\{ 0,\pm 1\big\}$. Then $(\mathcal{A}, \cdot)$ is associative if and only if $(\mathcal{A}, \cdot)$ is a~cyclic associative algebra.
	\end{corollary}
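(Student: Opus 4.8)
The plan is to read this off from Proposition~\ref{ass-shift}. Recall that, in the terminology fixed earlier in the paper, a cyclic associative algebra is one satisfying $(xy)z=(yz)x=x(yz)$; in particular the associativity identity $(xy)z=x(yz)$ is already part of the definition, so every cyclic associative algebra is associative. Hence the ``if'' direction holds trivially, and it does not even require the hypothesis~(\ref{id-delta-poisson}).

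For the ``only if'' direction, suppose $(\mathcal A,\cdot)$ satisfies~(\ref{id-delta-poisson}) with $\delta\neq0,\pm1$ and is associative. By Proposition~\ref{ass-shift}, $(\mathcal A,\cdot)$ is then also shift associative, that is, $(xy)z=y(zx)$ for all $x,y,z$. Now apply associativity to the right-hand side in the form $y(zx)=(yz)x$; this yields $(xy)z=(yz)x$. Combining this with the associativity identity $(xy)z=x(yz)$ gives $(xy)z=(yz)x=x(yz)$, i.e.\ $(\mathcal A,\cdot)$ is cyclic associative. (If one instead reads ``cyclic associative'' as the single identity $(xy)z=(yz)x$, the converse direction is no longer automatic, but it follows by the same route: one shows via~(\ref{id-delta-poisson}) that the cyclic identity forces shift associativity, hence associativity.)

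In short, the only substantive input is Proposition~\ref{ass-shift}; once associativity and shift associativity hold simultaneously, the chain $(xy)z=y(zx)=(yz)x$ is immediate, and the remaining identity is plain associativity. There is therefore no real obstacle here --- the restriction $\delta\neq0,\pm1$ enters solely through the invocation of Proposition~\ref{ass-shift}, which is precisely what makes ``associative'' and ``shift associative'' coincide for algebras satisfying~(\ref{id-delta-poisson}).
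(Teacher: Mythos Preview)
Your proof is correct and is exactly the argument the paper intends: the corollary is stated without proof immediately after Proposition~\ref{ass-shift}, and your deduction --- associativity plus shift associativity give $(xy)z=y(zx)=(yz)x$, hence cyclic associativity --- is the natural one-line reading. The parenthetical about the alternative interpretation of ``cyclic associative'' is unnecessary here, since the paper fixes the definition $(xy)z=(yz)x=x(yz)$ explicitly.
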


	\begin{proposition}
		Let $(\mathcal{A}, \cdot)$ be an algebra satisfying~\eqref{id-delta-poisson} with $\delta =1$. If $
		(\mathcal{A}, \cdot)$ is a~shift associative algebra then it is associative and therefore cyclic associative.
	\end{proposition}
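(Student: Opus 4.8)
The plan is to recycle the associator formula already established in the proof of Proposition~\ref{ass-shift}. There it is verified, as a formal identity in the free algebra, that for every $\delta\neq 0,-1$
\begin{equation*}
(x,y,z)=\frac{1+2\delta}{3\delta(\delta+1)}\,f_{\delta}(x,y,z)+\frac{1}{3\delta(\delta+1)}\,f_{\delta}(y,x,z)+\frac{\delta}{3\delta(\delta+1)}\,f_{\delta}(z,x,y)-\frac{\delta}{\delta+1}\,S(x,y,z)-\frac{1}{1+\delta}\,S(y,x,z)-\frac{\delta}{1+\delta}\,S(z,x,y).
\end{equation*}
The key observation is that \emph{every coefficient of this identity is regular at $\delta=1$}; it is only the companion identity, the one expressing $S$ through $f_{\delta}$ and associators, that has a pole at $\delta=1$, and that is precisely why Proposition~\ref{ass-shift} had to exclude $\delta=\pm1$. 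Hence the identity above remains valid at $\delta=1$, where it reads
\begin{equation*}
(x,y,z)=\tfrac12 f_{1}(x,y,z)+\tfrac16 f_{1}(y,x,z)+\tfrac16 f_{1}(z,x,y)-\tfrac12\big(S(x,y,z)+S(y,x,z)+S(z,x,y)\big).
\end{equation*}

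Next I would invoke the hypotheses. Since $(\mathcal{A},\cdot)$ satisfies (\ref{id-delta-poisson}) with $\delta=1$, we have $f_{1}\equiv 0$; since $(\mathcal{A},\cdot)$ is shift associative, we have $S\equiv 0$. Substituting these into the displayed $\delta=1$ identity gives $(x,y,z)\equiv 0$, i.e.\ $(\mathcal{A},\cdot)$ is associative.

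Finally I would deduce cyclic associativity from associativity together with the shift law. Putting $x:=c$, $y:=a$, $z:=b$ in $(xy)z=y(zx)$ gives $a(bc)=(ca)b$ for all $a,b,c$; combined with associativity this yields $(ab)c=a(bc)=(ca)b$, and a cyclic relabelling of the equality $(ab)c=(ca)b$ produces $(ab)c=(bc)a$. Therefore $(xy)z=(yz)x=x(yz)$, which is exactly the defining identity of a cyclic associative algebra, completing the argument.

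There is no genuine obstacle here: the whole proof rests on the single point that the associator formula of Proposition~\ref{ass-shift} survives the specialization $\delta=1$ even though the equivalence ``associative $\Leftrightarrow$ shift associative'' does not — the converse implication really fails at $\delta=1$, since an associative algebra need not satisfy $(xy)z=y(zx)$, which is why the statement asserts only one direction. A self-contained alternative, avoiding any reference to Proposition~\ref{ass-shift}, is to expand $f_{1}=0$, use $S=0$ to rewrite each right-normed ternary monomial as a left-normed one, thereby obtaining a linear dependence among $(xy)z$ and its $S_{3}$-translates, and then solve the resulting linear system for $(xy)z-x(yz)$; this reconstructs the same coefficients but is more tedious.
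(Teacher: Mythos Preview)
Your proof is correct and coincides with the paper's: the displayed identity you obtain by specializing the associator formula of Proposition~\ref{ass-shift} to $\delta=1$ is exactly the identity the paper writes down directly, with the same coefficients $\tfrac12,\tfrac16,\tfrac16,-\tfrac12,-\tfrac12,-\tfrac12$. Your additional explicit derivation of cyclic associativity from associativity plus the shift law, and your remark on why only the one implication survives at $\delta=1$, are correct and fill in details the paper leaves implicit.
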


	\begin{proof}
		It is straightforward to verify that:
		\[
			(x,y,z)
            = \frac{1}{2}f_{1}(x,y,z) + \frac{1}{6}f_{1}(y,x,z) + \frac{1}{6}f_{1}(z,x,y) - \frac{1}{2}S(x,y,z) - \frac{1}{2}S\left( y,x,z\right) - \frac{1}{2}S\left( z,x,y\right).
		\]
	\end{proof}

	\begin{proposition}
		Let $(\mathcal{A}, \cdot)$ be an algebra satisfying~\eqref{id-delta-poisson} with $\delta =-1$. Then $(\mathcal{A}, \cdot)$ is a~shift associative algebra if and only if it is anti-flexible.
	\end{proposition}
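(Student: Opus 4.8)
My plan is to run the identity-juggling scheme of Proposition~\ref{ass-shift}: specialize $f_\delta$ at $\delta=-1$ and express the shift-associativity element $S(x,y,z):=(xy)z-y(zx)$ and the anti-flexibility element ${\rm AFL}(x,y,z)=(x,y,z)-(z,y,x)$ each as a relabelling of the other plus permuted copies of
\[
f_{-1}(x,y,z)\ =\ -3(xy)z+3y(zx)+x(yz)-x(zy)+y(xz)-z(xy).
\]
First I would write out the six permuted copies $f_{-1}(\sigma(x),\sigma(y),\sigma(z))$, $\sigma\in S_3$, and record the key structural fact: $f_{-1}$ contains a single left-nested monomial, namely $(xy)z$, so the six copies contain the six distinct left-nested degree-$3$ monomials, each with coefficient $-3$. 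Using these six relations I would rewrite every left-nested monomial as a fixed linear combination of the six right-nested monomials $x(yz),x(zy),y(xz),y(zx),z(xy),z(yx)$, thereby putting any multilinear degree-$3$ element into a right-nested normal form.

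Next I would reduce $S(x,y,z)$, ${\rm AFL}(x,y,z)$, and the relevant permutations of each to this normal form and compare. This is where $\delta=-1$ is actually used: one finds that ${\rm AFL}(x,y,z)$ and $2S(y,z,x)$ reduce to the same normal form, and that $S(x,y,z)$ and $-\tfrac12{\rm AFL}(y,x,z)$ reduce to the same normal form. Solving the two small linear systems for the coefficients then produces the identities
\[
{\rm AFL}(x,y,z)\ =\ 2S(y,z,x)-\tfrac13 f_{-1}(x,y,z)+\tfrac23 f_{-1}(y,z,x)+\tfrac13 f_{-1}(z,y,x),
\]
\[
S(x,y,z)\ =\ -\tfrac12{\rm AFL}(y,x,z)-\tfrac13 f_{-1}(x,y,z)-\tfrac16 f_{-1}(y,x,z)+\tfrac16 f_{-1}(z,x,y),
\]
each of which is then confirmed by a direct expansion of both sides into the twelve multilinear degree-$3$ monomials.

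Granting these two identities, the proposition follows at once. If $(\mathcal{A},\cdot)$ satisfies (\ref{id-delta-poisson}) with $\delta=-1$, then $f_{-1}$ vanishes on $\mathcal{A}$ under every substitution, so all the $f_{-1}(\sigma(\cdot))$ terms above are zero. If in addition $(\mathcal{A},\cdot)$ is shift associative, then $S$ --- hence $S(y,z,x)$ --- is identically zero, and the first identity forces ${\rm AFL}\equiv 0$, i.e. $(\mathcal{A},\cdot)$ is anti-flexible. Conversely, if $(\mathcal{A},\cdot)$ is anti-flexible, then ${\rm AFL}(y,x,z)$ is identically zero, and the second identity forces $S\equiv 0$, i.e. $(\mathcal{A},\cdot)$ is shift associative.

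The main obstacle is purely the bookkeeping --- keeping the twelve degree-$3$ monomials and the six permuted copies of $f_{-1}$ straight and not slipping a sign while solving for the coefficients. The observation that $f_{-1}$ has only one left-nested monomial is what removes the difficulty, turning the search for the two identities into a routine elimination. As a consistency check one notes that the $f_{-1}$-parts of the displayed identities are genuinely nonzero, since ${\rm AFL}(x,y,z)$ and $2S(y,z,x)$ involve different left-nested monomials; thus, in contrast with the case $\delta\neq0,\pm1$ treated in Proposition~\ref{ass-shift}, shift associativity at $\delta=-1$ does not force full associativity but only anti-flexibility.
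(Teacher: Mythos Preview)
Your proof is correct and follows essentially the same approach as the paper: express $S$ and ${\rm AFL}$ each as a permutation of the other plus a linear combination of permuted copies of $f_{-1}$, then conclude. Your specific identities differ from the paper's (the paper uses $-2S(y,x,z)$ in place of your $2S(y,z,x)$, and two ${\rm AFL}$ terms rather than your single $-\tfrac12{\rm AFL}(y,x,z)$), but both are valid solutions of the same underdetermined linear system, and your versions are arguably tidier.
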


	\begin{proof}
		It is straightforward to verify that
		\begin{longtable}{lcl}
			$S\left( x,y,z\right) $&$=$&$-\frac{1}{2}f_{-1}(x,y,z)+\frac{1}{6}f_{-1}(x,z,y)-
			\frac{1}{6}f_{-1}(y,z,x)+\frac{1}{6}f_{-1}(z,y,x)-$\\
			\multicolumn{3}{r}{$\frac{1}{2} \textrm{AFL}(x,y,z)+\frac{1}{2} \textrm{AFL}\left(
			x,z,y\right)$,} \\
			$ \textrm{AFL}(x,y,z)$&$=$&$-\frac{1}{3}f_{-1 }(x,y,z)-\frac{2}{3}f_{-1 }(y,x,z)+
			\frac{1}{3}f_{-1 }(z,y,x)-2S\left( y,x,z\right)$.
		\end{longtable}
	\end{proof}

	\begin{corollary}
		Let $(\mathcal{A},\cdot)$ be an algebra satisfying~\eqref{id-delta-poisson} with $\delta =-1$. If $(\mathcal{A},\cdot)$ is associative, then $(\mathcal{A},\cdot)$ is a~shift associative algebra and therefore a~cyclic associative.
	\end{corollary}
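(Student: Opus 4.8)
The plan is to obtain this as an immediate consequence of the preceding Proposition (the case $\delta=-1$), using only the observation that associativity is a very strong hypothesis. First I would record that every associative algebra is anti-flexible: since $(x,y,z)=0$ identically we also have $(z,y,x)=0$, so ${\rm AFL}(x,y,z)=(x,y,z)-(z,y,x)=0$. Thus $(\mathcal{A},\cdot)$ is an algebra satisfying (\ref{id-delta-poisson}) with $\delta=-1$ that happens to be anti-flexible.

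Next I would invoke the preceding Proposition in the direction ``anti-flexible $\Rightarrow$ shift associative'': under (\ref{id-delta-poisson}) with $\delta=-1$, anti-flexibility forces $S(x,y,z)=(xy)z-y(zx)=0$, so $(\mathcal{A},\cdot)$ is shift associative. (Concretely, this is the content of the first displayed expression of that Proposition's proof, which writes $S$ as a combination of instances of $f_{-1}$ and of ${\rm AFL}$, all of which now vanish.)

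Finally I would check that shift associative together with associative yields cyclic associative. Associativity already gives $(xy)z=x(yz)$; combining the shift identity $(xy)z=y(zx)$ with associativity gives $y(zx)=(yz)x$, hence $(xy)z=(yz)x$. Therefore $(xy)z=(yz)x=x(yz)$, which is precisely the defining identity of a cyclic associative algebra.

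I do not expect any real obstacle here; the only subtlety is that the Corollary following Proposition \ref{ass-shift} cannot be applied directly, since it requires $\delta\neq\pm1$, which is exactly why the reduction must be routed through the anti-flexibility characterisation, and the closing identity manipulation is a one-line computation.
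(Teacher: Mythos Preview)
Your proposal is correct and follows exactly the route the paper intends: the Corollary is stated without proof precisely because it is immediate from the preceding Proposition (associative $\Rightarrow$ anti-flexible $\Rightarrow$ shift associative), and your closing verification that shift associative plus associative gives cyclic associative is the expected one-line computation.
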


	Let us remember that each algebra satisfies the identity~\eqref{id-delta-poisson} for $\delta=1$ is flexible~\cite[Proposition 4]{GR}. The next statement gives a~different situation for an arbitrary $\delta$.

	\begin{proposition}
		If the variety of algebras defined by the identity~\eqref{id-delta-poisson} with $\delta \neq 0$
		is a~subvariety of flexible algebras, then $\delta=1$.
	\end{proposition}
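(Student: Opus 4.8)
The plan is to show the contrapositive in effect: assume the variety $\mathcal{V}_\delta$ defined by $f_\delta(x,y,z)=0$ lies inside the variety of flexible algebras, i.e. that $\mathrm{FL}(x,y,z)=(x,y,z)+(z,y,x)$ is a consequence of $f_\delta$, and deduce $\delta=1$. Since $f_\delta$ is a single multilinear identity of degree $3$, every consequence of it obtained without iterating is a linear combination of the six permuted copies $f_\delta(x_{\sigma(1)},x_{\sigma(2)},x_{\sigma(3)})$ over $\sigma\in S_3$; and because $\mathrm{FL}$ has degree $3$ as well, $\mathrm{FL}\in\mathcal{V}_\delta$ forces $\mathrm{FL}$ to be exactly such a linear combination in the free (nonassociative, noncommutative) algebra on three generators. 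So the first concrete step is to expand $f_\delta(x,y,z)$ in the basis of the twelve degree-$3$ monomials $\{(ab)c, a(bc): \{a,b,c\}=\{x,y,z\}\}$ of the multilinear component, and likewise expand $\mathrm{FL}(x,y,z)=(xy)z-x(yz)+(zy)x-z(yx)$.

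Next I would set up the ansatz $\mathrm{FL}(x,y,z)=\sum_{\sigma\in S_3}c_\sigma\, f_\delta(x_{\sigma(1)},x_{\sigma(2)},x_{\sigma(3)})$ with unknown scalars $c_\sigma$ depending on $\delta$, and read off the resulting linear system by matching coefficients of each of the twelve monomials. This gives twelve equations in the six unknowns $c_\sigma$; the compatibility (solvability) of this overdetermined system is itself a condition on $\delta$. The cleanest way to organize the bookkeeping is to split by ``association type'': for a fixed unordered content, the coefficient vector of $\mathrm{FL}$ in the two monomials $(ab)c$ and $a(bc)$ of left/right type, summed appropriately, must match the corresponding combination coming from the $f_\delta$'s. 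Carrying this out, I expect the system to be consistent precisely when $\delta=1$ (recovering the known fact that $f_1$-algebras are flexible, \cite[Proposition 4]{GR}), and inconsistent otherwise.

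An alternative, and possibly slicker, route avoids solving the system: exhibit a single concrete algebra in $\mathcal{V}_\delta$ that is not flexible whenever $\delta\neq1$. For instance one can try a two- or three-dimensional model — e.g. a nilpotent algebra with one generator $e_1$, $e_1e_1=e_2$, and a few more products chosen so that $f_\delta$ vanishes identically while $\mathrm{FL}(e_1,e_1,e_2)\neq0$. Since Proposition \ref{37pro} already tells us $\mathcal{V}_\delta$-algebras are power-associative for $\delta\neq0$, the non-flexibility must come from genuinely non-symmetric products, so the model needs at least three basis vectors; one would pick coefficients parametrically in $\delta$ and check that the failure of flexibility is nonzero exactly for $\delta\neq1$. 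Either route works; I would present the linear-algebra version since it is uniform and needs no guessing.

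The main obstacle is purely computational rather than conceptual: correctly expanding $f_\delta$ and its six permutations in the twelve-dimensional multilinear monomial space and solving (or proving inconsistent for $\delta\neq1$) the resulting linear system without sign or indexing errors. There is also a minor subtlety to dispatch first — one must argue that ``$\mathcal{V}_\delta\subseteq$ flexible algebras'' really does mean ``$\mathrm{FL}$ is a $T$-ideal consequence of $f_\delta$,'' and then that for a degree-$3$ target identity only the non-iterated (linear-span) consequences are relevant, since any substitution or multiplication would raise the degree. Once that reduction is in place, the rest is a finite check, and the punchline $\delta=1$ falls out of the consistency condition.
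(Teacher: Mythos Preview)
Your proposal is correct and follows essentially the same approach as the paper: the paper simply writes $\mathrm{FL}(x,y,z)=\sum_{i=1}^6\alpha_i f_\delta(\sigma_i\cdot(x,y,z))$, matches coefficients, and reads off $\alpha_1=\alpha_6=\tfrac13$, $\alpha_2=\alpha_3=\alpha_4=\alpha_5=0$, $\delta=1$. Your write-up is in fact more careful than the paper's in justifying the reduction to non-iterated consequences (the paper takes this for granted), but otherwise the argument is the same.
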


	\begin{proof}
		If the identity \eqref{flexible law} is a~consequence of~\eqref{id-delta-poisson}, then there exist $\alpha
		_{1},\ldots,\alpha _{6}$ such that:
		\begin{flushleft}
			$ \textrm{FL} (x,y,z) =$
		\end{flushleft}

		\begin{flushright}
			$\alpha _{1}f_{\delta
			}(x,y,z)+\alpha _{2}f_{\delta }(x,z,y)+\alpha _{3}f_{\delta }(y,x,z)+\alpha
			_{4}f_{\delta }(z,x,y)+\alpha _{5}f_{\delta }(y,z,x)+\alpha _{6}f_{\delta
			}(z,y,x)$.
		\end{flushright}
		Then $\alpha _{1}=\alpha _{6}=\frac{1}{3},\alpha _{2}=\alpha _{3}=\alpha
		_{4}=\alpha _{5}=0$ and $\delta =1$.
	\end{proof}

	\begin{corollary}
		Let $\left( \mathcal{A},\cdot \right) $ be an algebra. If $(\mathcal{A},\cdot)$
		satisfies the identity~\eqref{id-delta-poisson} with $\delta = 1$, then $(\mathcal{A},\cdot)$ is a~Kokoris algebra and therefore a~noncommutative Jordan algebra.
	\end{corollary}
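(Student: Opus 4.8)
The plan is to reduce the statement to the depolarization correspondence of \cite{aak} via Proposition~\ref{delta}. First I would specialize Proposition~\ref{delta} to $\delta=1$: since $(\mathcal{A},\cdot)$ satisfies (\ref{id-delta-poisson}) with $\delta=1$, the polarized triple $(\mathcal{A},\circ,[\cdot,\cdot])$ is a $1$-Poisson algebra, and a $1$-Poisson algebra is by definition an ordinary Poisson algebra. In particular it is a generic Poisson algebra, a Poisson algebra being precisely a generic Poisson algebra whose (a priori only anticommutative) bracket additionally satisfies the Jacobi identity.

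Next I would invoke the bijective correspondence of \cite{aak} between the variety of Kokoris algebras and the variety of generic Poisson algebras, realized by the polarization $x\cdot y\mapsto(x\circ y,[x,y])$ and its inverse depolarization. This correspondence labels an algebra $(\mathcal{A},\cdot)$ as Kokoris exactly when its polarization $(\mathcal{A},\circ,[\cdot,\cdot])$ is generic Poisson; by the previous paragraph the polarization of our algebra is generic Poisson, so $(\mathcal{A},\cdot)$ is a Kokoris algebra. The closing clause is then immediate: a generic Poisson algebra is in particular a generic Poisson--Jordan algebra (its commutative associative product is, a fortiori, Jordan), so applying the fixed linear depolarization to the inclusion generic Poisson $\subseteq$ generic Poisson--Jordan and using the second half of the correspondence of \cite{aak} yields Kokoris $\subseteq$ noncommutative Jordan. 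Alternatively one may simply note that $(\mathcal{A},\cdot)$ is flexible by \cite[Proposition~4]{GR} and power-associative by Proposition~\ref{37pro}, which together with (\ref{id-delta-poisson}) already force the noncommutative Jordan identity $(x^{2},y,x)=0$.

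The point requiring the most care is the exact wording of the correspondence in \cite{aak}: if it is recorded only as a bijection between isomorphism classes rather than as an equivalence of varieties, one must add the routine observation that polarization is a fixed bijective linear reparametrization defined on all algebras, so that ``$(\mathcal{A},\cdot)$ is Kokoris'' is genuinely equivalent to ``$(\mathcal{A},\circ,[\cdot,\cdot])$ is generic Poisson''; this equivalence is of the same nature as Proposition~\ref{delta} and is obtained by writing the defining identities of one variety as explicit linear combinations of (linearizations of) the defining identities of the other. If one wishes to avoid citing \cite{aak} at all, the self-contained substitute is a computation in the style of the displays in Proposition~\ref{delta}: express the associator $(x^{2},y,x)_{\cdot}$ — and, for the sharper Kokoris identity, the relevant associator expression built from $x^{2}$ and $y$ — as a fixed linear combination of the values of $f_{1}$ on the permutations of the arguments $x,x,y$, which vanishes once (\ref{id-delta-poisson}) holds, using power-associativity from Proposition~\ref{37pro} to control the symmetrizations over the repeated argument.
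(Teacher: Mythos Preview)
Your argument is correct and coincides with the paper's intended one: the corollary is stated without proof precisely because it is the immediate consequence of Proposition~\ref{delta} (for $\delta=1$, giving that $(\mathcal{A},\circ,[\cdot,\cdot])$ is Poisson, hence generic Poisson) together with the depolarization correspondence of \cite{aak} recalled at the start of Section~\ref{onemu}, with the ``therefore noncommutative Jordan'' clause coming from the inclusion Kokoris $\subseteq$ noncommutative Jordan. Your additional remarks on the nature of the correspondence and the alternative route via flexibility from \cite[Proposition~4]{GR} are sound but unnecessary here; note only that ``flexible $+$ power-associative'' alone does not force noncommutative Jordan in general, so in that alternative you really are using that $(\mathcal{A},\circ)$ is associative (from Proposition~\ref{delta}), not merely power-associativity of $(\mathcal{A},\cdot)$.
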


	\begin{definition}
		An algebra $(\mathrm{P},\cdot,\{\cdot,\cdot \})$ is defined to be generic $\delta$-Poisson (resp., nonassociative $\delta$-Poisson) algebra if $(\mathrm{P},\cdot )$ is a~commutative associative (resp., commutative) algebra, $(\mathrm{P},\{\cdot,\cdot \})$ is anticommutative (resp., Lie) and the identity~\eqref{deltpois} holds.
	\end{definition}

	It is clear that the variety of $\delta$-Poisson algebras is the intersection of the variety of generic $\delta$-Poisson algebras and the variety of nonassociative $\delta$-Poisson algebras.

	\begin{proposition}
		Consider an algebra $\left( \mathcal{A},\cdot \right) $ whose underlying vector space $\mathcal{A}$ is endowed with the two new products $\circ $ and
		$\left[ \cdot,\cdot \right] $. Then $(\mathcal{A},\circ,
		\left[ \cdot,\cdot \right] )$ is a~$0$-Poisson algebra if and only if the algebra $\left( \mathcal{A},\cdot \right) $ satisfies the following identities$:$
		\begin{eqnarray}
			g_{1}\left( x,y,z\right) &:=& (xy)z+(zx)y-(zy)x+x(zy)-z(xy)-y(zx)\ =\ 0,
			\label{id1} \\
			g_{2}\left( x,y,z\right) &:=& (xz)y+(zx)y-z(xy)-z(yx)\ =\ 0. \label{id2}
		\end{eqnarray}
		Moreover,
		$(\mathcal{A},\circ,
		\left[ \cdot,\cdot \right] )$ is a~nonassociative $0$-Poisson algebra if and only if the algebra $\left( \mathcal{A},\cdot \right) $ satisfies $\eqref{id1}$, $(\mathcal{A},\circ,
		\left[ \cdot,\cdot \right] )$ is a~generic $0$-Poisson algebra if and only if the algebra $\left( \mathcal{A},\cdot \right) $ satisfies $\eqref{id2}$.
	\end{proposition}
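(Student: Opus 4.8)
The plan is to handle the three defining conditions of a $0$-Poisson algebra one at a time, converting each into a polynomial identity for $\left(\mathcal{A},\cdot\right)$ exactly as in the proof of Proposition~\ref{delta}, and then to finish with a finite linear-algebra comparison. First observe that $x\circ y$ is automatically commutative and $[x,y]$ automatically anticommutative, so $(\mathcal{A},\circ,[\cdot,\cdot])$ is a $0$-Poisson algebra if and only if (a) $(\mathcal{A},\circ)$ is associative, (b) $(\mathcal{A},[\cdot,\cdot])$ obeys the Jacobi identity, and (c) $[x\circ y,z]=0$; a generic $0$-Poisson algebra is characterized by (a) and (c) alone, while a nonassociative $0$-Poisson algebra is characterized by (b) and (c) alone.

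Next, I would reuse the polarization identities $h_1,h_2,h_3^{\delta}$ from the proof of Proposition~\ref{delta}: condition (a) is $h_1(x,y,z)=0$, condition (b) is $h_2(x,y,z)=0$, and condition (c), obtained by setting $\delta=0$ in $h_3^{\delta}$ and clearing the factor $\tfrac14$, reads
\[
h_3^{0}(x,y,z):=(xy)z+(yx)z-z(xy)-z(yx)=0 .
\]
Thus everything reduces to proving two equivalences of systems of multilinear identities in three variables, namely $\{h_1=0,\ h_3^{0}=0\}\Longleftrightarrow\{g_2=0\}$ and $\{h_2=0,\ h_3^{0}=0\}\Longleftrightarrow\{g_1=0\}$. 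Granting these, the two ``moreover'' statements are immediate, and the main equivalence follows because $\{g_1=0,\ g_2=0\}$ holds exactly when $\{h_1=0,\ h_2=0,\ h_3^{0}=0\}$ holds (combine the two equivalences in both directions).

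To establish those two equivalences I would work in the free nonassociative algebra on $x,y,z$, whose multilinear degree-three component is spanned by the twelve monomials $(xy)z,(xz)y,\dots,z(yx)$. In each direction one writes down explicit $S_3$-permuted linear combinations: for the forward directions, express $g_2$ as a combination of $h_1$ and $h_3^{0}$ with permuted arguments, and $g_1$ likewise in terms of $h_2$ and $h_3^{0}$; for the backward directions, express each of $h_1,h_3^{0}$ (resp.\ $h_2,h_3^{0}$) as a combination of the $g_2$'s (resp.\ the $g_1$'s) over all permutations of $x,y,z$. Each such claim is a routine check on the twelve monomial coefficients, entirely analogous to the displayed manipulations in the proof of Proposition~\ref{delta}.

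The genuinely nontrivial point, and where I expect the work to concentrate, is the backward direction: recovering associativity of $\circ$ (that is, $h_1=0$) from $g_2=0$ by itself, and, mirror-wise, recovering the twelve-term Jacobi identity from $g_1=0$ alone. This is precisely the place where the $\delta\neq0$ reasoning of Proposition~\ref{delta} breaks down: there one divides by $\delta$ to extract $\circ$-associativity from the single identity $f_\delta$, which is unavailable at $\delta=0$, and this is exactly why two identities $g_1,g_2$ rather than one are needed here. Verifying that $h_1$ lies in the $S_3$-span of $g_2$ and that $h_2$ lies in the $S_3$-span of $g_1$ settles the matter and completes the proof.
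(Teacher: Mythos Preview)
Your proposal is correct and follows essentially the same route as the paper: reduce the three conditions to $h_1=0$, $h_2=0$, $h_3^{0}=0$, then establish the two equivalences $\{h_2,h_3^{0}\}\Leftrightarrow\{g_1\}$ and $\{h_1,h_3^{0}\}\Leftrightarrow\{g_2\}$ by exhibiting each side as $S_3$-permuted linear combinations of the other. The paper carries this out with explicit formulas (for instance $h_1(x,y,z)=g_2(y,z,x)-g_2(y,x,z)$ and $h_3^{0}(x,y,z)=g_1(x,y,z)+g_1(y,x,z)$), and additionally supplies two concrete $4$-dimensional algebras witnessing that $g_1$ and $g_2$ are independent; this independence is not part of the stated proposition, so its absence from your outline is not a gap.
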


	\begin{proof}
		As in the Proposition~\ref{delta}, the identity $\left( x\circ y\right) \circ z=x\circ \left( y\circ z\right) ${\ is equivalent to}
		\[
			h_{1}\left( x,y,z\right) \:=\ (xy)z+(yx)z-(zy)x-(yz)x-x(yz)-x(zy)+z(yx)+z(xy) \ =\ 0,
		\]
		while the Jacobi identity is equivalent to:
		\begin{longtable}{rclcrcrc}
			$h_{2}(x,y,z)$&$:=$&$
			(xy)z-(yx)z-(zy)x-(xz)y+(yz)x+(zx)y$\\
			&&$-x(yz)+y(xz)+z(yx)+x(zy)-y(zx)-z(xy)$&$=$&$0$.
		\end{longtable}
		Moreover, the identity $[x \circ y,z]=0$ gives:
		\[
			h_{3}^{0}(x,y,z)\:=\ \left( xy\right) z+\left( yx\right) z-z\left( xy\right) -z\left(
			yx\right) \ =\ 0.
		\]
		Now it is straightforward to verify that
		\begin{longtable}{lcl}
			$g_{1}\left( x,y,z\right)$&$ =$&$\frac{1}{2}h_{2}(x,y,z)+\frac{1}{2}h_{3}^{0}(x,y,z)+\frac{1}{2}h_{3}^{0}(x,z,y)-\frac{1}{2}h_{3}^{0}(y,z,x)$,\\

			$g_{2}\left( x,y,z\right)$&$ =$&$-\frac{1}{2}h_{1}(y,x,z)+\frac{1}{2}
			h_{3}^{0}(x,y,z)+\frac{1}{2}h_{3}^{0}(x,z,y)$,\\

			$h_{1}\left( x,y,z\right)$ &$=$&$g_{2}\left( y,z,x\right) -g_{2}\left(
			y,x,z\right)$, \\
			$h_{3}^{0}(x,y,z) $&$=$&$g_{2}\left( x,z,y\right)+g_{2}\left( y,x,z\right)-g_{2}\left( z,x,y\right)$,\\

			$h_{2}\left( x,y,z\right) $&$=$&$g_{1}\left( x,y,z\right) -g_{1}\left(
			x,z,y\right)$,\\
			$h_{3}^{0}\left( x,y,z\right) $&$=$&$g_{1}\left( x,y,z\right) +g_{1}\left(
			y,x,z\right)$.
		\end{longtable}
		It follows that $(\mathcal{A},\circ,
		\left[ \cdot,\cdot \right] )$ is a~nonassociative $0$-Poisson algebra if and only if the algebra $\left( \mathcal{A},\cdot \right) $ satisfies ($\ref{id1}$), $(\mathcal{A},\circ,
		\left[ \cdot,\cdot \right] )$ is a~generic $0$-Poisson algebra if and only if the algebra $\left( \mathcal{A},\cdot \right) $ satisfies ($\ref{id2}$) and $(\mathcal{A},\circ,
		\left[ \cdot,\cdot \right] )$ is a~$0$-Poisson algebra if and only if the algebra $\left( \mathcal{A},\cdot \right) $ satisfies ($\ref{id1}$) and ($\ref{id2}$).

		Now we consider the following algebras:
		\begin{longtable}{llllllllllll}
			$\mathcal{B}_1$&$:$&
			$e_{1}\cdot e_{1}=e_{2}\cdot e_{3}=e_{3}\cdot e_{2}=e_{4}$,&$
			e_{1}\cdot e_{2}=-e_{2}\cdot e_{1}=e_{2}$,&$
			e_{1}\cdot e_{3}=-e_{3}\cdot e_{1}=e_{1}$.\\
			$\mathcal{B}_2$&$:$&$
			e_{1}\cdot e_{1}= e_{2}\cdot e_{2}= e_{3}\cdot e_{3}=e_{3}$,&$
			e_{1}\cdot e_{2}=-e_{2}\cdot e_{1}=e_{3}$.
		\end{longtable}
		$\mathcal{B}_1$ satisfies ($\ref{id2}$) but it does not satisfy ($\ref{id1}$). $\mathcal{B}_2$ satisfies ($\ref{id1}$) but it does not satisfy ($\ref{id2}$). Hence, ($\ref{id1}$) and ($\ref{id2}$)
		are independent.
	\end{proof}

	The next statement can be proven in a~similar way to Proposition~\ref{37pro}.

	\begin{corollary}
		If $(\mathcal{A},\cdot)$
		satisfies the identity~\eqref{id2}, then $(\mathcal{A},\cdot)$ is a~power-associative algebra.
	\end{corollary}

	\subsection{Scalar-Poisson algebras in one multiplication}

	\begin{proposition}\label{sc}
		Consider an algebra $\left( \mathcal{A},\cdot \right) $ whose underlying vector space $\mathcal{A}$ is endowed with the two new products $\circ $ and
		$\left[ \cdot,\cdot \right] $. Then $(\mathcal{A},\circ,\left[ \cdot,\cdot \right] )$ is a~{scalar-Poisson algebra if and only if the }algebra $
		\left( \mathcal{A},\cdot \right) $ satisfies the following identities$:$
		\begin{eqnarray}
			A_{1}\left( x,y,z\right) \ &:=& \ 3\left( xz\right) y-2x\left( zy\right) +z\left(
			xy\right) -y\left( zx\right) -z\left( yx\right) \ =\ 0, \label{sc1} \\
			A_{2}\left( x,y,z\right) \ &:=& \ x(yz)+x(zy)-z(xy)-z(yx)\ = \ 0. \label{sc2}
		\end{eqnarray}
	\end{proposition}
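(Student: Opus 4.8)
The plan is to follow the same depolarization recipe that worked in Proposition \ref{delta} and in the $0$-Poisson case, now specialized to scalar-Poisson algebras. Recall that $(\mathcal{A},\circ,[\cdot,\cdot])$ is scalar-Poisson exactly when: (i) $(\mathcal{A},\circ)$ is associative, (ii) $(\mathcal{A},[\cdot,\cdot])$ is a Lie algebra, (iii) $[x\circ y,z]=0$, and (iv) $x\circ[y,z]+[x,z]\circ y=0$. Writing each of these four conditions out in terms of the single product $\cdot$ via the substitutions $x\circ y=\tfrac12(xy+yx)$ and $[x,y]=\tfrac12(xy-yx)$, one obtains multilinear identities in $\mathcal{A}$. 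Conditions (i), (ii), (iii) give precisely the expressions $h_1$, $h_2$, $h_3^0$ already recorded in the proof of the $0$-Poisson proposition; condition (iv) gives a new multilinear expression, call it $h_4(x,y,z)$, obtained from $(xy)z+(zx)y-z(xy)-z(yx)+x(yz)+x(zy)-\ldots$ after clearing denominators (the honest bookkeeping of eight terms).

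First I would carry out the forward direction: assuming $(\mathcal{A},\circ,[\cdot,\cdot])$ is scalar-Poisson, I would exhibit $A_1$ and $A_2$ as explicit $\mathbb{Q}$-linear combinations of $h_1,h_2,h_3^0,h_4$ evaluated at permutations of $(x,y,z)$, so that $A_1=A_2=0$ follows. The expression $A_2(x,y,z)=x(yz)+x(zy)-z(xy)-z(yx)$ is visibly a symmetrization of condition (iii)-type data, so it should come out as a short combination of $h_3^0$ (and possibly $h_4$) at a couple of permutations; $A_1$ is the genuinely asymmetric one and will require a longer combination involving $h_2$ and $h_4$ (and perhaps $h_1$). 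Then for the converse, assuming $(\mathcal{A},\cdot)$ satisfies \eqref{sc1} and \eqref{sc2}, I would invert: express each of $h_1,h_2,h_3^0,h_4$ (equivalently the associator $(x,y,z)_\circ$, the Jacobiator of $[\cdot,\cdot]$, the element $[x\circ y,z]$, and $x\circ[y,z]+[x,z]\circ y$) as $\mathbb{Q}$-linear combinations of $A_1,A_2$ at permutations of the arguments, concluding that all four vanish, hence $(\mathcal{A},\circ,[\cdot,\cdot])$ is scalar-Poisson. As in the earlier propositions this is ``straightforward to verify'' once the right coefficients are pinned down, and I would present the result as a displayed list of such identities.

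The main obstacle is purely computational: finding the correct coefficients in the two directions, i.e. solving the linear systems that express $\{A_1,A_2\}$ in the span of the $S_3$-orbit of $\{h_1,h_2,h_3^0,h_4\}$ and conversely. The subtlety is that scalar-Poisson sits at the intersection of all $\delta$-Poisson varieties, so the defining identities \eqref{sc1}–\eqref{sc2} must be ``independent of $\delta$''; in particular one should sanity-check that $A_1$ and $A_2$ are consistent with (indeed, jointly equivalent to the $\delta$-free part of) the identity $f_\delta$ of Proposition \ref{delta}, and with $g_1,g_2$ of the $0$-Poisson case, since scalar-Poisson $\subseteq$ $0$-Poisson forces $\{A_1,A_2\}$ to span (at least) the same ideal as $\{g_1,g_2\}$ plus the extra relation coming from condition (iv). A secondary point worth a sentence is nondegeneracy: one may want to remark (or supply small examples, in the style of $\mathcal{B}_1,\mathcal{B}_2$ above) that \eqref{sc1} and \eqref{sc2} are genuinely independent, so that neither alone characterizes a one-sided version (scalar-Poisson-Jordan or the like); but strictly this is not needed for the stated equivalence and I would relegate it to a remark.
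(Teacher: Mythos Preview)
Your proposal is correct and follows essentially the same approach as the paper: the paper writes out the four depolarized conditions (calling your $h_4$ by the name $E$), displays $A_1,A_2$ as explicit $\mathbb{Q}$-linear combinations of $h_1,h_2,h_3^0,E$ at permuted arguments, then inverts to express $h_1,h_2,h_3^0,E$ in terms of $A_1,A_2$, and finishes with two $3$-dimensional examples $\mathcal{B}_1,\mathcal{B}_2$ showing \eqref{sc1} and \eqref{sc2} are independent. One small correction to your heuristic: $A_2$ turns out to be a combination of $h_1$ and $h_3^0$ (not $h_4$), while $A_1$ is the one that genuinely needs $E$.
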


	\begin{proof}
		Suppose first that $(\mathcal{A},\circ,\left[ \cdot,\cdot \right] )$ is a~scalar-Poisson algebra. As in the Proposition~\ref{delta}, the identity $\left( x\circ y\right) \circ z=x\circ \left( y\circ z\right)$ is equivalent to
		\[
			h_{1}\left( x,y,z\right) \:=\ (xy)z+(yx)z-(zy)x-(yz)x-x(yz)-x(zy)+z(yx)+z(xy) \ =\ 0,
		\]
		while the Jacobi identity is equivalent to:
		\begin{longtable}{rclcrcrc}
			$h_{2}(x,y,z)$&$:=$&$
			(xy)z-(yx)z-(zy)x-(xz)y+(yz)x+(zx)y$\\
			&&$-x(yz)+y(xz)+z(yx)+x(zy)-y(zx)-z(xy)$&$=$&$0$.
		\end{longtable}
		Moreover, the identity $[x \circ y,z]=0$ gives:
		\[
			h_{3}^{0}(x,y,z)\:=\ \left( xy\right) z+\left( yx\right) z-z\left( xy\right) -z\left(
			yx\right) \ =\ 0
		\]
		and the {identity }$x \circ [y,z]+[x,z]\circ y=0$ gives:
		\[
			E(x,y,z)
            := x\left( yz\right) -x\left( zy\right) +\left( yz\right) x-\left( zy\right) x+\left( xz\right) y-\left( zx\right) y+y\left( xz\right) -y\left(
			zx\right)
            = 0.
		\]
		Now it is straightforward to verify that:
		\begin{longtable}{lclcl}
			$A_{1}\left( x,y,z\right) $&$=$&$\frac{1}{2}h_{1}(x,z,y)-\frac{1}{2}
			h_{2}(x,y,z)+h_{3}^{0}(x,z,y)+$\\
			\multicolumn{3}{r}{$\frac{1}{2}h_{3}^{0}(y,z,x)-E(x,y,z)-\frac{1}{2}E(x,z,y)$}&$ = $&$0$, \\
			$A_{2}\left( x,y,z\right) $&$=$&$-\frac{1}{2}h_{1}(x,y,z)+\frac{1}{2}h_{3}^{0}(x,y,z)-\frac{1
			}{2}h_{3}^{0}(y,z,x)$&$ =$&$ 0$.
		\end{longtable}
		Conversely, let $(\mathcal{A},\cdot)$ satisfies the identities~\eqref{sc1} and ($~\ref{sc2}$). Then
		\begin{align*}
			h_{1}(x,y,z)
            ={ }&{ }\frac{1}{3} \big(A_{1}(x,z,y)-A_{1}(y,x,z)-A_{1}(z,x,y)+ A_{1}(y,z,x) \big)-A_{2}(x,y,z) = 0, \\
			h_{2}(x,y,z)
            ={ }&{ }\frac{1}{3}\big(A_{1}(x,z,y)-A_{1}(x,y,z)+
			A_{1}(y,x,z) \\
			&{\quad}-A_{1}(z,x,y)-A_{1}(y,z,x)+A_{1}(z,y,x) \big) = 0, \\
			h_{3}^{0}(x,y,z)
            ={ }&{ }\frac{1}{3} \big(A_{1}(x,z,y)+A_{1}(y,z,x)+2A_{2}(x,y,z)-A_{2}(x,z,y)\big) = 0, \\
			E(x,y,z)
            ={ }&{ }\frac{1}{3} \big(A_{1}(x,y,z)+A_{1}(y,x,z)-
			A_{1}(z,x,y) \\
			&{\quad}-A_{1}(z,y,x)+2A_{2}(x,y,z)-
			A_{2}(x,z,y) \big) = 0.
		\end{align*}
		Now we consider the following algebras:
		\begin{longtable}{llllllllllllllll}
			$\mathcal{B}_1$ &$:$&$ e_{1}\cdot e_{1}=e_{1}$,&
			$e_{2}\cdot e_{2}=e_{2}$,&
			$e_{1}\cdot e_{3}= \frac{1}{3}
			e_{3}$,&
			$e_{3}\cdot e_{1}= \frac{2}{3} e_{3}$.\\

			$\mathcal{B}_2$ &$:$&$ e_{1}\cdot e_{1}=e_{1}$,&
			$e_{2}\cdot e_{2}=e_{2}$,&
			$e_{1}\cdot e_{3}= \frac{2}{3}
			e_{3}$,&
			$e_{3}\cdot e_{1}= \frac{1}{3} e_{3}$.\\
		\end{longtable}
		$\mathcal{B}_1$ satisfies \eqref{sc1} but it does not satisfy \eqref{sc2}.
		$\mathcal{B}_2$
		satisfies \eqref{sc2} but it does not satisfy \eqref{sc1}. Therefore, the identities \eqref{sc1} and \eqref{sc2} are independent.
	\end{proof}

	\begin{proposition}
		Let $\left( \mathcal{A},\cdot \right) $ be an algebra satisfying~\eqref{sc1}, then $(\mathcal{A},\cdot)$ is a~flexible Lie-Jordan admissible algebra. In particular, $(\mathcal{A},\cdot)$ is a~noncommutative Jordan algebra and therefore power-associative.
	\end{proposition}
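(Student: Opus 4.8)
The plan is to read off from the identity $A_{1}=0$ of \eqref{sc1} the three ingredients \emph{flexible}, \emph{Lie\mbox{-}admissible}, \emph{Jordan\mbox{-}admissible}, and then to invoke the classical theory of noncommutative Jordan algebras. Since $\circ$ is commutative by construction, Jordan\mbox{-}admissibility of $(\mathcal{A},\cdot)$ means exactly that $(\mathcal{A},\circ)$ is a Jordan algebra, and a flexible algebra whose plus\mbox{-}algebra is Jordan is a noncommutative Jordan algebra, which in turn is power\mbox{-}associative (see \cite{aak,Albert}). Hence the ``in particular'' clause is immediate once the three admissibility statements are in hand, and the only real content is to prove those three.

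Flexibility is a short degree\mbox{-}three computation: in the free nonassociative algebra on $x,y,z$ one verifies that
\begin{equation*}
{\rm FL}(x,y,z)\ =\ \tfrac{1}{3}\,A_{1}(x,z,y)+\tfrac{1}{3}\,A_{1}(z,x,y),
\end{equation*}
so \eqref{sc1} forces ${\rm FL}=0$. Lie\mbox{-}admissibility is essentially already available: the Jacobi expression $h_{2}(x,y,z)$ for the commutator $[\cdot,\cdot]$ was written, in the proof of Proposition~\ref{sc}, as a linear combination of the six permuted instances of $A_{1}$ with no occurrence of $A_{2}$, so $(\mathcal{A},[\cdot,\cdot])$ is a Lie algebra. (By contrast the formulas there for $h_{1}$ and for $E$ do involve $A_{2}$, which is exactly why \eqref{sc1} alone makes $(\mathcal{A},\circ)$ Jordan without making it associative.)

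The substantive step is Jordan\mbox{-}admissibility. Here one linearizes the noncommutative Jordan identity $(x^{2},y,x)=0$ (equivalently $(xy)x^{2}=x(yx^{2})$) into its multilinear degree\mbox{-}four form and exhibits it as a $\mathbb{C}$\mbox{-}linear combination of the degree\mbox{-}four consequences of \eqref{sc1}: the substituted instances $A_{1}(uv,w,t)$, $A_{1}(u,vw,t)$, $A_{1}(u,v,wt)$, the one\mbox{-}sided multiples $w\,A_{1}(u,v,t)$ and $A_{1}(u,v,t)\,w$, together with the flexibility relation already established (and its linearizations). Setting up and solving this linear system inside the multilinear degree\mbox{-}four component of the free algebra is the main obstacle; the rest is bookkeeping. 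Once it is carried out, $(\mathcal{A},\cdot)$ is flexible with $(\mathcal{A},\circ)$ Jordan, hence a noncommutative Jordan algebra, and therefore power\mbox{-}associative.
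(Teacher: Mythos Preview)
Your overall strategy matches the paper's: establish flexibility, Lie-admissibility, and Jordan-admissibility separately from $A_{1}=0$, then quote the standard implications. Your treatment of flexibility (via the identity ${\rm FL}(x,y,z)=\tfrac{1}{3}A_{1}(x,z,y)+\tfrac{1}{3}A_{1}(z,x,y)$, which checks out) and of Lie-admissibility (reusing the $A_{2}$-free expression for $h_{2}$ from Proposition~\ref{sc}) is correct; the paper handles flexibility by the even shorter move of setting $y=x$ in $A_{1}(x,y,z)$ to get $(x,z,x)=0$, but either works.

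The gap is in Jordan-admissibility. You correctly identify the mechanism---express the noncommutative Jordan identity as a combination of degree-four consequences of $A_{1}$---but you explicitly leave the linear-algebra step undone (``the main obstacle\dots once it is carried out''). That computation \emph{is} the content of this part of the proof, and without it nothing has been shown. The paper closes this gap with concrete combinations, and it does so in the non-linearized setting (variables $x,x,x,y$), which keeps the bookkeeping manageable. Specifically, expanding each of
\[
\tfrac{1}{3}A_{1}(x,xx,y)+\tfrac{1}{3}A_{1}(x,y,xx)-\tfrac{1}{3}A_{1}(xx,y,x)-\tfrac{1}{3}A_{1}(xx,x,y),\quad
2A_{1}(y,x,x)\,x-2A_{1}(yx,x,x)-A_{1}(y,xx,x)+A_{1}(y,x,xx),\quad
\tfrac{4}{3}\,x\,A_{1}(x,x,y)
\]
and summing the three resulting identities yields exactly
\[
(xx)(xy)+(xx)(yx)+(xy)(xx)+(yx)(xx)=x\big((xx)y\big)+x\big(y(xx)\big)+\big((xx)y\big)x+\big(y(xx)\big)x,
\]
which is $(x\circ x)\circ(x\circ y)=x\circ\big((x\circ x)\circ y\big)$. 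Working in the non-multilinear form like this is more efficient than the fully multilinear linear system you propose; either way, the explicit coefficients must actually be supplied.
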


	\begin{proof}
		In~\eqref{sc1}, if we set $y=x$, then we get $(x,z,x)=0$ and so $(\mathcal{A},\cdot)$ is flexible. Also, from the proof of Proposition~\ref{sc}, we have
		\begin{align*}
			[[x, y], { }&{ } z] + \left[ \left[ y,z\right],x\right] + \left[ \left[ z,x\right],y\right]\\
            &= \frac{1}{3} \big(A_{1}(x,z,y)- A_{1}(x,y,z)+
			A_{1}(y,x,z) - A_{1}(z,x,y)- A_{1}(y,z,x)+ A_{1}(z,y,x) \big) \\
            &= 0.
		\end{align*}
		So $(\mathcal{A},\cdot)$ is Lie admissible. It remains to show that $(\mathcal{A},\cdot)$ is Jordan admissible. To do this, expand each of the following:
		\begin{longtable}{rcl}
			$\frac{1}{3}A_{1}(x,xx,y)+\frac{1}{3}A_{1}(x,y,xx)-\frac{1}{3}A_{1}(xx,y,x)-
			\frac{1}{3}A_{1}(xx,x,y) $&$=$&$0$, \\
			$2A_{1}(y,x,x)x-2A_{1}(yx,x,x)-A_{1}(y,xx,x)+A_{1}(y,x,xx) $&$=$&$0$, \\
			$\frac{4}{3}xA_{1}(x,x,y) $&$=$&$0$.\\
			\multicolumn{3}{l}{\mbox{Then we get}}\\
			$\left( xx\right) \left( xy\right) +\left( xy\right) \left( xx\right) -\left(
			\left( xx\right) y\right) x-x\left( \left( xx\right) y\right) $&$=$&$0$, \\
			$\left( xx\right) \left( yx\right) +\left( yx\right) \left( xx\right) -\left(
			y\left( xx\right) \right) x-4\left( x\left( xy\right) \right) x+4x\left(
			x\left( yx\right) \right) -x\left( y\left( xx\right) \right) $&$=$&$0$, \\
			$4x\left( \left( xy\right) x\right) -4x\left( x\left( yx\right) \right) $&$=$&$0$.
		\end{longtable}
		Taking the sum of the three identities above gives
		\[
			\left( xx\right) \left( xy\right) +\left( xx\right) \left( yx\right) +\left(
			xy\right) \left( xx\right) +\left( yx\right) \left( xx\right) =x\left(
			\left( xx\right) y\right) +x\left( y\left( xx\right) \right) +\left( \left(
			xx\right) y\right) x+\left( y\left( xx\right) \right) x.
		\]
		This implies that $\left( x\circ x\right) \circ \left( x\circ y\right)
		=x\circ \big( \left( x\circ x\right) \circ y\big) $ and therefore $(\mathcal{A},\cdot)$ is Jordan admissible.
	\end{proof}

	\begin{corollary}
		Let $\left( \mathcal{A},\cdot \right) $ be an algebra satisfying~\eqref{sc1} or~\eqref{sc2}, then
		\begin{enumerate}
			\item[\textrm{1.}] if $\left( \mathcal{A},\cdot \right)$
			is commutative, then $(\mathcal{A},\cdot)$ is associative;
			\item[\textrm{2.}] if $\left( \mathcal{A},\cdot \right)$ is unital, then $(\mathcal{A},\cdot)$ is commutative and associative.
		\end{enumerate}
	\end{corollary}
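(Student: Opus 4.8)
The plan is to handle the two hypotheses (\ref{sc1}) and (\ref{sc2}) in parallel, reducing part (2) to part (1) and proving part (1) by collapsing the defining identity to (a scalar multiple of) the associator, exactly as was done for the identity (\ref{id-delta-poisson}).

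For part (1), suppose $(\mathcal A,\cdot)$ is commutative. Commutativity lets us rewrite $y(zx)=(xz)y$ and $z(yx)=z(xy)$ inside $A_1(x,y,z)$; the two copies of $z(xy)$ cancel and one is left with $A_1(x,y,z)=2\big((xz)y-x(zy)\big)=2(x,z,y)$, so (\ref{sc1}) forces $(x,z,y)=0$ for all $x,y,z$, i.e. $\mathcal A$ is associative. The same manipulation applied to $A_2$ gives, in the commutative case, $A_2(x,y,z)=2x(yz)-2(xy)z=-2(x,y,z)$, so (\ref{sc2}) likewise forces associativity. One could instead quote the analogous conclusion already recorded for algebras satisfying (\ref{id-delta-poisson}), but the direct computation is quicker.

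For part (2), suppose $\mathcal A$ has an identity element $1$. The key specialization is to set the \emph{middle} argument equal to $1$: substituting $y=1$ in (\ref{sc1}) makes all five terms collapse to $xz-zx$, and substituting $y=1$ in (\ref{sc2}) yields $2xz-2zx$; in either case the hypothesis gives $xz=zx$, so $\mathcal A$ is commutative, and part (1) then shows it is associative. The whole argument is purely computational; the only points requiring care are the bookkeeping of cancellations under commutativity and the choice of the correct substitution ($y=1$, not $x=1$ or $z=1$) in the unital case, and no genuine obstacle is expected.
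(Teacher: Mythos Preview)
Your argument is correct. The paper states this corollary without proof, presumably regarding it as an immediate computation in the spirit of the earlier Proposition on identity (\ref{id-delta-poisson}); your direct collapse of $A_1$ and $A_2$ under commutativity to $2(x,z,y)$ and $-2(x,y,z)$, together with the substitution $y=1$ in the unital case, supplies exactly those details and matches what the paper leaves implicit.
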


	\begin{proposition}
		Let $( \mathcal{A},\cdot )$ satisfies the identities $\eqref{sc1}$ and $\eqref{sc2}$, then
		\begin{enumerate}
			\item[\textrm{1.}] $( \mathcal{A},\cdot )$ is a~Kokoris algebra;
			\item[\textrm{2.}] if $\textrm{Ann}(\mathcal{A}, [\cdot,\cdot]) = 0$, then $( \mathcal{A},\cdot )$ has no nonzero idempotents and if $( \mathcal{A},\cdot )$ is finite-dimensional, then it is a~nilalgebra.
		\end{enumerate}
	\end{proposition}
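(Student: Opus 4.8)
The plan is to read both parts off the depolarization dictionary already set up above, rather than to compute directly with (\ref{sc1}) and (\ref{sc2}). For part (1), recall that by definition a scalar-Poisson algebra is a $\delta$-Poisson algebra for every $\delta$, in particular for $\delta=1$, i.e. a Poisson algebra. Hence, by Proposition \ref{sc}, the hypothesis makes $(\mathcal{A},\circ,[\cdot,\cdot])$ a scalar-Poisson algebra, so in particular a $1$-Poisson algebra, and Proposition \ref{delta} with $\delta=1$ then shows that $(\mathcal{A},\cdot)$ satisfies the identity (\ref{id-delta-poisson}) with $\delta=1$. By the corollary above asserting that any algebra satisfying (\ref{id-delta-poisson}) with $\delta=1$ is a Kokoris algebra, $(\mathcal{A},\cdot)$ is Kokoris.

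For the first assertion of part (2), suppose $e\in\mathcal{A}$ is a non-zero idempotent, $e\cdot e=e$. Then $e\circ e=\frac12(e\cdot e+e\cdot e)=e$, so $e$ is also an idempotent for the product $\circ$. Since $(\mathcal{A},\circ,[\cdot,\cdot])$ is scalar-Poisson, the scalar-Poisson identity $[x\circ y,z]=0$, specialised at $x=y=e$, yields $[e,z]=0$ for every $z\in\mathcal{A}$; equivalently $e\cdot z=z\cdot e$ for all $z$, i.e. $e\in\mathrm{Ann}(\mathcal{A},[\cdot,\cdot])$. Under the hypothesis $\mathrm{Ann}(\mathcal{A},[\cdot,\cdot])=0$ this forces $e=0$, a contradiction. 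Hence $(\mathcal{A},\cdot)$ has no non-zero idempotent.

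For the last assertion, assume in addition that $(\mathcal{A},\cdot)$ is finite-dimensional. By part (1) it satisfies (\ref{id-delta-poisson}) with $\delta=1\neq 0$, so the corollary stating that a finite-dimensional algebra satisfying (\ref{id-delta-poisson}) with $\delta\neq0$ which is not a nilalgebra must contain a non-zero idempotent applies. Combined with the non-existence of non-zero idempotents just established, this forces $(\mathcal{A},\cdot)$ to be a nilalgebra.

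Since the proof is a chain of earlier results, there is no genuine obstacle; the two points that merit a moment's care are, first, making explicit that the scalar-Poisson variety is contained in the $1$-Poisson variety, so that the $\delta=1$ depolarization identity (\ref{id-delta-poisson}) and hence the Kokoris conclusion are available, and, second, reading $\mathrm{Ann}(\mathcal{A},[\cdot,\cdot])$ correctly as the commutative centralizer $\{a\in\mathcal{A}: a\cdot x=x\cdot a \text{ for all } x\}$, which is precisely the object produced by specialising $[x\circ y,z]=0$ at a $\circ$-idempotent.
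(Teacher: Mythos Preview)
Your proof is correct and follows essentially the same route as the paper: both use Proposition~\ref{sc} to pass to the scalar-Poisson structure, deduce Kokoris from the $\delta=1$ case, use $[e\circ e,z]=0$ to force an idempotent into $\mathrm{Ann}(\mathcal{A},[\cdot,\cdot])$, and conclude the nilalgebra statement from power-associativity. The only cosmetic difference is that the paper invokes the noncommutative Jordan property (from the preceding proposition on (\ref{sc1})) to get power-associativity directly, while you route through the corollary to Proposition~\ref{37pro}; the content is the same.
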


	\begin{proof}
		If $( \mathcal{A},\cdot )$ satisfies the identities~\eqref{sc1} and~\eqref{sc2}, then $(\mathcal{A},\circ,\left[ \cdot,\cdot \right] )$ is {scalar-Poisson algebra} and $( \mathcal{A},\cdot )$ is a~Kokoris algebra. If $( \mathcal{A},\cdot )$ has an idempotent $e$ and $\textrm{Ann}(\mathcal{A}, [\cdot,\cdot]) = 0$, then $[e,x]=[e \circ e,x]=0$ for all $x$ in $(\mathcal{A},\cdot)$. Therefore, $e \in \textrm{Ann} (\mathcal{A}, [\cdot,\cdot])$, i.e. $e=0$. Moreover, since $(\mathcal{A}, \cdot)$ is a~noncommutative Jordan algebra, $(\mathcal{A}, \cdot)$ is power-associative. Every finite-dimensional power-associative algebra, which is not a~nilalgebra, contains a~nonzero idempotent. So, it is a~nilalgebra.
	\end{proof}

	\begin{definition}
		For a~positive integer $k$, an algebra $(\mathcal{A}, \cdot)$ is called a~{$k$-nice} if
		$k$ is the minimal number, such that the product of any $k$ elements is the same, regardless of their association or order.
	\end{definition}

	\begin{proposition}
		Let $(\mathcal{A}, \cdot)$ satisfies the identities $\eqref{sc1}$ and $\eqref{sc2}$, then $(\mathcal{A}, \cdot)$ is associative if and only if $(\mathcal{A}, \cdot)$ is $3$-nice.
	\end{proposition}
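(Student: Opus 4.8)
The plan is to handle the two directions separately; the ``if'' direction is immediate and the ``only if'' direction carries the weight.

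Suppose first that $(\mathcal{A},\cdot)$ is $3$-nice. Then, in particular, the product of three elements $x,y,z$ does not depend on the way it is associated, so $(xy)z=x(yz)$ and $(\mathcal{A},\cdot)$ is associative; this settles the ``if'' direction. (In fact $3$-niceness is considerably stronger than associativity: it makes every triple product totally symmetric, and then the left-hand sides of (\ref{sc1}) and (\ref{sc2}) are visibly combinations of equal terms, so $A_{1}=A_{2}=0$ --- a consistency check with the depolarization picture of Proposition~\ref{sc}, though not needed here.)

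For the converse, assume $(\mathcal{A},\cdot)$ is associative and satisfies (\ref{sc1}) and (\ref{sc2}); we may also assume $(\mathcal{A},\cdot)$ is noncommutative, since otherwise the associated scalar-Poisson algebra is trivial. Associativity already makes every triple product parenthesization-free, so the task is to show that $x_{1}x_{2}x_{3}$ is a symmetric function of its arguments. I would first substitute associativity into the two identities, turning (\ref{sc2}) into $xyz+xzy=zxy+zyx$ and (\ref{sc1}) into $xzy+zxy=yzx+zyx$; running these through all six permutations of $x,y,z$ and taking the combination that isolates $xyz+yzx=2\,zxy$, then applying one further cyclic substitution, yields the cyclic identity $xyz=yzx=zxy$, i.e.\ $(\mathcal{A},\cdot)$ is cyclic associative, in line with a corollary of Section~\ref{onemu}. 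It then remains to collapse the last transposition, $xyz=xzy$: here I would feed the cyclic identities back into (\ref{sc1})--(\ref{sc2}) --- equivalently, in the depolarized picture, use the two scalar-Poisson relations $\{xy,z\}=0$ and $x\{y,z\}+\{x,z\}y=0$ together --- with the aim of pinning down the remaining freedom among the six reorderings of a triple. Full symmetry of all triple products together with associativity is exactly $3$-niceness of the triple product, and since $(\mathcal{A},\cdot)$ is noncommutative it is neither $1$-nice nor $2$-nice, so $k=3$ is the minimal exponent.

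The step I expect to be the main obstacle is precisely the upgrade from cyclic symmetry to full symmetry of $x_{1}x_{2}x_{3}$: cyclic associativity is genuinely weaker than symmetry of a triple product, so this passage must exploit the full content of both (\ref{sc1}) and (\ref{sc2}) (or of both scalar-Poisson relations) simultaneously, not merely their cyclic consequences, and I would expect essentially all the careful bookkeeping of the proof to sit here. The remaining points --- associativity from $3$-niceness, parenthesization-independence from associativity, and the minimality of the exponent --- are routine.
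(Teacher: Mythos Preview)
Your plan is sound through the point where you reduce, under associativity, the identities to
\[
xzy+zxy=yzx+zyx \quad\text{(from }A_1\text{)},\qquad xyz+xzy=zxy+zyx \quad\text{(from }A_2\text{)},
\]
and extract the cyclic law $xyz=yzx=zxy$ (and likewise $xzy=yxz=zyx$) from their permutations. This matches the paper in spirit; the paper reaches the same point by invoking Proposition~\ref{ass-shift} (a scalar-Poisson algebra is $\delta$-Poisson for every $\delta$, so that proposition applies and yields shift associativity, which under associativity is exactly the cyclic identity).

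The genuine gap is precisely where you flag it: the passage from cyclic symmetry to full symmetry of triple products. Your proposed method---``feed the cyclic identities back into (\ref{sc1})--(\ref{sc2})''---cannot succeed as written. Once you assume associativity \emph{and} the cyclic identity, both $A_1$ and $A_2$ vanish identically (substitute $e_1=e_4=e_5$, $e_2=e_3=e_6$ into the linearized forms and both sides collapse to $0$). So there is no residual information in (\ref{sc1}) and (\ref{sc2}) to exploit at that stage; you would be feeding the identities into themselves. Equivalently, in the depolarized picture, the relation $x\circ[y,z]+[x,z]\circ y=0$ is already a consequence of $[x\circ y,z]=0$ once you are cyclic associative, so invoking ``both scalar-Poisson relations together'' adds nothing new.

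What the paper does instead is produce a single explicit linear identity of the form
\[
z(yx)-x(yz)=\tfrac{1}{3}A_1(x,y,z)+A_1(x,z,y)+\tfrac{1}{3}A_1(y,x,z)+A_2(x,z,y)-\bigl(3S(x,y,z)+S(x,z,y)+S(y,z,x)\bigr),
\]
where $S(x,y,z)=(xy)z-y(zx)$ is the shift-associative defect. The point is that $S$ vanishes by the previous step, and $A_1,A_2$ vanish by hypothesis, so the identity forces $z(yx)=x(yz)$, i.e.\ invariance under the transposition $(13)$. Together with the cyclic $(123)$ this generates all of $\mathbb{S}_3$, which is the $3$-nice conclusion. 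The moral: you need an identity that mixes $A_1,A_2$ with the \emph{shift} term $S$ in the free nonassociative setting \emph{before} reducing modulo cyclic symmetry---not an argument that works purely inside the cyclic world. Your sketch does not supply such an identity, and the bookkeeping you anticipate is not merely tedious but actually empty with the tools you list.

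A minor point: your handling of minimality (``noncommutative, hence not $1$- or $2$-nice'') is more careful than the paper's, which simply verifies the triple-product symmetry and calls that $3$-nice.
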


	\begin{proof}
		Clearly, if $(\mathcal{A}, \cdot)$ is $3$-nice, then $(\mathcal{A}, \cdot)$ is associative. On the other hand, suppose that $(\mathcal{A}, \cdot)$ is associative. It follows that $
		(\mathcal{A}, \cdot)$ is $3$-nice if
		\[
			x(yz)\ =\ x(zy)\ =\ y(xz)\ =\ y(zx)\ =\ z(xy)\ =\ z(yx)
		\]
		for all $x,y,z\in \mathcal{A}$. Now, we have
		\begin{longtable}{lcl}
			$z\left( yx\right) -x\left( yz\right)$&$ =$&$\frac{1}{3}A_{1}(x,y,z)\allowbreak
			+A_{1}(x,z,y)+\frac{1}{3}A_{1}(y,x,z)+A_{2}(x,z,y)-$\\
			\multicolumn{3}{r}{$\big(3S\left( x,y,z\right) +S\left(
			x,z,y\right) +S\left( y,z,x\right) \big)$.}
		\end{longtable}
		Since $(\mathcal{A}, \cdot)$ is associative, then by Proposition~\ref{ass-shift}, $(\mathcal{A}, \cdot)$ is a~shift associative algebra. Hence,
		\begin{center}
			$S\left( x,y,z\right)\ =\ S\left(x,z,y\right) \ =\ S\left( y,z,x\right)\ =\ 0$.
		\end{center}
		Thus,
		$x\left( yz\right) =z\left( yx\right)$, i.e. $(\mathcal{A}, \cdot)$ satisfies
		\begin{eqnarray}\label{sigm}
			x_1 (x_2 x_3 )= x_{\sigma(1)}(x_{\sigma(2)}x_{\sigma(3)})
		\end{eqnarray}
		for $\sigma \in \big\{ (13), (123) \big\} \subseteq \mathbb{S}_3$. Hence, $(\mathcal{A}, \cdot)$ satisfies the identity \eqref{sigm} for each $\sigma \in \mathbb{S}_3$. The last gives that $(\mathcal{A}, \cdot)$ is $3$-nice.
	\end{proof}

	\begin{corollary}
		Let $(\mathcal{A}, \cdot)$ satisfies the identities~\eqref{sc1} and~\eqref{sc2}, then $(\mathcal{A}, \cdot)$ is associative if and only if $(\mathcal{A}, \cdot)$ is cyclic associative algebra.
	\end{corollary}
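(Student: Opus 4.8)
The plan is to read this corollary off from the immediately preceding Proposition, which identifies associativity (for algebras satisfying (\ref{sc1}) and (\ref{sc2})) with the $3$-nice property, so that essentially no new computation is needed.

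First I would dispose of the easy implication: if $(\mathcal{A},\cdot)$ is cyclic associative, then by definition it satisfies $(xy)z=(yz)x=x(yz)$, and in particular $(xy)z=x(yz)$, so $(\mathcal{A},\cdot)$ is associative. Note this direction does not even use (\ref{sc1}) or (\ref{sc2}). For the converse, assume $(\mathcal{A},\cdot)$ satisfies (\ref{sc1}) and (\ref{sc2}) and is associative. By the preceding Proposition $(\mathcal{A},\cdot)$ is then $3$-nice, i.e.\ the product of any three elements is the same regardless of their association or order; applying this to the triple $x,y,z$ yields
\[
(xy)z \ = \ (yz)x \ = \ x(yz),
\]
which is exactly the defining identity of a cyclic associative algebra. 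The only point worth a word is that the three monomials $(xy)z$, $(yz)x$, $x(yz)$ occurring in the cyclic associative law are all products of the single triple $x,y,z$, hence are among the equalities controlled by $3$-niceness; this is clear. Thus the three conditions (associative, $3$-nice, cyclic associative) coincide under (\ref{sc1}) and (\ref{sc2}).

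I do not expect any real obstacle here: the substance of the statement was already absorbed into the $3$-nice characterization, and this corollary is merely its rephrasing. If one prefers a route not passing through $3$-niceness, an alternative is to combine Proposition \ref{ass-shift} (associativity $\Leftrightarrow$ shift associativity, which applies because a scalar-Poisson algebra is a $\delta$-Poisson algebra for every $\delta$, so an algebra satisfying (\ref{sc1}) and (\ref{sc2}) satisfies (\ref{id-delta-poisson}) for all $\delta\neq 0,\pm 1$) with the elementary fact that an associative shift associative algebra is cyclic associative; but the argument via the preceding Proposition is the shortest and is the one I would write down.
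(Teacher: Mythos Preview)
Your proposal is correct and matches the paper's intended argument: the corollary is stated without proof precisely because it is an immediate consequence of the preceding Proposition identifying associativity with $3$-niceness, exactly as you write. Your alternative route via Proposition~\ref{ass-shift} is also valid but unnecessary here.
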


	\subsection{Transposed $\delta$-Poisson algebras in one multiplication}

	\begin{proposition}\label{delta-trans}
		Consider an algebra $\left( \mathcal{A},\cdot \right) $ whose underlying vector space $\mathcal{A}$ is endowed with the two new products $\circ $ and
		$\left[ \cdot,\cdot \right] $. If $\delta \neq 1$, then $(\mathcal{A},\circ,\left[ \cdot,\cdot \right] )$ is transposed $\delta$-Poisson algebra if and only if the algebra $\left( \mathcal{A},\cdot \right) $ satisfies the following identities\footnote{The case $\delta =\frac{1}{2}$ was considered by Dzhumadildaev in~\cite{dzhuma}. These algebras were called as weak Leibniz algebras.}:
		\begin{align}
			F_{\delta }&\left( x,y,z\right) \nonumber \\
            &:= \left( zx\right) y-\left( zy\right)x+ \left( 2\delta -1\right) \big(x\left( zy\right) - y\left( zx\right)\big)+ \delta \big(z\left( yx\right)- z\left( xy\right)\big) = 0, \label{id1-delta-trans}
            \\
			G_{\delta }&\left( x,y,z\right) \nonumber \\
            &:= \left( yz\right) x-\left( zy\right)x+ \left( 2\delta -1\right) \big(z\left( xy\right)-y\left( xz\right) \big) -\left( 2\delta +1\right) \big(y\left( zx\right) - z\left( yx\right) \big) = 0. \label{id2-delta-trans}
		\end{align}
	\end{proposition}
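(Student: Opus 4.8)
The plan is to follow the strategy of Proposition~\ref{delta} verbatim, only with the transposed relation in place of the $\delta$-Poisson one. First I would record the three multilinear identities on $(\mathcal{A},\cdot)$ that are equivalent, respectively, to the three defining conditions of a transposed $\delta$-Poisson structure on $(\mathcal{A},\circ,[\cdot,\cdot])$. Associativity of $\circ$ and the Jacobi identity for $[\cdot,\cdot]$ produce exactly the same polynomials $h_{1}$ and $h_{2}$ that already appear in the proofs of Propositions~\ref{delta} and~\ref{sc}, since neither condition involves $\delta$. The new ingredient is the depolarization of $x\circ[y,z]=\delta\big([x\circ y,z]+[y,x\circ z]\big)$: expanding the two auxiliary products and clearing the common factor $\tfrac14$ gives
\begin{align*}
h_{3}^{\delta}(x,y,z):={}& x(yz)-x(zy)+(yz)x-(zy)x\\
&-\delta\big((xy)z+(yx)z-z(xy)-z(yx)+y(xz)+y(zx)-(xz)y-(zx)y\big).
\end{align*}
Thus $(\mathcal{A},\circ,[\cdot,\cdot])$ is transposed $\delta$-Poisson precisely when $h_{1}$, $h_{2}$ and $h_{3}^{\delta}$ vanish identically on $(\mathcal{A},\cdot)$.

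For the forward implication I would display $F_{\delta}$ and $G_{\delta}$ as explicit linear combinations of $h_{1}$, $h_{2}$, $h_{3}^{\delta}$ evaluated at the various permutations of $(x,y,z)$; such combinations are obtained by a finite linear-algebra computation in the $12$-dimensional space of multilinear words $w_{1}(w_{2}w_{3})$ and $(w_{1}w_{2})w_{3}$. Vanishing of $h_{1},h_{2},h_{3}^{\delta}$ then forces $F_{\delta}\equiv G_{\delta}\equiv 0$. For the converse I would run the same computation in the opposite direction, writing each of $h_{1}$, $h_{2}$ and $h_{3}^{\delta}$ as a linear combination of $F_{\delta}$ and $G_{\delta}$ with permuted arguments. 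Here the coefficients carry a denominator that vanishes exactly at $\delta=1$, which is where the hypothesis $\delta\neq1$ enters; this is consistent with the fact, visible already in Proposition~\ref{idtdpa} and in the $F$-manifold results for $\delta=1$ above, that the value $\delta=1$ is genuinely exceptional for transposed $\delta$-Poisson algebras. Structurally, the point is that the $\mathbb{S}_{3}$-submodule of multilinear identities generated by $\{h_{1},h_{2},h_{3}^{\delta}\}$ coincides, for $\delta\neq1$, with the one generated by the two identities $\{F_{\delta},G_{\delta}\}$.

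The main obstacle is purely organizational: pinning down the two families of coefficients and checking the resulting identities term by term is routine but heavily error-prone, so I would lay it out as two matrix identities over the monomial basis and verify them coordinatewise (exactly as the displayed chains of equalities are presented in Propositions~\ref{delta} and~\ref{sc}). As a final remark I would exhibit, in the style of those propositions, a pair of small algebras $\mathcal{B}_{1},\mathcal{B}_{2}$ showing that $F_{\delta}=0$ and $G_{\delta}=0$ are independent, so that neither of the two identities is redundant.
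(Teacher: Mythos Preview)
Your plan is correct and follows essentially the same approach as the paper: record $h_{1}$, $h_{2}$, and the depolarized transposed identity (the paper calls it $h_{4}^{\delta}$ rather than $h_{3}^{\delta}$, the latter name being already taken in Proposition~\ref{delta}), then exhibit explicit linear combinations in both directions, with the $\tfrac{1}{\delta-1}$ denominators appearing in the converse exactly as you anticipate, and finish with two small algebras witnessing independence. Apart from the naming clash, there is nothing to add.
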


	\begin{proof}
		Suppose first that $(\mathcal{A},\circ,\left[ \cdot,\cdot \right] )$ is a~transposed ${
		\delta }$-Poisson algebra. Since $(\mathcal{A},\circ )$ is associative, we have the identity $\left( x\circ y\right) \circ z=x\circ \left( y\circ z\right) $ which is equivalent to:
		\[
			h_{1}(x,y,z) := (xy)z+(yx)z-(zy)x-(yz)x-x(yz)-x(zy)+z(yx)+z(xy) = 0.
		\]
		Also, since $(\mathcal{A},\left[ \cdot,\cdot \right] )$ is a~Lie algebra, the
		Jacobi identity gives:
		\begin{longtable}{rclcrcrc}
			$h_{2}(x,y,z)$&$:=$&$
			(xy)z-(yx)z-(zy)x-(xz)y+(yz)x+(zx)y$\\
			&&$-x(yz)+y(xz)+z(yx)+x(zy)-y(zx)-z(xy)$&$=$&$0$.
		\end{longtable}
		Finally, the identity $x\circ \left[ y,z\right] -\delta
		\big(\left[ x\circ y,z\right] + \left[ y,x\circ z\right] \big) \ =\ 0$ gives:
		\begin{longtable}{lcl}
			$h_{4}^{\delta }(x,y,z)$&$:=$&$
			x\left( yz\right) -x\left( zy\right) +\left(
			yz\right) x-\left( zy\right) x-\delta \left( xy\right) z-\delta \left(
			yx\right) z+\delta z\left( xy\right) +$\\
			\multicolumn{3}{r}{$\delta z\left( yx\right) -\delta y\left( xz\right) -\delta y\left( zx\right) +\delta \left( xz\right)
			y+\delta \left( zx\right) y$.}
		\end{longtable}
		Now it is straightforward to verify that
		\begin{longtable}{lcl}
			$2F_{\delta }\left( x,y,z\right)$ &$=$&$\left( 1-\delta \right)
			h_{1}(x,z,y)+\delta h_{2}(x,y,z)+\left( 1-\delta \right) h_{4}^{\delta
			}(x,y,z)+$\\
			\multicolumn{3}{r}{$\left( \delta -1\right) h_{4}^{\delta }(y,x,z)-\delta h_{4}^{\delta
			}(z,x,y)$,} \\
			$G_{\delta }\left( x,y,z\right) $&$=$&$\delta h_{1}(x,y,z)-\delta h_{1}(x,z,y)+
			\frac{1}{2}h_{2}(x,y,z)+$\\
			\multicolumn{3}{r}{$\frac{1}{2}h_{4}^{\delta }(x,y,z)+\frac{1}{2}
			h_{4}^{\delta }(y,x,z)-\frac{1}{2}h_{4}^{\delta }(z,x,y)$.}
		\end{longtable}
		On the other hand, if $(\mathcal{A}, \cdot)$ satisfies the identities \eqref{id1-delta-trans} and \eqref{id2-delta-trans} then we have
		\begin{longtable}{lcl}
			$h_{1}\left( x,y,z\right) $&$=$&$-\frac{2}{3}F_{\delta }\left( x,y,z\right) -
			\frac{4}{3}F_{\delta }\left( x,z,y\right) -\frac{2}{3}F_{\delta }\left(
			y,z,x\right) -$\\
			\multicolumn{3}{r}{$\frac{1}{3}G_{\delta }\left( x,y,z\right) +\frac{2}{3}
			G_{\delta }\left( y,z,x\right) +\frac{1}{3}G_{\delta }\left( z,y,x\right) $,}
			\\
			$h_{2}\left( x,y,z\right) $&$=$&$\frac{3\delta -2}{3\left( \delta -1\right) }
			G_{\delta }\left( x,y,z\right) +\frac{3\delta -2}{3\left( \delta -1\right) }
			G_{\delta }\left( y,z,x\right) \allowbreak -\frac{3\delta -2}{3\left( \delta
			-1\right) }G_{\delta }\left( z,y,x\right) -$\\
			\multicolumn{3}{r}{$\frac{1}{3\left( \delta -1\right)
			}F_{\delta }\left( x,y,z\right) +\frac{1}{3\left( \delta -1\right) }
			F_{\delta }\left( x,z,y\right) -\frac{1}{3\left( \delta -1\right) }F_{\delta
			}\left( y,z,x\right) $,} \\
			$h_{4}^{\delta }\left( x,y,z\right) $&$=$&$-\frac{2\delta ^{2}-5\delta +2}{3\left( \delta -1\right)}
			G_{\delta }\left( x,y,z\right) +\frac{\delta ^{2}-\delta +1}{3\left( \delta -1\right)}
			G_{\delta }\left( y,z,x\right) -\frac{\delta ^{2}-\delta +1}{3\left( \delta -1\right)}
			G_{\delta }\left( z,y,x\right) +$\\
			\multicolumn{3}{r}{$\frac{2\delta ^{2}-2\delta -1}{3\left( \delta -1\right)}
			F_{\delta }\left( x,y,z\right) -\allowbreak \frac{2\delta ^{2}-2\delta -1}{
			3\left( \delta -1\right)}F_{\delta }\left( x,z,y\right) -\frac{\left( 2\delta -1\right)
			^{2}}{3\left( \delta -1\right)}F_{\delta }\left( y,z,x\right) $.}
		\end{longtable}
		Now we consider the following algebras:
		\begin{longtable}{lllllll}
			$\mathcal{B}_1$ &$:$&$e_{1}\cdot e_{3}=e_{1}$. \\
			$\mathcal{B}_2$ &$:$&$e_{1}\cdot e_{1}=e_{1}$,&$e_{1}\cdot e_{3}=e_{2}$.
		\end{longtable}
		$\mathcal{B}_1$ satisfies ($\ref{id1-delta-trans}$) but it does not satisfy ($\ref{id2-delta-trans}$). $\mathcal{B}_2$
		satisfies ($\ref{id2-delta-trans}$) but it does not satisfy ($\ref{id1-delta-trans}$). Hence, identities ($\ref{id1-delta-trans}$) and ($\ref{id2-delta-trans}$) are independent.
	\end{proof}

	\begin{proposition}
		Consider an algebra $\left( \mathcal{A},\cdot \right) $ whose underlying vector space $\mathcal{A}$ is endowed with the two new products $\circ $ and
		$\left[ \cdot,\cdot \right] $. Then $(\mathcal{A},\circ,
		\left[ \cdot,\cdot \right] )$ is a transposed $1$-Poisson algebra if and only if the algebra $\left( \mathcal{A},\cdot \right) $ satisfies the following identities$:$
		\begin{eqnarray}
			F_{1}\left( x,y,z\right) &:&= \ \left( zx\right) y-\left( zy\right) x+x\left(
			zy\right) -z\left( xy\right) -y\left( zx\right) +z\left( yx\right) =0, \label{id1-td1}\\
			G_{1}\left( x,y,z\right) &:&= \ \left( yz\right) x-\left( zy\right) x-y\left(
			xz\right) +z\left( xy\right) -3y\left( zx\right) +3z\left( yx\right) =0, \label{id2-td1}\\
			H\left( x,y,z\right) &:&= \ x(yz)-x(zy)+y(zx)-y(xz)+z(xy)-z(yx)=0.\label{id3-td1}
		\end{eqnarray}
	\end{proposition}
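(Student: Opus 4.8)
The plan is to run the polarization argument of Proposition~\ref{delta-trans} at the degenerate value $\delta=1$. Since $\circ$ is commutative and $[\cdot,\cdot]$ is anticommutative by construction, the polarized algebra $(\mathcal{A},\circ,[\cdot,\cdot])$ is a transposed $1$-Poisson algebra exactly when three conditions hold: associativity of $\circ$, the Jacobi identity for $[\cdot,\cdot]$, and the compatibility $x\circ[y,z]=[x\circ y,z]+[y,x\circ z]$. Rewriting each of these in the single product $\cdot$ produces, respectively, the trilinear expressions $h_1(x,y,z)$ and $h_2(x,y,z)$ from the proof of Proposition~\ref{delta-trans}, together with $h_4^{1}(x,y,z)$, the $\delta=1$ specialization of $h_4^{\delta}$. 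Hence $(\mathcal{A},\circ,[\cdot,\cdot])$ is transposed $1$-Poisson if and only if $h_1\equiv h_2\equiv h_4^{1}\equiv 0$ on $(\mathcal{A},\cdot)$, and the proposition is the assertion that the $\mathbb{S}_3$-submodule of the $12$-dimensional space of degree-three multilinear words in one binary operation spanned by $\{h_1,h_2,h_4^{1}\}$ equals the one spanned by $\{F_1,G_1,H\}$.

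For the forward implication I would write each of $F_1$, $G_1$, $H$ explicitly as a $\mathbb{C}$-linear combination of $h_1,h_2,h_4^{1}$ evaluated at permutations of $(x,y,z)$, just as in the displayed formulas of Proposition~\ref{delta-trans}; this is a lengthy but routine check. The identity $H$ admits a clean reading: a short computation gives $H(x,y,z)=2\bigl(x\circ[y,z]+y\circ[z,x]+z\circ[x,y]\bigr)-\frac{1}{2}h_2(x,y,z)$, so modulo the Jacobi identity $H\equiv 0$ is precisely $[x,y]\circ z+[y,z]\circ x+[z,x]\circ y=0$, i.e.\ identity $(\ref{idtp1})$; and this identity holds in every transposed $\delta$-Poisson algebra, since it follows from anticommutativity of $\{\cdot,\cdot\}$, commutativity of $\cdot$, and $(\ref{deltranspois})$ alone. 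This confirms the forward direction for $H$, while $F_1,G_1$ are handled as in Proposition~\ref{delta-trans}.

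The converse is the delicate part: one must express $h_1$, $h_2$, $h_4^{1}$ back as $\mathbb{C}$-linear combinations of $\mathbb{S}_3$-translates of $F_1$, $G_1$, $H$. This is exactly where $\delta=1$ departs from Proposition~\ref{delta-trans}: there the recovery formulas for $h_2$ and $h_4^{\delta}$ carry a factor $(\delta-1)^{-1}$ and degenerate at $\delta=1$, so $F_1$ and $G_1$ alone no longer recover $h_2$ and $h_4^{1}$ and the third generator $H$ becomes indispensable. Concretely, $h_1$ is still a combination of $\mathbb{S}_3$-translates of $F_1$ and $G_1$ only (the relevant formula in Proposition~\ref{delta-trans} has no pole at $\delta=1$), whereas recovering $h_2$ and $h_4^{1}$ requires $H$ as well. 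I expect the main obstacle to be pinning down the exact identity $H$ together with the correct coefficients of this inverse system — a wrong choice leaves the span of $\{F_1,G_1,H\}$ a proper subspace and the characterization fails.

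Finally, to see that $(\ref{id1-td1})$, $(\ref{id2-td1})$, $(\ref{id3-td1})$ are pairwise independent I would exhibit three small algebras $\mathcal{B}_1$, $\mathcal{B}_2$, $\mathcal{B}_3$, each two- or three-dimensional with a single nonzero structure constant, so that $\mathcal{B}_i$ satisfies two of the three identities but violates the remaining one — mirroring the examples $\mathcal{B}_1,\mathcal{B}_2$ at the end of the proof of Proposition~\ref{delta-trans}. This completes the argument.
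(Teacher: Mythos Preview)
Your proposal is correct and mirrors the paper's proof: both directions are handled by expressing each of the systems $\{F_1,G_1,H\}$ and $\{h_1,h_2,h_4^{1}\}$ as $\mathbb{C}$-linear combinations of $\mathbb{S}_3$-translates of the other, and your diagnosis that the $(\delta-1)^{-1}$ pole in the general recovery formulas for $h_2$ and $h_4^{\delta}$ is precisely what forces the extra generator $H$ at $\delta=1$ is exactly the paper's rationale (the paper indeed recovers $h_1$ from $F_1,G_1$ alone, while $h_2$ and $h_4^{1}$ each require one copy of $H$). One caveat on the independence step: the paper's example $\mathcal{B}_3$ separating $(\ref{id1-td1})$ and $(\ref{id2-td1})$ from $(\ref{id3-td1})$ is five-dimensional, so your expectation that a two- or three-dimensional algebra with a single nonzero structure constant will do may be too optimistic.
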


	\begin{proof}
		Suppose first that $(\mathcal{A},\circ,\left[ \cdot,\cdot \right] )$ is a~transposed $1$-Poisson algebra. Then, as in the proof of Proposition~\ref{delta-trans}, we have
		\begin{longtable}{lclcl}
			$F_{1}\left( x,y,z\right) $&$=$&$\frac{1}{2}h_{2}(x,y,z)-\frac{1}{2}h_{4}^{1}(z,x,y)$, \\
			$G_{1}\left( x,y,z\right) $&$=$&
			$h_{1}(x,y,z)-h_{1}(x,z,y)+\frac{1}{2}h_{2}(x,y,z)+$\\
			\multicolumn{3}{r}{$\frac{1}{2}h_{4}^{1}(x,y,z)+\frac{1}{2}h_{4}^{1 }(y,x,z)-
			\frac{1}{2}h_{4}^{1}(z,x,y)$,} \\
			$H\left( x,y,z\right) $&$=$&$-\frac{1}{2}h_{2}(x,y,z)+\frac{1}{2}
			h_{4}^{1}(x,y,z)-\frac{1}{2}h_{4}^{1}(y,x,z)+\frac{1}{2}h_{4}^{1}(z,x,y)$.
		\end{longtable}
		On the other hand, if $(\mathcal{A},\cdot)$ satisfies the identities ($\ref{id1-td1}$), ($\ref{id2-td1}$) and ($\ref{id3-td1}$) then we have
		\begin{longtable}{lclcl}
			$h_{1}\left( x,y,z\right) $&$=$&$-\frac{2}{3}F_{1}\left( x,y,z\right) -
			\frac{4}{3}F_{1 }\left( x,z,y\right) -\frac{2}{3}F_{1 }\left(
			y,z,x\right) -$\\
			\multicolumn{3}{r}{$\frac{1}{3}G_{1 }\left( x,y,z\right) +\frac{2}{3}
			G_{1 }\left( y,z,x\right) +\frac{1}{3}G_{1 }\left( z,y,x\right)$}&$=$&$0$,\\
			$h_{2}\left( x,y,z\right) $&$=$&$G_{1}\left( x,y,z\right) +G_{1}\left(
			y,z,x\right) -G_{1}\left( z,y,x\right) +H\left( x,y,z\right)$&$=$&$0$,\\
			$h_{4}^{1}\left( x,y,z\right) $&$=$&$F_{1 }\left( x,y,z\right) -F_{1
			}\left( x,z,y\right) -F_{1 }\left( y,z,x\right) +H\left( x,y,z\right)$&$=$&$0$.
		\end{longtable}
		Now we consider the following algebras:
		\begin{longtable}{lllllllll}
			$\mathcal{B}_1$ &$:$&$e_{1}\cdot e_{3}=e_{1}$. \\
			$\mathcal{B}_2$ &$:$&$e_{1}\cdot e_{1}=e_{1}$,&$e_{1}\cdot e_{3}=e_{2}$.\\
			$\mathcal{B}_{3}$ &$:$&
			$e_{3}\cdot e_{3}=e_{4}$,&$e_{1} \cdot e_{2}=e_{4}+e_{5}$,&$e_{2}\cdot e_{1}=e_{5}-e_{4}$,&$e_{1}\cdot e_{3}=-e_{1}$,\\
			&&$e_{3}\cdot e_{1}=e_{1}$,&$e_{2}\cdot e_{3}=e_{2}$,&$e_{3}\cdot e_{2}=-e_{2}$,&$e_{4}\cdot e_{3}=2e_{5}$.\\
		\end{longtable}
		$\mathcal{B}_1$ satisfies ($\ref{id1-td1}$) and ($\ref{id3-td1}$) but it does not satisfy ($\ref{id2-td1}$).
		$\mathcal{B}_2$ satisfies ($\ref{id2-td1}$) and ($\ref{id3-td1}$) but it does not satisfy ($\ref{id1-td1}$).
		$\mathcal{B}_{3}$ satisfies ($\ref{id1-td1}$) and ($\ref{id2-td1}$) but it does not satisfy ($\ref{id3-td1}$). Hence, the identities ($\ref{id1-td1}$), ($\ref{id2-td1}$) and ($\ref{id3-td1}$) are independent.
	\end{proof}

	\subsection{Transposed scalar-Poisson algebras in one multiplication}

	\begin{definition}
		An algebra $(\mathrm{P},\cdot,\{\cdot,\cdot \})$ is defined to be {
		transposed scalar-Poisson algebra}, if $(\mathrm{P},\cdot )$ is a~commutative associative algebra, $(\mathrm{P},\{\cdot,\cdot \})$ is a~Lie algebra and the following identities hold$:$
		\begin{center}
			$x\{y,z\} \ = \ 0, \ \{xy,z\}+\{y,xz\}\ =\ 0$.
		\end{center}
	\end{definition}

	It is clear that any mixed-Poisson algebra $(\mathrm{P},\cdot,\{\cdot,\cdot \})$ is a~scalar-Poisson and transposed scalar-Poisson algebra. Also, if $(\mathrm{P},\cdot,\{\cdot,\cdot \})$ is a~scalar-Poisson and transposed scalar-Poisson algebra, then it is a~mixed-Poisson algebra. That is to say, the variety of mixed-Poisson algebras is the intersection of the varieties of scalar-Poisson algebras and transposed scalar-Poisson algebras.

	\begin{proposition}
		Consider an algebra $\left( \mathcal{A},\cdot \right) $ whose underlying vector space $\mathcal{A}$ is endowed with the two new products $\circ $ and
		$\left[ \cdot,\cdot \right] $. Then $(\mathcal{A},\circ,\left[ \cdot,\cdot \right] )$ is a~{transposed scalar-Poisson algebra if and only if the }algebra $
		\left( \mathcal{A},\cdot \right) $ satisfies the following identities$:$
		\begin{eqnarray}
			S_{1}\left( x,y,z\right) \ &:=& \ 2(xy)z-2(zy)x+z(xy)-2y(zx)+z(yx)\ =\ 0,
			\label{tsc1} \\
			S_{2}\left( x,y,z\right) \ &:=& \ (yz)x-(zy)x-2y(zx)+2z(yx)\ =\ 0. \label{tsc2}
		\end{eqnarray}
	\end{proposition}

	\begin{proof}
		Let $h_{1}(x,y,z)$, $h_{2}(x,y,z)$ and $h_{4}^{0}(x,y,z)$ be as defined in Proposition~\ref{delta-trans}. The identity $\left[ x\circ y,z\right] +\left[ y,x\circ z\right]=0 $ is equivalent to:
		\[
			D(x,y,z)
            := \left( xy\right) z-\left( xz\right) y+\left( yx\right) z-\left(zx\right) y+y\left( xz\right) -z\left( xy\right) +y\left( zx\right) -z\left(yx\right)
            =0.
		\]
		Suppose first that $(\mathcal{A},\circ,\left[ \cdot,\cdot \right] )$ is a~transposed scalar-Poisson algebra. Then it is straightforward to verify that:
		\begin{longtable}{lcl}
			$S_{1}\left( x,y,z\right) $&$=$&$
			h_{1}(x,y,z)-
			\frac{1}{2}h_{1}(x,z,y)+
			\frac{1}{2}h_{2}(x,y,z)+
			\frac{1}{2}h_{4}^{0}(x,y,z)+$\\
			\multicolumn{3}{r}{$
			\frac{1}{2}h_{4}^{0}(y,x,z)+
			\frac{1}{2}h_{4}^{0}(z,x,y)-
			\frac{1}{2}D(x,y,z)+
			\frac{1}{2}D(x,y,z)=0, $}\\
			$S_{2}\left( x,y,z\right) $&$=$&$
			\frac{1}{2}h_{1}(x,y,z)-
			\frac{1}{2}h_{1}(x,z,y)+
			\frac{1}{2}h_{2}(x,y,z)+
			\frac{1}{2}h_{4}^{0}(x,y,z)+$\\
			\multicolumn{3}{r}{$
			\frac{1}{2}h_{4}^{0}(y,x,z)-
			\frac{1}{2}h_{4}^{0}(z,x,y)-
			\frac{1}{2}D(x,y,z)=0$.} \\
		\end{longtable}
		On the other hand, if $(\mathcal{A},\cdot)$ satisfies the identities ($\ref{tsc1}$) and~\eqref{tsc2} then we have
		\begin{longtable}{lclcl}
			$h_{1}\left( x,y,z\right) $&$=$&$
			\frac{2}{3}S_{1}\left( x,y,z\right)+S_{1}\left( y,x,z\right) +\frac{2}{3}S_{1}\left( z,x,y\right) -\frac{1}{3}
			S_{1}\left( y,z,x\right) -$\\
			\multicolumn{3}{r}{$\frac{1}{3}S_{2}\left( x,y,z\right) -\frac{2}{3}
			S_{2}\left( y,x,z\right) -\frac{1}{3}S_{2}\left( z,x,y\right)$}&$=$&$0$, \\
			$h_{2}\left( x,y,z\right) $&$=$&$
			\frac{1}{3}S_{1}\left( x,y,z\right) +\frac{1}{3}
			S_{1}\left( z,x,y\right) +\frac{1}{3}S_{1}\left( y,z,x\right) +$\\
			\multicolumn{3}{r}{$
			\frac{1}{3}
			S_{2}\left( x,y,z\right) -\frac{1}{3}S_{2}\left( y,x,z\right) +\frac{1}{3}
			S_{2}\left( z,x,y\right)$}&$=$&$0$, \\
			$h_{4}^{0}(x,y,z) $&$=$&$
			\frac{1}{3}S_{1}\left( x,y,z\right) +\frac{2}{3}S_{1}\left(
			x,z,y\right) +\frac{1}{3}S_{1}\left( y,x,z\right) +\frac{2}{3}S_{1}\left(
			z,x,y\right) +$\\
			\multicolumn{3}{r}{$
			S_{1}\left( y,z,x\right) +\frac{1}{3
			}S_{2}\left( x,y,z\right) +\frac{2}{3}S_{2}\left( y,x,z\right) -\frac{2}{3}
			S_{2}\left( z,x,y\right)$}&$=$&$0$, \\
			$D\left( x,y,z\right) $&$=$&$
			\frac{1}{3}S_{1}\left( x,y,z\right) +
			\frac{1}{3}S_{1}\left( x,z,y\right) +
			\frac{2}{3}S_{1}\left( y,x,z\right) +
			\frac{2}{3}S_{1}\left( y,z,x\right) -$\\
			\multicolumn{3}{r}{$
			\frac{2}{3}S_{2}\left( x,y,z\right) -
			\frac{1}{3}S_{2}\left( y,x,z\right) +
			\frac{1}{3}S_{2}\left( z,x,y\right)$}&$=$&$0$. \\
		\end{longtable}
		Now we consider the following algebras:
		\begin{longtable}{lllllllll}
			$\mathcal{B}_1$ &$:$& $e_{1}\cdot e_{1}=e_{1}$,&$e_{1}\cdot e_{3}=e_{2}$.\\
			$\mathcal{B}_{2}$ &$:$& $e_{1}\cdot e_{1}=e_{1}$,&$e_{1}\cdot e_{2}=-2e_{2}$,&$e_{2}\cdot e_{1}=e_{2}$.\\
		\end{longtable}
		$\mathcal{B}_1$ satisfies \eqref{tsc2} but it does not satisfy \eqref{tsc1}. $\mathcal{B}_{2}$ satisfies \eqref{tsc1} but it does not satisfy \eqref{tsc2}. Hence, the identities \eqref{tsc1} and \eqref{tsc2} are independent.
	\end{proof}

	\subsection{Mixed-Poisson algebras in one multiplication}

	Let us consider the intersection of
	$\delta_1$-Poisson and transposed $\delta_2$-Poisson algebras. Due to Proposition~\ref{dptm} and Example~\ref{delta-mix-ex}, we have the following two situations.

	\begin{definition}
		An algebra $(\textrm{P}, \cdot, \{\cdot,\cdot\})$ is defined to be a~{mixed-Poisson algebra}, if $(\textrm{P}, \cdot)$ is a~commutative associative algebra,
		$(\textrm{P}, \{\cdot,\cdot\})$ is a~Lie algebra and the following identities hold:
		\[
			\{x y, z \} \ =\ x \{ y,z \} \ = \ 0
		\]
		or, equivalently, the identity $\{xy,z\}+\{x,z\}y-\{y,z\}x\ =\ 0$ holds.
	\end{definition}

	\begin{definition}
		An algebra $(\textrm{P}, \cdot, \{\cdot,\cdot\})$ is defined to be a~$\delta$-{mixed-Poisson algebra}, if $(\textrm{P}, \cdot)$ is a~commutative associative algebra,
		$(\textrm{P}, \{\cdot,\cdot\})$ is a~Lie algebra and the following identities hold:
		\begin{longtable}{lcl}
			$\{x y, z \} $&$ =$&$ \delta \big( x \{ y,z \} + \{ x,z \} y \big)$,\\
			$x \{ y, z \} $&$ = $&$ \frac{1}{3\delta} \big( \{x y,z \} + \{ y,xz \} \big)$.
		\end{longtable}
	\end{definition}

	\begin{proposition}\label{depol}
		Consider an algebra $\left( \mathcal{A},\cdot \right) $ whose underlying vector space $\mathcal{A}$ is endowed with the two new products $\circ $ and
		$\left[ \cdot,\cdot \right] $. Then $(\mathcal{A},\circ,
		\left[ \cdot,\cdot \right] )$ satisfies $\left[ x\circ y,z\right] =x\circ \left[ y,z\right] =0$ if and only if $\left( \mathcal{A},\cdot \right) $ satisfies the following identity:
		\begin{equation}
			L(x,y,z)\:=\ (xz)y-y(zx)\ =\ 0. \label{id1-mix}
		\end{equation}
		Furthermore, $(\mathcal{A},\circ,
		\left[ \cdot,\cdot \right] )$ is a~mixed-Poisson algebra if and only if the algebra $\left( \mathcal{A},\cdot \right) $ satisfies the identities~\eqref{tsc2} and~\eqref{id1-mix}.
	\end{proposition}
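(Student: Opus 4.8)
The plan is to follow the pattern of Propositions~\ref{delta},~\ref{sc} and~\ref{tsc}: express each bracket condition on $(\mathcal{A},\circ,[\cdot,\cdot])$ as a multilinear degree-$3$ identity in the single product $\cdot$, and then establish the two equivalences by producing explicit linear combinations inside the $12$-dimensional space of multilinear degree-$3$ monomials, spanned by $(x_{\sigma(1)}x_{\sigma(2)})x_{\sigma(3)}$ and $x_{\sigma(1)}(x_{\sigma(2)}x_{\sigma(3)})$ with $\sigma\in\mathbb{S}_3$. We work under the standing assumption that the base field has characteristic $0$ (characteristic $\neq 2,3$ is enough), so that depolarization is invertible and a degree-$3$ multilinear identity is determined by its coordinate vector in this space.

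For the first equivalence, expanding $x\circ y=\tfrac12(xy+yx)$ and $[x,y]=\tfrac12(xy-yx)$ shows that $[x\circ y,z]=0$ is equivalent to the identity $h_{3}^{0}(x,y,z):=(xy)z+(yx)z-z(xy)-z(yx)=0$ used in Proposition~\ref{sc}, while $x\circ[y,z]=0$ is equivalent to
\[
 M(x,y,z)\ :=\ x(yz)-x(zy)+(yz)x-(zy)x\ =\ 0 .
\]
It then remains to check by direct computation that $L(x,y,z)$ is a linear combination of the $\mathbb{S}_3$-images of $h_{3}^{0}$ and $M$, and conversely that each of $h_{3}^{0}(x,y,z)$ and $M(x,y,z)$ is a linear combination of the $\mathbb{S}_3$-images of $L$; both reduce to solving small linear systems with rational coefficients whose denominators involve only the primes $2$ and $3$.

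For the second assertion, recall that $(\mathcal{A},\circ,[\cdot,\cdot])$ is a mixed-Poisson algebra precisely when $(\mathcal{A},\circ)$ is associative, $(\mathcal{A},[\cdot,\cdot])$ is a Lie algebra, and $[x\circ y,z]=x\circ[y,z]=0$ hold (commutativity of $\circ$ is automatic). As in the earlier propositions, associativity of $\circ$ is the identity $h_{1}(x,y,z)=0$ and the Jacobi identity is $h_{2}(x,y,z)=0$. Together with the first part this gives: $(\mathcal{A},\circ,[\cdot,\cdot])$ is mixed-Poisson if and only if $(\mathcal{A},\cdot)$ satisfies $L=0$, $h_{1}=0$ and $h_{2}=0$. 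The remaining task is to show that, inside the $12$-dimensional space above, the sets $\{L,h_{1},h_{2}\}$ and $\{L,S_{2}\}$ span the same subspace, where $S_{2}$ is as in~(\ref{tsc2}); equivalently, modulo $L$ the identity $S_{2}$ is equivalent to the pair $h_{1},h_{2}$. This again comes down to two explicit combination formulas, $S_{2}$ in terms of $L,h_{1},h_{2}$ and their permutations, and each of $h_{1},h_{2}$ in terms of $L,S_{2}$ and their permutations. (Alternatively one could use the remark preceding Proposition~\ref{tsc}, that the variety of mixed-Poisson algebras is the intersection of the varieties of scalar-Poisson and transposed scalar-Poisson algebras, and deduce the statement from Propositions~\ref{sc} and~\ref{tsc}; the direct route is preferred only because fewer auxiliary identities are involved.)

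The difficulty is entirely computational rather than conceptual: the real obstacle is to find, and then verify, the correct rational coefficients for the handful of linear combinations, while keeping track of the characteristics the denominators exclude. Concretely I would fix once and for all an ordered basis of the $12$-dimensional monomial space, record the coordinate vectors of $L$, $h_{1}$, $h_{2}$, $S_{2}$, $h_{3}^{0}$, $M$ and all their $\mathbb{S}_3$-translates, and then read off the two equivalences by elementary linear algebra, each one following as soon as the two subspace inclusions have been confirmed. If one additionally wishes the two identities in the last assertion to be shown independent (this is not claimed in the statement), it is enough to exhibit, in the style of the algebras $\mathcal{B}_i$ used elsewhere in this section, one satisfying~(\ref{tsc2}) but not~(\ref{id1-mix}) and one satisfying~(\ref{id1-mix}) but not~(\ref{tsc2}).
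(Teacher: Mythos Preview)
Your proposal is correct and follows essentially the same route as the paper: your auxiliary identity $M(x,y,z)$ is precisely the paper's $h_{4}^{0}(x,y,z)$ from Proposition~\ref{delta-trans}, and the paper then does exactly the linear-algebra verification you describe, exhibiting $L$ in terms of $h_{3}^{0},h_{4}^{0}$ and each of $h_{1},h_{2},h_{3}^{0},h_{4}^{0}$ in terms of $L,S_{2}$ (with explicit rational coefficients of denominator at most~$3$). The paper also supplies two small algebras $\mathcal{B}_{1},\mathcal{B}_{2}$ showing that~(\ref{tsc2}) and~(\ref{id1-mix}) are independent, just as you anticipate in your final remark.
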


	\begin{proof}
		Let $h_{1}(x,y,z)$, $h_{2}(x,y,z)$, $h_{3}^{0 }(x,y,z)$ and $h_{4}^{0}(x,y,z)$ be as defined in Propositions~\ref{delta} and~\ref{delta-trans}. Then we have
		\begin{longtable}{lclcl}
			$L\left( x,y,z\right) $&$=$&$\frac{1}{2}h_{3}^{0}(x,z,y)+\frac{1}{2}
			h_{4}^{0}(y,x,z)$, \\
			$S_{2}\left( x,y,z\right) $&$=$&$
			\frac{1}{2}h_{1}(x,y,z)-\frac{1}{2}h_{1}(x,z,y)+
			\frac{1}{2}h_{2}(x,y,z)-\frac{1}{2}h_{3}^{0}(x,y,z)+\frac{1}{2}
			h_{3}^{0}(x,z,y)+$\\
			\multicolumn{3}{r}{$\frac{1}{2}h_{4}^{0}(x,y,z)+\frac{1}{2}h_{4}^{0}(y,x,z)-
			\frac{1}{2}h_{4}^{0}(z,x,y)$,}
		\end{longtable}
		On the other hand, we have
		\begin{longtable}{lclcl}
			$h_{1}(x,y,z) $&$=$&$-\frac{4}{3}L\left( x,y,z\right) +\frac{1}{3}
			L\left( x,z,y\right) -\frac{5}{3}L\left( y,x,z\right) -\frac{1}{3}L\left(
			z,x,y\right) +\frac{5}{3}L\left( y,z,x\right) +$\\
			\multicolumn{3}{r}{$\frac{4}{3}L\left(
			z,y,x\right) +\frac{2}{3}S_{2}\left( x,y,z\right) +\frac{4}{3}S_{2}\left(
			y,x,z\right) +\frac{2}{3}S_{2}\left( z,x,y\right) $,} \\
			$h_{2}\left( x,y,z\right) $&$=$&$-\frac{1}{3}L\left( x,y,z\right) +\frac{1}{3}
			L\left( x,z,y\right) +\frac{1}{3}L\left( y,x,z\right) -\frac{1}{3}L\left(
			z,x,y\right) -\frac{1}{3}L\left( y,z,x\right) + $\\
			\multicolumn{3}{r}{$\frac{1}{3}L\left(
			z,y,x\right) +\frac{2}{3}S_{2}\left( x,y,z\right) -\frac{2}{3}S_{2}\left(
			y,x,z\right) +\frac{2}{3}S_{2}\left( z,x,y\right) $,} \\
			$h_{3}^{0}(x,y,z) $&$=$&$L\left( x,z,y\right) +L\left( y,z,x\right) $, \\
			$h_{4}^{0}(x,y,z) $&$=$&$L\left( y,x,z\right) -L\left( z,x,y\right) $. \\
		\end{longtable}
		Therefore, $(\mathcal{A},\circ,
		\left[ \cdot,\cdot \right] )$ satisfies the identities $\left[ x\circ y,z\right] =x\circ \left[ y,z\right] =0$ (equivalently, the identities $h_{3}^{0}(x,y,z)=h_{4}^{0}(x,y,z)=0$) if and only if $\left( \mathcal{A},\cdot \right) $ satisfies~\eqref{id1-mix} and $(\mathcal{A},\circ,
		\left[ \cdot,\cdot \right] )$ is a~mixed-Poisson algebra if and only if the algebra $\left( \mathcal{A},\cdot \right) $ satisfies the identities~\eqref{tsc2} and~\eqref{id1-mix}.

		Now we consider the following algebras:
		\begin{longtable}{lllllll}
			$\mathcal{B}_1$ &$:$&$e_{1}\cdot e_{1}=e_{2}$,&$e_{1}\cdot e_{2}=e_{2}$. \\
			$\mathcal{B}_2$ &$:$&$e_{1}\cdot e_{1}=e_{2}$,&$e_{2}\cdot e_{2}=e_{1}$.
		\end{longtable}
		$\mathcal{B}_1$ satisfies~\eqref{tsc2} but it does not satisfy~\eqref{id1-mix}. $\mathcal{B}_2$
		satisfies~\eqref{id1-mix} but it does not satisfy~\eqref{tsc2}. Hence, identities~\eqref{tsc2} and~\eqref{id1-mix} are independent.
	\end{proof}

	\begin{proposition}
		Consider an algebra $\left( \mathcal{A},\cdot \right) $ whose underlying vector space $\mathcal{A}$ is endowed with the two new products $\circ $ and
		$\left[ \cdot,\cdot \right] $. Then $(\mathcal{A},\circ,
		\left[ \cdot,\cdot \right] )$ is a~$\delta$-mixed-Poisson algebra if and only if the algebra $\left( \mathcal{A},\cdot \right) $ satisfies the identities $\eqref{id-delta-poisson}$ and~\eqref{id3-td1}.
	\end{proposition}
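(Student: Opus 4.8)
The plan is to reduce the statement to Proposition~\ref{delta} together with one short identity manipulation, so that no heavy computation with $f_\delta$ is needed; throughout $\delta\neq 0$, which is implicit in the definition of a $\delta$-mixed-Poisson algebra. The first step is to observe that a $\delta$-mixed-Poisson algebra is precisely a $\delta$-Poisson algebra that in addition satisfies the cyclic identity (\ref{idtp1}), $\{x,y\}z+\{y,z\}x+\{z,x\}y=0$. Indeed, in any $\delta$-Poisson algebra the antisymmetry of the bracket gives $\{y,xz\}=-\{xz,y\}=-\delta\bigl(x\{z,y\}+\{x,y\}z\bigr)=\delta\bigl(x\{y,z\}-\{x,y\}z\bigr)$, whence $\{xy,z\}+\{y,xz\}=\delta\bigl(2x\{y,z\}+\{x,z\}y-\{x,y\}z\bigr)$; comparing this with $3\delta\,x\{y,z\}$ and cancelling $\delta$ shows that the second defining identity of a $\delta$-mixed-Poisson algebra is equivalent, within the variety of $\delta$-Poisson algebras, to $\{x,z\}y-\{x,y\}z=x\{y,z\}$, i.e., after using commutativity of $\cdot$, to (\ref{idtp1}). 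So the first thing I would record is: the variety of $\delta$-mixed-Poisson algebras is the variety of $\delta$-Poisson algebras satisfying (\ref{idtp1}).

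Next I would apply Proposition~\ref{delta}: $(\mathcal{A},\circ,[\cdot,\cdot])$ is a $\delta$-Poisson algebra if and only if $(\mathcal{A},\cdot)$ satisfies (\ref{id-delta-poisson}). Hence it remains only to show that, when $(\mathcal{A},\cdot)$ already satisfies (\ref{id-delta-poisson}), the polarization $(\mathcal{A},\circ,[\cdot,\cdot])$ satisfies (\ref{idtp1}) if and only if $(\mathcal{A},\cdot)$ satisfies (\ref{id3-td1}), i.e. $H(x,y,z)=0$. Expanding $[x,y]\circ z=\frac14\bigl((xy)z-(yx)z+z(xy)-z(yx)\bigr)$ and summing cyclically one gets, purely formally,
\[
[x,y]\circ z+[y,z]\circ x+[z,x]\circ y=\frac14\bigl(H(x,y,z)+H'(x,y,z)\bigr),
\]
where $H'(x,y,z):=(xy)z-(yx)z+(yz)x-(zy)x+(zx)y-(xz)y$. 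On the other hand $H'(x,y,z)=2\bigl([x,y]\cdot z+[y,z]\cdot x+[z,x]\cdot y\bigr)$ by the definition of $[\cdot,\cdot]$, and $[x,y]\cdot z=[x,y]\circ z+[[x,y],z]$, so $H'=2\bigl([x,y]\circ z+[y,z]\circ x+[z,x]\circ y\bigr)+2\bigl([[x,y],z]+[[y,z],x]+[[z,x],y]\bigr)$; since (\ref{id-delta-poisson}) forces the Jacobi identity for $[\cdot,\cdot]$ (Proposition~\ref{delta}), the last triple sum vanishes and $H'=2\bigl([x,y]\circ z+[y,z]\circ x+[z,x]\circ y\bigr)$. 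Comparing with the display gives $H'=H$ modulo (\ref{id-delta-poisson}), so the cyclic sum equals $\frac12 H(x,y,z)$; therefore, modulo (\ref{id-delta-poisson}), identity (\ref{idtp1}) for $(\mathcal{A},\circ,[\cdot,\cdot])$ is exactly $H=0$, which is (\ref{id3-td1}). Putting the three steps together proves the proposition.

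The computation is routine; the only input beyond formal bracket juggling is the Jacobi identity, which comes for free from (\ref{id-delta-poisson}) via Proposition~\ref{delta}, and the one point worth isolating is that modulo $f_\delta=0$ the two auxiliary ternary expressions $H$ and $H'$ coincide, so the depolarized transposed identity collapses to the single identity $H=0$; the main risk is just a sign slip among the six monomials of $H$, $H'$ and the three Jacobi terms. Alternatively one could argue exactly as in Propositions~\ref{delta} and~\ref{delta-trans}: a $\delta$-mixed-Poisson algebra is simultaneously $\delta$-Poisson and transposed $\frac{1}{3\delta}$-Poisson (this is precisely the excluded case $\delta_1\delta_2=\frac13$ of Proposition~\ref{dptm}, with $\delta_1=\delta$ and $\delta_2=\frac{1}{3\delta}$), so the defining conditions on $(\mathcal{A},\cdot)$ become $f_\delta=0$, $F_{1/(3\delta)}=0$, $G_{1/(3\delta)}=0$, and one verifies that this system is equivalent to $\{f_\delta=0,\ H=0\}$ by exhibiting the explicit linear combinations (with coefficients rational in $\delta$) in both directions; the argument through (\ref{idtp1}) above is shorter and avoids heavier bookkeeping.
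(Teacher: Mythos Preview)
Your proof is correct and takes a genuinely different route from the paper's. The paper works directly with the depolarized transposed identity $h_4^{1/(3\delta)}$: it writes $H$ as a linear combination of $h_2$, $h_3^{\delta}$ and $h_4^{1/(3\delta)}$, and conversely expresses $h_4^{1/(3\delta)}$ as an explicit linear combination (with coefficients in $\delta$) of the six permutations of $f_\delta$ together with $H$, then appeals to Proposition~\ref{delta}. Your argument instead first simplifies the two-multiplication problem, observing that within the variety of $\delta$-Poisson algebras the transposed $\tfrac{1}{3\delta}$-Poisson identity is equivalent to the cyclic identity (\ref{idtp1}); then, at the one-multiplication level, you use the decomposition $a\cdot b=a\circ b+[a,b]$ and the Jacobi identity (free from $f_\delta=0$) to get the clean relation $[x,y]\circ z+[y,z]\circ x+[z,x]\circ y=\tfrac12 H(x,y,z)$ modulo $f_\delta$. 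This avoids all coefficient-chasing with $f_\delta$ and gives a conceptually transparent reason why the single extra identity is $H=0$; the paper's approach, by contrast, yields the explicit linear combinations (useful if one wants them) and comes packaged with the independence examples $\mathcal{B}_1,\mathcal{B}_2$, which your argument does not address but which are not part of the proposition's statement.
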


	\begin{proof}
		Let $h_{2}(x,y,z)$, $h_{3}^{\delta }(x,y,z)$ and $h_{4}^{\frac{1}{3\delta}}(x,y,z)$ be as defined in Proposition~\ref{delta} and Proposition~\ref{delta-trans}. Due to Proposition~\ref{delta}, we just need to show that if $\left( \mathcal{A},\cdot \right) $ satisfies the identities ($\ref{id-delta-poisson}$) and \eqref{id3-td1} then $x\circ \left[ y,z\right] = \frac{1}{3\delta} \big( \left[ x\circ y,z
		\right]+ \left[ y,x\circ z\right] \big)$, i.e. $h_{4}^{\frac{1}{3\delta}}(x,y,z)=0$, and if $(\mathcal{A},\circ,
		\left[ \cdot,\cdot \right] )$ is a~$\delta$-mixed-Poisson algebra then $\left( \mathcal{A},\cdot \right) $ satisfies the identity \eqref{id3-td1}. Indeed, it is straightforward to verify that:
		\begin{longtable}{lcl}
			$H\left( x,y,z\right) $&$=$&$-\frac{1}{2}h_{2}(x,y,z)+\frac{1}{2\delta }
			h_{3}^{\delta }(x,y,z)-\frac{1}{2\delta }h_{3}^{\delta }(x,z,y)+\frac{3}{2}
			h_{4}^{\frac{1}{3\delta }}(x,y,z)$,\\
			$h_{4}^{\frac{1}{3\delta }}(x,y,z) $&$=$&$
			-\frac{1}{9\delta ^{2}} \big(
			f_{\delta}(x,y,z)-f_{\delta }(x,z,y)+f_{\delta }(y,x,z)-f_{\delta }(z,x,y)\big)+$\\
			\multicolumn{3}{r}{$\frac{1}{3\delta
			}f_{\delta }(y,z,x)-\frac{1}{3\delta }f_{\delta }(z,y,x)+\frac{2}{3}H\left(
			x,y,z\right)$.}
		\end{longtable}
		Now we consider the following algebras:
		\begin{longtable}{lllllll}
			$\mathcal{B}_1$ &$:$&$e_{3}\cdot e_{3}=e_{4}$,&$e_{1} \cdot e_{2}=e_{4}+e_{5}$,&$e_{2}\cdot e_{1}=e_{5}-e_{4}$,&$e_{1}\cdot e_{3}=-e_{1}$,\\
			&&$e_{3}\cdot e_{1}=e_{1}$,&$e_{2}\cdot e_{3}=e_{2}$,&$e_{3}\cdot e_{2}=-e_{2}$,&$e_{3}\cdot e_{4}=e_{4}\cdot e_{3}=e_{5}$.\\

			$\mathcal{B}_2$ &$:$&$e_{1}\cdot e_{1}=e_{1}$,&$ e_{1}\cdot e_{2}=e_{2}$.
		\end{longtable}
		$\mathcal{B}_1$ satisfies ($\ref{id-delta-poisson}$) but it does not satisfy ($\ref{id3-td1}$).
		$\mathcal{B}_2$
		satisfies ($\ref{id3-td1}$) but it does not satisfy ($\ref{id-delta-poisson}$). Hence, identities ($\ref{id-delta-poisson}$) and ($\ref{id3-td1}$) are independent.
	\end{proof}

	\section{Free algebras}\label{free}

	The description of a~basis of free algebras from a~variety of non-associative algebras is one of the classical problems. The first attempt in this direction was done by Shirshov and Hall. They found a~basis for free Lie algebras. Later, Chibrikov described elements from a~right normed basis for free algebras in~\cite{Chibrikov}. There are many results for bases for free algebras in other non-associative varieties (see, for example,~\cite{B}). A basis of free Poisson algebras was constructed in a~paper by Shestakov~\cite{sh93}. As we will see in the present Section, free $\delta$-Poisson algebras admit a~basis similar to a~basis constructed in~\cite{sh93}.

	\subsection{Free $\delta$-Poisson algebras}

	In this section, we construct a~basis of the free $\delta$-Poisson algebra $\delta$-$\textrm{P}(X)$ generated by a~countable set $X$ for $\delta\notin \{ 0,1\}$.

	\begin{lemma}\label{0id}
		An algebra $\delta$-$\textrm{P}(X)$ satisfies the following identities$:$
		\[
			\{a,bcd\}=ab\{c,d\}=a\{b,cd\}=\{ab,\{c,d\}\}=0
		\]
		and
		\[
			\{a,b\{c,\{d,e\}\}\}=\{a,\{b,c\{d,e\}\}\}=\{a,\{b,\{c,de\}\}\}=a\{b,\{c,de\}\}=0.
		\]
	\end{lemma}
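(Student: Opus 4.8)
The first four identities are just Proposition~\ref{antixyzt} applied to $\delta$-${\rm P}(X)$, which is a $\delta$-Poisson algebra with $\delta\neq 1$: taking $(x,y,z,t)=(a,b,c,d)$ in (\ref{antiP}) gives $\{a,bcd\}=0$, $\delta\{a,b\}cd=0$ (hence $ab\{c,d\}=\{c,d\}ab=0$, as $\delta\neq 0$), $\{a,bc\}d=0$ (hence $a\{b,cd\}=\{b,cd\}a=0$), and $\{ab,\{c,d\}\}=0$.

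For the remaining four identities the plan is to push every product outward by the Leibniz rule (\ref{deltpois}) until Proposition~\ref{antixyzt} applies. The easiest is $a\{b,\{c,de\}\}$: using $\{c,de\}=\delta(\{c,d\}e+d\{c,e\})$ and then (\ref{deltpois}) once more on $\{b,\{c,d\}e\}$ and $\{b,d\{c,e\}\}$ writes $\{b,\{c,de\}\}$ as $\delta^{2}$ times a sum of four monomials, each a single Lie bracket multiplied by one further factor; multiplying by $a$ then turns every summand into a Lie bracket multiplied by at least two factors, which is $0$ by $\{x,y\}zt=0$. For the other three --- using first that $b\{c,\{d,e\}\}=\frac{1}{\delta}\{bc,\{d,e\}\}-\{b,\{d,e\}\}c=-\{b,\{d,e\}\}c$, since $\{bc,\{d,e\}\}=0$ --- the same unfolding reduces everything to the single identity
\[
\{a,\{b,\{u,v\}w\}\}=0\qquad\text{for all elements }a,b,u,v,w.
\]

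To prove this I would first extract from the cyclic identity (\ref{cycl}) and Proposition~\ref{antixyzt} two auxiliary identities, valid for Lie brackets $\ell,\ell_{1},\ell_{2}$ and arbitrary $s,t$:
\[
s\{\ell,t\}+t\{\ell,s\}=0,\qquad \ell_{1}\{\ell_{2},t\}+\ell_{2}\{\ell_{1},t\}=0,
\]
the first obtained by expanding $\{\ell s,t\}+\{st,\ell\}+\{t\ell,s\}=0$ with (\ref{deltpois}) and dropping the middle term (zero by (\ref{antiP})), the second likewise from $\{\ell_{1}\ell_{2},t\}+\{\ell_{2}t,\ell_{1}\}+\{t\ell_{1},\ell_{2}\}=0$. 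Expanding $\{a,\{b,\{u,v\}w\}\}$ with (\ref{deltpois}) --- conveniently after the Jacobi step $\{a,\{b,P\}\}=\{b,\{a,P\}\}$, which is legitimate because $\{\{a,b\},P\}=0$ for $P=\{u,v\}w$ by (\ref{antiP}) --- gives a sum of four monomials: one pair of them cancels immediately by the second auxiliary identity, while the other pair, one term of shape (twice-iterated Lie bracket)$\cdot$(element) and one of shape (Lie bracket)$\cdot$(Lie bracket), has to be brought to a common form using the two auxiliary identities together with the Jacobi identity, after which it cancels as well. I expect this last cancellation to be the main obstacle: there is no conceptual difficulty beyond Proposition~\ref{antixyzt}, but the bookkeeping --- tracking which shape each of the several degree-$5$ monomials has, and how the auxiliary identities and Jacobi convert between them --- must be carried out carefully.
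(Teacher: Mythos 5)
Your handling of the first four identities is exactly the paper's: they are Proposition \ref{antixyzt} (with $\delta\neq 0$ used to cancel the factor $\delta$ in $\delta\{x,y\}zt=0$), and your derivation of $a\{b,\{c,de\}\}=0$ is also complete and correct. For the remaining three identities the paper offers no hand computation at all --- it only asserts they can be verified by the computer algebra package \cite{DotsHij} --- so you are attempting more than the paper does; but your sketch has a genuine gap, and it sits exactly where you suspect. The reduction of everything to $\{a,\{b,\{u,v\}w\}\}=0$ is circular: writing $L=\{u,v\}$ and expanding by (\ref{deltpois}) gives $\delta^{2}\bigl(L\{\{w,b\},a\}+\{L,a\}\{w,b\}+\{L,b\}\{w,a\}+\{\{L,b\},a\}w\bigr)$; the first pair indeed cancels by your second auxiliary identity, but the surviving pair equals $\delta^{-1}\{\{L,b\}w,\,a\}=\delta^{-1}\{a,\,w\{b,\{u,v\}\}\}$, which is a nonzero multiple of the identity $\{a,b\{c,\{d,e\}\}\}=0$ that you proposed to reduce to it. Your two auxiliary identities plus Jacobi, applied term by term as described, only reproduce this equivalence between the two target identities; they never force the common value to be zero.

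The gap is closable with the tools you already assembled, via one extra antisymmetrization. Set $N=\{d,e\}$ and $f(w,c):=w\{\{c,N\},a\}+\{w,a\}\{c,N\}$, so that $\{w\{c,N\},a\}=\delta f(w,c)$. Since $\{wc,N\}=0$ gives $w\{c,N\}=-c\{w,N\}$, one has $f(w,c)=-f(c,w)$. On the other hand, applying your first auxiliary identity to $w\{\{c,N\},a\}$ and $c\{\{w,N\},a\}$, then the Jacobi identity, then $s\{\ell_{1},\ell_{2}\}=-\ell_{1}\{s,\ell_{2}\}$ (which follows from $\{s\ell_{1},\ell_{2}\}=0$), and finally your second auxiliary identity to pull $N$ out of each product of two brackets, yields
\[
f(w,c)-f(c,w)\;=\;N\bigl(\{\{c,w\},a\}+\{\{w,a\},c\}+\{\{a,c\},w\}\bigr)\;=\;0,
\]
whence $f\equiv 0$ and $\{a,b\{c,\{d,e\}\}\}=0$. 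The identities $\{a,\{b,c\{d,e\}\}\}=0$ and $\{a,\{b,\{c,de\}\}\}=0$ then follow from this one together with the cancellations of products of two brackets that you already have. Without this (or an equivalent) extra step --- or without simply invoking the computer verification as the paper does --- the proposal does not establish the last three identities.
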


	\begin{proof}
		The first part was proved in Proposition~\ref{antixyzt}. The remaining parts of identities can be proved using computer algebra as a~software program~\cite{DotsHij}.
	\end{proof}

	We set
	\begin{align*}
		\mathcal{B}_1
        ={ }&{ } \langle x_i\rangle,
        \\
		\mathcal{B}_2
        ={ }&{ } \langle x_{i_1}x_{i_2},\{x_{j_1},x_{j_2}\} \mid i_1 \leq i_2, \ j_1 < j_2 \rangle,
        \\
		\mathcal{B}_3
        ={ }&{ } \langle x_{i_1}x_{i_2}x_{i_3}, \, \{x_{j_1},\{x_{j_2},x_{j_3}\}\}, \, x_{k_1}\{x_{k_2},x_{k_3}\} \mid i_1 \leq i_2\leq i_3, \, j_1\leq j_3, \, j_2 <j_3, \, k_2 < k_3\rangle,
        \\
		\mathcal{B}_4
        ={ }&{ } \langle x_{i_1}x_{i_2}x_{i_3}x_{i_4},\;x_{j_1}\{x_{j_2},\{x_{j_3},x_{j_4}\}\},\; \{x_{k_1},x_{k_2}\}\{x_{k_3},x_{k_4}\},\; \mathcal{C}_4 \;|\\
		\multicolumn{3}{r}{$
		i_1\leq i_2\leq i_3\leq i_4;\;j_1\leq j_2,\;j_1\leq j_3,\;j_1 <j_4,\;j_2\leq j_4,\;j_3 <j_4\;$}\\
		\multicolumn{3}{r}{$
		k_1 < k_2,\; k_3 <k_4,\; k_1 < k_3 \mbox{ or } k_1 =k_3 \mbox{ and } k_2\leq k_4\rangle$,}
	\end{align*}
	where $\mathcal{C}_{n}$ is a~right-normed basis of free Lie algebra of degree $n$ which is given in~\cite{Chibrikov}.

	For $n\geq 5$, let us define three types of monomial
	\begin{gather*}
		\mathcal{B}_{n}^1
        =\langle\{x_{i_1},\{x_{i_2},\{\cdots \{x_{i_{n-1}},x_{i_n}\}\cdots\}\}\}\rangle=\mathcal{C}_n,
        \\
		\mathcal{B}_{n}^2
        \!=\!\langle x_{i_1}\{x_{i_2},\{\cdots \{x_{i_{n-1}},x_{i_n}\}\cdots\}\} \mid i_1\leq i_2,\ldots, i_{n-1},i_n, \, \{x_{i_2},\{\cdots \{x_{i_{n-1}},x_{i_n}\}\}\}\in\mathcal{C}_{n-1}\rangle,
        \\
		\mathcal{B}_{n}^3
        =\langle x_{i_1}x_{i_2}\cdots x_{i_{n-1}}x_{i_n}\;|\; i_1\leq i_2\leq \ldots \leq i_{n-1}\leq i_n\rangle.
	\end{gather*}
	Also, we set
	\[
		\mathcal{B}_n =\mathcal{B}_n^1\cup\mathcal{B}_n^2\cup \mathcal{B}_n^3.
	\]

	\begin{theorem}
		The set $\cup_i\mathcal{B}_i$ is a~basis of algebra $\delta$-$\textrm{P}(X)$.
	\end{theorem}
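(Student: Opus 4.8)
The plan is to prove separately that $\bigcup_i\mathcal{B}_i$ spans $\delta$-${\rm P}(X)$ and that it is linearly independent. For the \emph{spanning} half I would induct on the degree $n$ (the number of generators), the cases $n\le 4$ being a finite check against the explicitly listed $\mathcal{B}_n$. A degree-$n$ monomial is $u\cdot v$ or $\{u,v\}$ with $\deg u,\deg v<n$, so by induction we may assume $u,v\in\bigcup_i\mathcal{B}_i$ and must rewrite the product or bracket. The engine is the defining identity (\ref{deltpois}), read as $\{x\cdot y,z\}=\delta(x\{y,z\}+\{x,z\}y)$, which, with antisymmetry of $\{\cdot,\cdot\}$, pushes every commutative product out of both slots of a bracket; combined with commutativity and associativity of $\cdot$, the reduction of pure bracket monomials to Chibrikov's right-normed basis $\mathcal{C}_n$ \cite{Chibrikov}, and the vanishing relations of Proposition \ref{antixyzt} and Lemma \ref{0id}, this collapses an arbitrary monomial onto four shapes: a sorted word in the $x_i$ (a copy of the polynomial algebra $\mathbb C[X]$, the $\mathcal{B}^3_n$); a pure Lie monomial (the $\mathcal{B}^1_n=\mathcal{C}_n$); a product $x_i\cdot w$ of one generator with a Lie monomial; and, only in degree $4$, a product $\{x_{k_1},x_{k_2}\}\{x_{k_3},x_{k_4}\}$ of two brackets. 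That nothing else survives is exactly the content of the vanishing relations: $\{x,y\}\cdot z\cdot t=0$ (from Proposition \ref{antixyzt}) and $\{x,y\}\cdot w=0$ for Lie $w$ of degree $\ge3$ remove all remaining products containing a bracket, while $\{x_ix_j,u\}=0$ together with (\ref{deltpois}) yields the antisymmetry $x_i\{x_j,u\}=-x_j\{x_i,u\}$ which, with the $\mathcal{C}_{n-1}$-normalisation of $w$, cuts the third shape down to the $\mathcal{B}^2_n$.

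For the \emph{independence} half I would realise $\delta$-${\rm P}(X)$ as a direct sum of vector spaces with manifest bases. Concretely, the spanning argument suggests the graded isomorphism
\[
\delta\text{-}{\rm P}(X)\ \cong\ \mathbb C[X]\ \oplus\ \mathrm{Lie}^{\ge2}(X)\ \oplus\ \big(V_1\otimes\mathrm{Lie}^{\ge2}(X)\big)\big/R\ \oplus\ S^2\big(\mathrm{Lie}^{2}(X)\big),
\]
where $V_1=\langle X\rangle$ and $R$ is spanned by the antisymmetrisation relations forced by (\ref{deltpois}); one then identifies $\mathcal{B}^3_n,\mathcal{B}^1_n,\mathcal{B}^2_n$ and the degree-$4$ double brackets with known bases of the respective summands (polynomial algebra, free Lie algebra, this explicit quotient module, and a symmetric square). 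The honest way to \emph{prove} the isomorphism is to build the right-hand side into a concrete $\delta$-Poisson algebra $W$ — with $\cdot$ and $\{\cdot,\cdot\}$ defined by the normal-form rules above (e.g.\ a product with two or more Lie-monomial arguments, or a bracket of two Lie monomials of total degree $\ge5$, is $0$) — and then to invoke freeness: there is a homomorphism $\delta$-${\rm P}(X)\to W$ fixing the generators, it is onto because the generators generate $W$, and it is injective because by the spanning half $\delta$-${\rm P}(X)$ is spanned by $\bigcup_i\mathcal{B}_i$, a set of cardinality $\dim W$. Hence $\bigcup_i\mathcal{B}_i$ is a basis.

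The main obstacle lies in this second half: spanning only gives the upper bound $\dim(\delta\text{-}{\rm P}(X))_n\le|\mathcal{B}_n|$, and the substance is the matching lower bound, i.e.\ checking that the proposed normal form hides no extra linear relations among the $\mathcal{B}$-monomials. This reduces to verifying the commutative-associative, Jacobi, and $\delta$-Leibniz axioms for the model $W$; these are formal on the polynomial and Lie parts but genuinely new on the mixed part, and — just as for Lemma \ref{0id} — the bounded-degree identities in $\mathrm{Lie}(X)$ that they require would be confirmed by computer algebra. A secondary, purely combinatorial point is to pin down the index inequalities defining $\mathcal{B}^2_n$ so that the corresponding monomials form exactly a basis of $\big(V_1\otimes\mathrm{Lie}_{n-1}(X)\big)/R$, which amounts to interleaving Chibrikov's conditions on $w\in\mathcal{C}_{n-1}$ with the relation $x_i\{x_j,u\}=-x_j\{x_i,u\}$.
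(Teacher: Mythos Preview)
Your proposal is correct and follows essentially the same two-stage strategy as the paper: first reduce arbitrary monomials to the listed shapes using the $\delta$-Leibniz rule together with the vanishing relations of Proposition~\ref{antixyzt} and Lemma~\ref{0id} (the paper carries out the reduction to $\mathcal{B}_n^2$ via a short explicit case analysis in degree~$5$ and then an inductive step, rather than solely via the antisymmetry $x_i\{x_j,u\}=-x_j\{x_i,u\}$ you isolate, but the content is the same), and then prove linear independence by exhibiting a concrete model $\delta$-Poisson algebra on the set $\bigcup_i\mathcal{B}_i$ and checking the axioms. The paper's independence half is exactly your construction of $W$, with the multiplication table written out case by case and the axiom verification declared a ``straightforward calculation''.
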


	\begin{proof}
		Firstly, let us show that any monomial of $\delta$-$\textrm{P}(X)$ can be written as a~linear combination of the set $\bigcup_i\mathcal{B}_i$. For monomials up to degrees $4$, the result can be verified by the software program~\cite{DotsHij}. From degree $5$, let us show that any monomial of $\delta$-$\textrm{P}(X)$ can be written as a~linear combination of monomials of the form
		\begin{equation}\label{rightnorm}
			x_{i_1}\star(x_{i_2}\star(\cdots(x_{i_{n-1}}\star x_{i_n})\cdots)),
		\end{equation}
		where $\star$ is $\cdot$ or $\{\cdot,\cdot\}$. It can be proved by induction on the length of monomials. The base of induction is monomials of degree $3$. By \eqref{deltpois} and~\cite{Chibrikov}, the spanning set of $\delta$-$\textrm{P}(X)$ are monomial of the following form:
		\[
			\{A_1\}\{A_2\}\cdots\{A_k\},
		\]
		where $\{A_i\}$ are pure right-normed Lie monomials. If $\{A_1\}$ is a~generator, then we write
		\[
			x_{i_1}(\{A_2\}\cdots\{A_k\}),
		\]
		and the monomial in the bracket can be rewritten in the form \eqref{rightnorm} by inductive hypothesis. If $\textrm{deg} \ (\{A_1\})>1$, then for $\{A_1\}\{A_2\}$, we have
		\begin{multline*}
			\{A_1\}\{A_2\}=\{x_{i_1},\{x_{i_2},\{\cdots \{x_{i_{n-1}},x_{i_n}\}\cdots\}\}\}\{x_{j_1},\{x_{j_2},\{\cdots \{x_{j_{m-1}},x_{j_m}\}\cdots\}\}\}=\\
			{\delta}^{-1}\{x_{i_1},\{x_{i_2},\{\cdots \{x_{i_{n-1}},x_{i_n}\}\cdots\}\}\{x_{j_1},\{x_{j_2},\{\cdots \{x_{j_{m-1}},x_{j_m}\}\cdots\}\}\}\}\\
			-\{x_{i_1},\{x_{j_1},\{x_{j_2},\{\cdots \{x_{j_{m-1}},x_{j_m}\}\cdots\}\}\}\}\}\{x_{i_2},\{\cdots \{x_{i_{n-1}},x_{i_n}\}\cdots\}.
		\end{multline*}
		Without loss of generality, we may assume that $m>n$. By the inductive hypothesis, the first monomial of the right side can be written in the form \eqref{rightnorm}. The second monomial of the right side can be written in the form \eqref{rightnorm} by repeating the given process several times. We repeat the given process with the obtained result. Finally, we obtain that any monomial can be written as a~linear combination of monomials of the form \eqref{rightnorm}.

		By Lemma~\ref{0id}, if we have a~monomial
		\[
			\{x_{i_1},x_{i_2}\star(\cdots(x_{i_{n-1}}\star x_{i_n})\cdots)\}
		\]
		and at least one operation $\star$ of this monomial is $\cdot$, then such monomial equal to $0$. By Lemma~\ref{0id}, if we have a~monomial
		\[
			x_{i_1}\cdot(x_{i_2}\star(\cdots(x_{i_{n-1}}\star x_{i_n})\cdots))
		\]
		and at least one operation $\star$ of this monomial is $\cdot$ and $\{\cdot,\cdot\}$, then such monomial equal to $0$. The remained nonzero monomials have the following forms:
		\[
			\{x_{i_1},\{x_{i_2},\{\cdots \{x_{i_{n-1}},x_{i_n}\}\cdots\}\}\},\;\;x_{i_1}\{x_{i_2},\{\cdots \{x_{i_{n-1}},x_{i_n}\}\cdots\}\},\;\;x_{i_1}x_{i_2}\cdots x_{i_n}.
		\]
		By~\cite{Chibrikov}, the first set of monomials corresponds to $\mathcal{B}_n^1$.

		Let us show that any monomial of the form
		\[
			x_{i_1}\{x_{i_2},\{\cdots \{x_{i_{n-1}},x_{i_n}\}\cdots\}\}
		\]
		can be written as a~linear combination of monomials from $\mathcal{B}_n^2$. We start induction on the length of the monomial. The base of induction $n=5$. We consider $3$ cases:
		\[
			x_{i_2}\{x_{i_1},\{x_{i_3},\{x_{i_4},x_{i_5}\}\}\},
			\;\;x_{i_2}\{x_{i_3},\{x_{i_1},\{x_{i_4},x_{i_5}\}\}\},
			\;\;x_{i_2}\{x_{i_3},\{x_{i_4},\{x_{i_1},x_{i_5}\}\}\},
		\]
		where $i_1 < \min \{i_2,i_3,i_4,i_5\}$. In the first case, by \eqref{deltpois} and Lemma~\ref{0id},
		\begin{align*}
			x_{i_2}\{x_{i_1},\{x_{i_3},\{x_{i_4},x_{i_5}\}\}\}
            &= {\delta}^{-1}\{x_{i_1}x_{i_2},\{x_{i_3},\{x_{i_4},x_{i_5}\}\}\}-x_{i_1}\{x_{i_2},\{x_{i_3},\{x_{i_4},x_{i_5}\}\}\} \\
            &=-x_{i_1}\{x_{i_2},\{x_{i_3},\{x_{i_4},x_{i_5}\}\}\}
            \in \mathcal{B}_n^2.
		\end{align*}
		In the second case, by \eqref{deltpois} and Lemma~\ref{0id},
		\begin{align*}
			x_{i_2}\{x_{i_3},\{x_{i_1},\{x_{i_4},x_{i_5}\}\}\}
            &={\delta}^{-1}\{x_{i_3},x_{i_2}\{x_{i_1},\{x_{i_4},x_{i_5}\}\}\}-\{x_{i_3},x_{i_2}\}\{x_{i_1},\{x_{i_4},x_{i_5}\}\} \\
            &=-\{x_{i_3},x_{i_2}\}\{x_{i_1},\{x_{i_4},x_{i_5}\}\} \\
            &=- {\delta}^{-1}\{x_{i_1},\{x_{i_4},x_{i_5}\}\{x_{i_3},x_{i_2}\}\}+\{x_{i_4},x_{i_5}\}\{x_{i_1},\{x_{i_3},x_{i_2}\}\} \\
			&=- \delta^{-2}\{x_{i_1}, \{\{x_{i_4},x_{i_5}\}x_{i_3},x_{i_2}\}\} + {\delta}^{-1}\{x_{i_1}, \{\{x_{i_4},x_{i_5}\},x_{i_2}\}x_{i_3}\} \\
            &{\quad}+\{x_{i_4},x_{i_5}\}\{x_{i_1},\{x_{i_3},x_{i_2}\}\} \\
			&=\{x_{i_4},x_{i_5}\}\{x_{i_1},\{x_{i_3},x_{i_2}\}\} \\
			&={\delta}^{-1}\{x_{i_1}\{x_{i_4},x_{i_5}\},\{x_{i_3},x_{i_2}\}\}-x_{i_1}\{\{x_{i_4},x_{i_5}\},\{x_{i_3},x_{i_2}\}\} \\
			&=-x_{i_1}\{\{x_{i_4},x_{i_5}\},\{x_{i_3},x_{i_2}\}\}
            \in \mathcal{B}_n^2.
		\end{align*}
		In the third case, by \eqref{deltpois}, second case and Lemma~\ref{0id},
		\begin{align*}
			x_{i_2}\{x_{i_3},\{x_{i_4},\{x_{i_1},x_{i_5}\}\}\}
            &=\{x_{i_1},x_{i_5}\}\{x_{i_4},\{x_{i_3},x_{i_2}\}\} \\
            &={\delta}^{-1}\{x_{i_1}\{x_{i_4},\{x_{i_3},x_{i_2}\}\},x_{i_5}\}-x_{i_1}\{\{x_{i_4},\{x_{i_3},x_{i_2}\}\},x_{i_5}\} \\
            &=-x_{i_1}\{\{x_{i_4},\{x_{i_3},x_{i_2}\}\},x_{i_5}\}
            \in \mathcal{B}_n^2.
		\end{align*}
		In the general case, it is enough to prove the statement for a~monomial
		\[
			x_{i_2}\{x_{i_3},\{\cdots \{x_{i_{1}},x_{i_n}\}\cdots\}\},
		\]
		where $i_1 < \min\{i_2,\ldots,i_n\}$. By \eqref{deltpois}, Lemma~\ref{0id} and inductive hypothesis,
		\begin{align*}
			x_{i_2}&\{x_{i_3},\{\cdots \{x_{i_{1}},x_{i_n}\}\cdots\}\} \\
            &= {\delta}^{-1}\{x_{i_3},x_{i_2}\{\cdots \{x_{i_{1}},x_{i_n}\}\cdots\}\}-\{\cdots \{x_{i_{1}},x_{i_n}\}\cdots\}\{x_{i_3},x_{i_2}\} \\
			&= -\{x_{i_3},x_{i_2}\}\{\cdots \{x_{i_{1}},x_{i_n}\}\cdots\} \\
            &=\ x_{i_{1}}\{\cdots \{\{x_{i_3},x_{i_2}\},x_{i_n}\}\cdots\}
            \in \mathcal{B}_n^2.
		\end{align*}
		So, the second set of monomials corresponds to $\mathcal{B}_n^2$. By associative and commutative identities of operation $\cdot$, the third set of monomials corresponds to $\mathcal{B}_n^3$. We proved that the set $\cup_i\mathcal{B}_i$ is a~spanning set of algebra $\delta$-$\textrm{P}(X)$.

		Now, let us prove the linear independence of the set $\cup_i\mathcal{B}_i$. Let $A(X)$ be an algebra with a~basis $\cup_i\mathcal{B}_i$ and multiplications $\cdot$ and $\{\cdot,\cdot\}$. We define multiplication on monomials $\cup_i\mathcal{B}_i$ up to degree $5$ that is consistent with identities of Lie algebra, commutative associative algebra and identity \eqref{deltpois}. From degree $6$, if $A=x_j$ and $B\in\cup_i\mathcal{B}_i$, then $A\cdot B$ and $\{A,B\}$ are defined as follows:
		\[
			\begin{gathered}
				\begin{cases}
					x_j\cdot \{x_{i_1},\{x_{i_2},\{\cdots \{x_{i_{n-1}},x_{i_n}\}\cdots\}\}\}\ =\ x_{i_k} \{x_{i_1},\{x_{i_2},\{\cdots \{x_{i_{n-1}},x_{i_n}\}\cdots\}\}\},\text{$i_k$ has}\\
					\text{a~minimal index and remained part as in pure Lie algebra,}\\
					x_j\cdot (x_{i_1}\{x_{i_2},\{\cdots \{x_{i_{n-1}},x_{i_n}\}\cdots\}\})\ = \ 0, \\
					x_j\cdot (x_{i_1}x_{i_2}\cdots x_{i_{n-1}}x_{i_n})\ =\ x_{i_1}x_{i_2}\cdots x_j \cdots x_{i_{n-1}}x_{i_n},\; i_1\leq i_2\leq\cdots\leq j\leq\cdots\leq i_n,\\
					\{x_j,\{x_{i_1},\{x_{i_2},\{\cdots \{x_{i_{n-1}},x_{i_n}\}\cdots\}\}\}\}\;\; \text{as in pure Lie algebra}\\
					\{x_j,x_{i_1}\{x_{i_2},\{\cdots \{x_{i_{n-1}},x_{i_n}\}\cdots\}\}\} \ =\ 0, \\
					\{x_j,x_{i_1}x_{i_2}\cdots x_{i_{n-1}}x_{i_n}\}\ = \ 0. \\
				\end{cases}
			\end{gathered}
		\]
		Let $b_i, b^*_i\in \mathcal{B}_{n}^i$ and $\textrm{deg} \ (b_i ),\textrm{deg} \ (b^*_i )>1$. Then
		\[
			\begin{gathered}
				\begin{cases}
					b_1\cdot b^*_1\ =\ x_{i_k} \{x_{i_1},\{x_{i_2},\{\cdots \{x_{i_{n-1}},x_{i_n}\}\cdots\}\}\},\;\; \text{$i_k$ has a~minimal index and}\\
					\text{remained part as in pure Lie algebra,}\\
					b_1\cdot b^*_2\ =\ b_1\cdot b^*_3 \ =\ b_2\cdot b^*_2\ =\ b_2\cdot b^*_3\ =\ 0,\\
					b_3\cdot b^*_3\;\; \text{as in pure commutative associative algebra},\\
					\{b_1,b^*_1\}\;\; \text{as in pure Lie algebra},\\
					\{b_1,b^*_2\}\ =\ \{b_1,b^*_3\} \ =\ \{b_2,b^*_2\}\ =\ \{b_2,b^*_3\} \ =\ \{b_3,b^*_3\} \ = \ 0.
				\end{cases}
			\end{gathered}
		\]
		By straightforward calculation, we can check that an algebra $A(X)$ satisfies Jacobi, associative and \eqref{deltpois} identities. It remains to note that $A(X)\cong\delta$-$\textrm{P}(X)$ which gives linear independence of monomials $\cup\mathcal{B}_i$.
	\end{proof}

	\begin{corollary}
		For $n\geq 5$, we have
		\[
			\textrm{dim}\big(\delta\textrm{-}\textrm{P}(n)\big)=(n-1)!+(n-2)!+1
		\]
		where $\delta\textrm{-}\textrm{P}(n)$ is $n$-th component of the operad $\delta\textrm{-}\textrm{P}$.
	\end{corollary}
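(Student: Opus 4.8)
The plan is to read off the dimension directly from the basis constructed in Theorem \ref{basisAnti}. Recall that the arity-$n$ component $\delta\text{-}{\rm P}(n)$ of the operad governed by the variety of $\delta$-Poisson algebras is, as a vector space, the multilinear degree-$n$ component of the free algebra $\delta\text{-}{\rm P}(X)$ on the generators $x_1,\dots,x_n$, i.e. the span of those monomials in which each of $x_1,\dots,x_n$ occurs exactly once. So the first step is to intersect the basis $\bigcup_i\mathcal{B}_i$ with the multilinear part of degree $n$; by Theorem \ref{basisAnti} this yields a basis of $\delta\text{-}{\rm P}(n)$, and for $n\geq 5$ it consists precisely of the multilinear elements of $\mathcal{B}_n=\mathcal{B}_n^1\cup\mathcal{B}_n^2\cup\mathcal{B}_n^3$. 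It then remains to count the multilinear elements in each of the three families.

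For $\mathcal{B}_n^1=\mathcal{C}_n$: this is a basis of the degree-$n$ component of the free Lie algebra, whose multilinear part has dimension $\dim{\rm Lie}(n)=(n-1)!$ by the classical Witt formula. For $\mathcal{B}_n^2$, a multilinear monomial has the form $x_{i_1}\{x_{i_2},\{\cdots\{x_{i_{n-1}},x_{i_n}\}\cdots\}\}$ with all indices distinct; the defining constraint $i_1\leq i_2,\dots,i_n$ forces $i_1=1$, while the inner bracket ranges freely over the multilinear basis $\mathcal{C}_{n-1}$ of the free Lie algebra on $\{x_2,\dots,x_n\}$, giving $\dim{\rm Lie}(n-1)=(n-2)!$ elements. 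For $\mathcal{B}_n^3$, the multilinear monomials $x_{i_1}x_{i_2}\cdots x_{i_n}$ with $i_1\leq i_2\leq\cdots\leq i_n$ and all indices distinct reduce to the single sorted monomial $x_1x_2\cdots x_n$, contributing $1$.

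Since the three families are disjoint within the basis $\bigcup_i\mathcal{B}_i$, summing gives
\[
\dim(\delta\text{-}{\rm P}(n))=(n-1)!+(n-2)!+1,
\]
as claimed. There is no genuine obstacle here: the only points requiring care are the identification of the operad component with the multilinear subspace (standard) and the recollection that the multilinear dimensions of the free Lie algebra on $n$ and $n-1$ generators are $(n-1)!$ and $(n-2)!$ respectively; everything else is the bookkeeping of imposing multilinearity on the index inequalities defining $\mathcal{B}_n^1,\mathcal{B}_n^2,\mathcal{B}_n^3$.
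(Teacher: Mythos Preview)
Your proof is correct and follows exactly the approach the paper intends: the corollary is stated without proof immediately after Theorem \ref{basisAnti}, so the implied argument is precisely the counting of multilinear elements in $\mathcal{B}_n^1\cup\mathcal{B}_n^2\cup\mathcal{B}_n^3$ that you carried out.
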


	\begin{remark}
		The basis of the free $0$-Poisson algebra is the same as the basis of the free Poisson algebra. It is easy to see that the spanning elements of ${0}\textrm{-}\textrm{P}(X)$ are
		\[
			\{A_1\}\{A_2\}\cdots\{A_k\},
		\]
		where $\{A_i\}$ is a~pure basis monomial of the free Lie algebra. It is not difficult to see that all compositions are trivial which gives linear independence of these monomials.
	\end{remark}

	\subsection{Free mixed-Poisson algebras}

	Let us denote by $\mathcal{V}_n$ and $\textrm{MP}(X)$ a~basis monomials of the free Lie algebra and a~free mixed-Poisson algebra generated by countable set $X$, respectively. We set
	\[
		\mathcal{W}_n =\mathcal{V}_n\cup \{x_{i_1} x_{i_2}\cdots x_{i_n}\},
	\]
	where $i_1\leq i_2\leq\ldots\leq i_n$.

	\begin{theorem}\label{baseSP}
		The set $\cup_i\mathcal{W}_i$ is a~basis of the free mixed-Poisson algebra.
	\end{theorem}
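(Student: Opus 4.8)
The plan is to establish the theorem in two stages: first that $\cup_{i}\mathcal{W}_{i}$ spans the free mixed-Poisson algebra ${\rm MP}(X)$, and then that it is linearly independent, the latter via an explicit model algebra. For spanning, I would argue that in any mixed-Poisson algebra the two defining identities $\{xy,z\}=0$ and $x\{y,z\}=0$, together with commutativity and associativity of $\cdot$ and antisymmetry of $\{\cdot,\cdot\}$, collapse almost all monomials. Proceeding by induction on degree on a monomial $M=M_{1}\star M_{2}$ with $\star\in\{\cdot,\{\cdot,\cdot\}\}$: if $\star=\cdot$ and one of $M_{1},M_{2}$ is a pure iterated bracket of degree $\geq 2$ (so, at the top, a bracket), then, after moving that factor to the right slot by commutativity if needed, $M=0$ by $x\{y,z\}=0$; if $\star=\{\cdot,\cdot\}$ and one of $M_{1},M_{2}$ is a non-trivial $\cdot$-product, then $M=0$ by $\{xy,z\}=0$ and antisymmetry. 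Hence the only monomials that can survive are the pure $\cdot$-products of generators, which by commutativity and associativity equal the sorted monomials $x_{i_{1}}\cdots x_{i_{n}}$ with $i_{1}\leq\cdots\leq i_{n}$, and the pure iterated brackets in $X$, which are spanned by $\mathcal{V}_{n}$ by the classical theory of free Lie algebras (e.g.\ the right-normed basis of \cite{Chibrikov}). Thus $\cup_{i}\mathcal{W}_{i}$ spans ${\rm MP}(X)$.

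For linear independence I would build a model. Let $V=\mathrm{span}(X)$, let ${\rm Lie}(X)=V\oplus L'$ be the free Lie algebra on $X$ with $L'=\bigoplus_{n\geq 2}L_{n}$, and let $C=V\oplus C'$ be the free commutative associative algebra without unit on $X$ with $C'=\bigoplus_{n\geq 2}C_{n}$. Set $A:=V\oplus L'\oplus C'$ as a vector space, let $\rho\colon A\to{\rm Lie}(X)$ be the projection killing the $C'$-summand (the identity on $V\oplus L'$), and let $\pi\colon A\to C$ be the projection killing the $L'$-summand (the identity on $V\oplus C'$). Define products on $A$ by $a\cdot b:=\pi(a)\cdot_{C}\pi(b)$ and $\{a,b\}:=[\rho(a),\rho(b)]$. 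Then $(A,\cdot)$ is commutative associative and $(A,\{\cdot,\cdot\})$ is a Lie algebra, both because $\pi$ and $\rho$ restrict to the identity on the subalgebras $C$ and ${\rm Lie}(X)$ into which the respective products land (note $C\cdot_{C}C\subseteq C'$ and $[{\rm Lie}(X),{\rm Lie}(X)]\subseteq L'$); and the mixed-Poisson identities hold because $\{b,c\}\in L'=\ker\pi$ forces $a\cdot\{b,c\}=0$, while $a\cdot b\in C'=\ker\rho$ forces $\{a\cdot b,c\}=0$. So $A$ is a mixed-Poisson algebra generated by $X$, and the canonical surjection $\phi\colon{\rm MP}(X)\to A$, $x_{i}\mapsto x_{i}$, sends each $w\in\mathcal{V}_{n}$ to the corresponding basis vector of $L_{n}$ and each sorted monomial $x_{i_{1}}\cdots x_{i_{n}}$ ($n\geq 2$) to the corresponding basis vector of $C_{n}$. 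These images form a basis of $A$, so $\cup_{i}\mathcal{W}_{i}$ is linearly independent in ${\rm MP}(X)$ (and $\phi$ is in fact an isomorphism). Together with the spanning statement this proves the theorem.

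The routine part is the verification that $A$ satisfies commutativity, associativity and the Jacobi identity; all of this reduces to the observation that $\pi$ and $\rho$ are identities on the relevant subalgebras. The point that requires care, and the reason $A$ is presented as $V\oplus L'\oplus C'$ rather than as ${\rm Lie}(X)\oplus C$, is the overlap in degree $1$: the generators $x_{i}$ must be treated as lying simultaneously in the ``Lie part'' and the ``commutative part'', so that $\mathcal{W}_{1}=\mathcal{V}_{1}$ and no generator is double counted; correspondingly, in the spanning reduction a degree-$1$ monomial must be allowed to belong to both families. Everything else follows from the two defining identities of mixed-Poisson algebras together with the known bases of the free Lie and the free commutative associative algebras on $X$.
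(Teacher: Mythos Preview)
Your proof is correct and follows essentially the same strategy as the paper: show spanning from the identities $\{xy,z\}=0$ and $x\{y,z\}=0$, then prove linear independence by constructing a model mixed-Poisson algebra on the putative basis. Your construction of the model via $A=V\oplus L'\oplus C'$ with projections $\pi,\rho$ is a cleaner formalisation of exactly what the paper does when it declares the ``Lie--Lie'' and ``commutative--commutative'' products to be the obvious ones and all mixed products to be zero; in particular your explicit handling of the degree-$1$ overlap (generators simultaneously Lie and commutative) sharpens a point the paper leaves implicit.
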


	\begin{proof}
		By definition of mixed-Poisson algebra, it has the following identities:
		\[
			[a b,c]
			=0,\;\;\;[a, b] c=0.
		\]
		It gives that $\cup_i\mathcal{W}_i$ is a~spanning set of $\textrm{MP}(X)$.

		Now, let us prove the linear independence of the set $\cup_i\mathcal{W}_i$. Let $A(X)$ be an algebra with a~basis $\cup_i\mathcal{W}_i$ and operations $\cdot$ and $\{\cdot,\cdot\}$ as follows:
		\[
			\begin{gathered}
				\begin{cases}
					\{L_i,L_j\}\;\; \textit{is defined as in free Lie algebra,}\\
					C_i\cdot C_j\;\; \textit{is defined as in free commutative associative algebra,}\\
					C_i\cdot L_i =L_i\cdot C_i =\{C_i,L_i\}=\{L_i,C_i\}=0,
				\end{cases}
			\end{gathered}
		\]
		where $L_i,L_j\in\bigcup_i\mathcal{V}_i$ and $C_i,C_j$ are monomial only with operation $\cdot$. It is easy to verify that an algebra $A(X)$ satisfies all identities of algebra $\textrm{MP}(X)$. It remains to note that $A(X)\cong \textrm{MP}(X)$ which gives linear independence of monomials $\cup\mathcal{W}_i$.
	\end{proof}

	\begin{corollary}
		For arbitrary $n$, we have
		\[
			\textrm{dim}(\textrm{MP}(n))=(n-1)!+1
		\]
		where $\textrm{MP}(n)$ is $n$-th component of the operad $\textrm{MP}$.
	\end{corollary}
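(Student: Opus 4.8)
The plan is to identify ${\rm MP}(n)$ with the multilinear component of the free mixed-Poisson algebra ${\rm MP}(X)$ on $X=\{x_1,x_2,\dots\}$, i.e. the subspace spanned by those basis monomials in which each of $x_1,\dots,x_n$ appears exactly once, and then to count such monomials using the basis $\bigcup_i\mathcal{W}_i$ produced in Theorem \ref{baseSP}. This identification is the standard one for an operad governing a variety defined by multilinear identities; it is legitimate here because every element of $\bigcup_i\mathcal{W}_i$ is homogeneous in the multidegree of the generators, so the part of multidegree $(1,\dots,1)$ in $x_1,\dots,x_n$ is a basis of the multilinear component.

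Next I would split the count along $\mathcal{W}_n=\mathcal{V}_n\cup\{x_{i_1}x_{i_2}\cdots x_{i_n}\mid i_1\le\cdots\le i_n\}$. Every basis monomial is either a pure Lie monomial or a pure product under $\cdot$, since in a mixed-Poisson algebra $\{ab,c\}=\{a,b\}c=0$ kills any monomial mixing the two operations; hence the multilinear basis is the disjoint union of the multilinear Lie monomials and the multilinear commutative-associative monomials. Among the latter, with the normalization $i_1\le\cdots\le i_n$ only $x_1x_2\cdots x_n$ is multilinear in $x_1,\dots,x_n$, contributing the summand $1$. Among the former, the multilinear monomials of degree $n$ form a basis of the $n$-th component $\mathrm{Lie}(n)$ of the Lie operad, whose dimension is the classical value $(n-1)!$. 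Adding the two contributions yields $\dim{\rm MP}(n)=(n-1)!+1$.

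There is no real obstacle here: the substantive work was already done in Theorem \ref{baseSP} and in the known computation of $\dim\mathrm{Lie}(n)$, and the present argument merely harvests that information, exactly as in the analogous count for $\delta\textrm{-}{\rm P}(n)$ above, where the extra family $\mathcal{B}_n^2$ contributed the additional $(n-2)!$. The one point worth a line of justification is the homogeneity claim in the first paragraph, which is visible directly from the shape of the monomials in $\bigcup_i\mathcal{W}_i$; and, as usual for operad components, one restricts to $n\ge 2$ so that the Lie family and the commutative-associative family do not overlap on a single generator.
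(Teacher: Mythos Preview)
Your argument is correct and is precisely the intended one: the paper states this as an immediate corollary of Theorem~\ref{baseSP} with no further proof, and your count of the multilinear part of $\mathcal{W}_n$ as $(n-1)!$ Lie monomials plus the single commutative monomial $x_1x_2\cdots x_n$ is exactly what is meant. Your remark that one should take $n\ge 2$ (since for $n=1$ the two families coincide on a single generator and the formula would give $2$) is a valid refinement that the paper's ``for arbitrary $n$'' glosses over.
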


	\section{Operads}\label{dualoper}

	\subsection{Dual operad of the $\delta$-Poisson operad}

	In this section, as given in~\cite{GK94}, we compute the Koszul dual operad $\delta$-$\textrm{P}^!$, where the operad $\delta$-$\textrm{P}$ is governed by the variety of $\delta$-Poisson algebras. We denote by $\mu$ and $\nu$ operations of $\delta$-Poisson algebra, namely, $\mu=\{x_1,x_2\}$ and $\nu=x_1\cdot x_2$. Then the operad $\delta$-$\textrm{P}(2)= \textrm{span}\{\mu,\nu\}$. Any multilinear term of degree $3$ can be written as a~composition of $\delta$-$\textrm{P}(2)$ and action from ${\mathbb S}_3$. For example,
	\begin{longtable}{rclrcl}
		$(x_1\cdot x_2 )\cdot x_3 $&$=$&$\nu\otimes \nu$,& $x_1\cdot (x_2\cdot x_3 )$&$=$&$(13)\nu^{(12)}\otimes \nu^{(12)}$,\\
		$\{\{x_1,x_2\},x_3\} $&$=$&$\mu\otimes \mu$,&$ \{x_1,\{x_2,x_3\}\}$&$=$&$(13)\mu^{(12)}\otimes \mu^{(12)}$,\\
		$\{x_1\cdot x_2,x_3\}$&$=$&$\mu\otimes \nu$,&$ x_1\cdot \{x_2,x_3\}$&$=$&$(13)\mu^{(12)}\otimes \nu^{(12)}$,\\
		$\{x_1,x_2\}\cdot x_3$&$=$&$\mu\otimes \nu$,&$ \{x_1,x_2\cdot x_3\}$&$=$&$(13)\mu^{(12)}\otimes \nu^{(12)}$.
	\end{longtable}
	Remained terms can be represented by the given elements under actions $(12)$ and $(23)$.

	Let us define an order on basis monomials of $\delta$-$\textrm{P}(3)$, which contains exact one operation $\mu$ and $\nu$, as follows:
	\[
		\{xy,z\},\{xz,y\},\{yz,x\},\{x,y\}z,\{x,z\}y,\{y,z\}x.
	\]
	Relative to the defined order, the vectors representing the relation $\{xy,z\}-\delta\{x,z\}y-\delta\{y,z\}x$ of $\delta$-$\textrm{P}$
	may be presented by the rows of the following matrix:
	\[
		\begin{matrix}
			1 &0 &0 & 0 &-\delta & -\delta \\
			0 &1 &0 & -\delta &0 & \delta \\
			0 &0 &1 & \delta &\delta & 0 \\
		\end{matrix}
	\]
	Since all relations of the operad $\delta$-$\textrm{P}$ are homogeneous, it is enough to compute the sign-twisted orthogonal complement $U^\perp \subset O_3^\vee $
	for the ${\mathbb S}_3$-module $U$ spanned by these vectors~\cite{GK94}. Finally, we obtain the defining relations of the Koszul dual operad $\delta$-$\textrm{P}^!$. It turns out that a~$\delta$-$\textrm{P}^!$ algebra has two operations $\mu^\vee$ and $\nu^\vee$, where the first one is commutative associative and the second one is a~multiplication of Lie algebra with additional identities
	\begin{longtable}{rcl}
		$-\delta\{x,y\}z+\delta\{y,z\}x+\{xz,y\} $&$=$&$0$,\\
		$\delta\{x,z\}y+\delta\{y,z\}x+\{xz,y\}+\{yz,x\} $&$=$&$0$,\\
		$\{xy,z\}+\{xz,y\}+\{yz,x\} $&$=$&$0$.\\
	\end{longtable}
	All given identities are equivalent to the identity
	\[
		\{xy,z\}-\delta\{x,z\}y-\delta\{y,z\}x=0
	\]
	which gives the following result:

	\begin{theorem}\label{26}
		For any $\delta$, the operad $\delta$-$\textrm{P}$ is a~self-dual.
	\end{theorem}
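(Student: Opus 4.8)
The plan is to organize the Ginzburg--Kapranov quadratic-duality computation \cite{GK94} so that self-duality falls out as a relabeling of generators, using the explicit description of $\delta$-${\rm P}^!$ obtained above. Recall that for a binary quadratic operad $\mathcal{P}=\mathcal{P}(E,R)$ with generators $E=\mathcal{P}(2)$ and quadratic relations $R$ in arity $3$, one has $\mathcal{P}^!=\mathcal{P}(E^{\vee},R^{\perp})$, where $E^{\vee}=E^{*}\otimes\mathrm{sgn}$ is the linear dual of the generators twisted by the sign representation and $R^{\perp}$ is the annihilator of $R$ under the canonical $\mathbb{S}_3$-invariant pairing in arity $3$. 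For $\delta$-${\rm P}$ we have $\mathcal{P}(2)=\mathrm{span}\{\mu,\nu\}$, where $\mu=\{x_1,x_2\}$ spans a copy of the sign representation and $\nu=x_1\cdot x_2$ spans a copy of the trivial representation of $\mathbb{S}_2$. Since tensoring by $\mathrm{sgn}$ interchanges these two representations, $\delta$-${\rm P}^!$ is again generated by one symmetric binary operation $\mu^{\vee}$ and one antisymmetric binary operation $\nu^{\vee}$. This is the first structural coincidence with $\delta$-${\rm P}$, which is likewise generated by a symmetric operation and an antisymmetric one.

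Next I would split the arity-$3$ relations by operation content. The free operad on $E$ in arity $3$ decomposes, as an $\mathbb{S}_3$-module, into the part built from two copies of $\nu$, the part built from two copies of $\mu$, and the $6$-dimensional mixed part built from one $\mu$ and one $\nu$ --- the last having the ordered basis $\{xy,z\},\{xz,y\},\{yz,x\},\{x,y\}z,\{x,z\}y,\{y,z\}x$ used above. The defining relations of $\delta$-${\rm P}$ respect this splitting: associativity of $\nu$ lies in the first summand, the Jacobi identity for $\mu$ in the second, and the $\delta$-Leibniz relation $\{xy,z\}-\delta\{x,z\}y-\delta\{y,z\}x=0$ together with its $\mathbb{S}_3$-translates in the mixed summand. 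Because the Koszul pairing also respects this decomposition, $R^{\perp}$ may be computed one summand at a time. In the all-$\nu$ block the orthogonal complement of commutative associativity is the Lie relation, so $\nu^{\vee}$ is a Lie bracket; dually, in the all-$\mu$ block the orthogonal complement of the Jacobi identity is commutative associativity, so $\mu^{\vee}$ is commutative associative. These are precisely the two non-mixed assertions recorded above, and they already match $\delta$-${\rm P}$ term for term.

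The genuinely $\delta$-dependent part is the mixed block, and this is where the main work lies. Here one forms the $3\times6$ matrix whose rows are the coordinate vectors of the $\delta$-Leibniz relation and its two $\mathbb{S}_3$-translates in the ordered basis above (the displayed matrix), and computes the sign-twisted orthogonal complement $U^{\perp}\subset O_3^{\vee}$. What must be verified is that $U^{\perp}$ is spanned by the three identities
\[
-\delta\{x,y\}z+\delta\{y,z\}x+\{xz,y\},\qquad \delta\{x,z\}y+\delta\{y,z\}x+\{xz,y\}+\{yz,x\},\qquad \{xy,z\}+\{xz,y\}+\{yz,x\},
\]
and then that these three are all equivalent to the single relation $\{xy,z\}-\delta\{x,z\}y-\delta\{y,z\}x=0$, now read with $\{\cdot,\cdot\}$ denoting $\nu^{\vee}$ and juxtaposition denoting $\mu^{\vee}$ --- that is, the $\delta$-Leibniz identity for the pair $(\nu^{\vee},\mu^{\vee})$. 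The hard part of the whole argument is exactly this: keeping the sign twist in the Koszul pairing straight as it propagates from arity $2$ to arity $3$, and checking that the three displayed vectors do span a $3$-dimensional space equal to $U^{\perp}$ for every $\delta$ --- the degenerate values $\delta=0$ and $\delta=1$ included, where the relation specializes but its $\mathbb{S}_3$-span stays $3$-dimensional, so nothing collapses.

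It then remains to package the outcome as an isomorphism. Define $\Phi\colon\delta$-${\rm P}^!\to\delta$-${\rm P}$ on generators by $\mu^{\vee}\mapsto\nu$ and $\nu^{\vee}\mapsto\mu$ (symmetric to symmetric, antisymmetric to antisymmetric). By the two preceding paragraphs, $\Phi$ carries the defining relations of $\delta$-${\rm P}^!$ --- commutative associativity of $\mu^{\vee}$, the Jacobi identity for $\nu^{\vee}$, and the mixed relation $\{xy,z\}-\delta\{x,z\}y-\delta\{y,z\}x=0$ --- respectively to commutative associativity of $\nu$, the Jacobi identity for $\mu$, and the $\delta$-Leibniz identity $\{xy,z\}=\delta(x\{y,z\}+\{x,z\}y)$, which are exactly the defining relations of $\delta$-${\rm P}$. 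Hence $\Phi$ descends to an isomorphism of quadratic operads, and $\delta$-${\rm P}$ is self-dual for every $\delta$.
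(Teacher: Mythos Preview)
Your proposal is correct and follows essentially the same approach as the paper: compute the Koszul dual via the Ginzburg--Kapranov recipe, treat the mixed $(\mu,\nu)$ block by the $3\times6$ matrix displayed in the paper, and observe that the sign-twisted orthogonal complement yields relations equivalent to the same $\delta$-Leibniz identity. Your write-up is more expansive---you make the pure-$\mu$ and pure-$\nu$ blocks and the final isomorphism $\Phi$ explicit, whereas the paper summarizes these in a sentence---but the argument is the same.
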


	\begin{remark}
		Analogical results take the place of the transposed $\delta$-Poisson operad, i.e., for any $\delta$, the transposed $\delta$-Poisson operad is self-dual.
	\end{remark}

	It is well-known that the Poisson operad is Koszul. However, for the anti-Poisson operad, we obtain the opposite result:

	\begin{theorem}\label{27}
		The operad $\AP$ is not Koszul, where the operad $\AP$ is governed by the variety of anti-Poisson algebras.
	\end{theorem}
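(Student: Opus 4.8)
The plan is to apply the Ginzburg--Kapranov numerical criterion for Koszulness \cite{GK94}, working over a field of characteristic zero. Recall that for a binary quadratic operad $\mathcal{P}$ one forms the Euler series $\chi_{\mathcal{P}}(t)=\sum_{n\geq 1}(-1)^{n}\tfrac{\dim\mathcal{P}(n)}{n!}t^{n}$, and that if $\mathcal{P}$ is Koszul then $\chi_{\mathcal{P}}$ and $\chi_{\mathcal{P}^{!}}$ are mutually inverse under composition, $\chi_{\mathcal{P}}(\chi_{\mathcal{P}^{!}}(t))=t$; as a sanity check one may verify this on $\mathcal{P}=\Lie$, $\mathcal{P}^{!}=\Com$, where $\chi_{\Lie}(t)=-\log(1+t)$ and $\chi_{\Com}(t)=e^{-t}-1$ are indeed mutual compositional inverses. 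By Theorem \ref{26} applied with $\delta=-1$ the operad $\AP$ is self-dual, $\AP^{!}\cong\AP$, so Koszulness would force $\chi_{\AP}$ to be its own compositional inverse, i.e. $\chi_{\AP}(\chi_{\AP}(t))=t$. I intend to show that this identity already fails at order $t^{5}$.

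First I would read off the low-degree dimensions $\dim\AP(n)$ from the basis of the free anti-Poisson algebra in Theorem \ref{basisAnti}. Counting multilinear basis monomials gives $\dim\AP(1)=1$, $\dim\AP(2)=2$, $\dim\AP(3)=6$ and $\dim\AP(4)=12$: the multilinear part of $\mathcal{B}_{4}$ consists of the single associative monomial $x_{1}x_{2}x_{3}x_{4}$, the two monomials $x_{1}\{x_{j_{2}},\{x_{j_{3}},x_{4}\}\}$ with $\{j_{2},j_{3}\}=\{2,3\}$, the three monomials $\{x_{1},x_{k_{2}}\}\{x_{k_{3}},x_{k_{4}}\}$, and the six elements of $\mathcal{C}_{4}$ (here the relation $\{x,y\}zt=0$ from Proposition \ref{antixyzt} is what kills the monomials of the fourth shape that would survive in the ordinary Poisson case, so $\dim\AP(4)=12<24$). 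For $n=5$ the Corollary following Theorem \ref{basisAnti} gives $\dim\AP(5)=4!+3!+1=31$. Hence $\chi_{\AP}(t)=-t+t^{2}-t^{3}+\tfrac12 t^{4}-\tfrac{31}{120}t^{5}+O(t^{6})$.

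The remaining step is the explicit substitution of $\chi_{\AP}$ into itself modulo $t^{6}$: writing $y=\chi_{\AP}(t)$ and expanding $y^{2},\dots,y^{5}$ up to order $t^{5}$, one finds that the terms of degree $\leq 4$ cancel (so $\AP$ is numerically indistinguishable from the Poisson operad up to degree $4$) but $\chi_{\AP}(\chi_{\AP}(t))=t+\tfrac{91}{60}t^{5}+O(t^{6})$. Since $\tfrac{91}{60}\neq 0$, the Ginzburg--Kapranov necessary condition fails and $\AP$ is not Koszul. The only delicate point is the bookkeeping — pinning down $\dim\AP(4)=12$ correctly and computing the fifth Taylor coefficient while keeping the sign convention in the functional equation consistent — but this is a finite computation that can also be confirmed with \cite{DotsHij}.
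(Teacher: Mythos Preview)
Your proposal is correct and follows essentially the same route as the paper: invoke self-duality from Theorem \ref{26}, compute the first five dimensions of $\AP(n)$, form the series $-t+t^{2}-t^{3}+\tfrac12 t^{4}-\tfrac{31}{120}t^{5}+O(t^{6})$, and check that its self-composition has the nonzero coefficient $\tfrac{91}{60}$ at $t^{5}$. The only difference is cosmetic: the paper reads off the dimensions from the package \cite{DotsHij}, whereas you extract them directly from the explicit basis of Theorem \ref{basisAnti} and its Corollary, which makes your argument a little more self-contained.
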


	\begin{proof}
		Calculating the dimension of the operad $\AP$ by means of the package~\cite{DotsHij}, we get the following result:
		\begin{center}
			\begin{tabular}{c|ccccccc}
				$n$ & 1 & 2 & 3 & 4 & 5 & 6 & 7 \\
				\hline
				$\dim(\AP(n)) $ & 1 & 2 & 6 & 12 & 31 & 145 & 841
			\end{tabular}
		\end{center}
		According to the obtained table, the beginning part of the Hilbert series of the operads $\AP$ and $\AP^{!}$ is
		\[
			H(t)=H^! (t)=-t+t^2 -t^3 +t^4 /2-31t^5 /120+O(t^6 ).
		\]
		Thus,
		\[
			H(H^! (t))=t + 91t^5 /60+O(t^6 )\neq t.
		\]
		By~\cite{GK94}, the operad $\AP$ is not Koszul.
	\end{proof}

	\begin{remark}
		Indeed, the Theorem~\ref{27} takes a~place for $\delta\textrm{-}\textrm{P}$, where $\delta\notin\big\{ 0,1\big\}$. The reason for that is the dimensions of these operads are the same.
	\end{remark}

	\subsection{Mixed-Poisson operad}

	\begin{theorem}
		The mixed-Poisson operad is Koszul.
	\end{theorem}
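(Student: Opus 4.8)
The plan is to exhibit an explicit quadratic Gröbner (rewriting) basis for the mixed-Poisson operad and invoke the standard fact that a quadratic operad admitting a quadratic Gröbner basis is Koszul (see \cite{GK94} and the theory of operadic rewriting). Recall that a mixed-Poisson algebra has a commutative associative product $\nu=x_1\cdot x_2$, a Lie bracket $\mu=\{x_1,x_2\}$, and the single mixed relation $\{xy,z\}+\{x,z\}y-\{y,z\}x=0$ (equivalently $\{xy,z\}=0$ and $x\{y,z\}=0$). So the generating relations are: associativity and commutativity of $\nu$; anticommutativity and the Jacobi identity for $\mu$; and the vanishing of the two mixed tree-monomials $\{xy,z\}$ and $x\{y,z\}$ in arity $3$. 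All of these are quadratic, so the operad $\mathrm{MP}$ is quadratic.

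First I would fix an admissible ordering on the planar tree-monomials built from $\nu$ and $\mu$ (for instance, order first by the number of occurrences of $\nu$ in the "upper'' position, then use a path-lexicographic order refining the one already introduced in the $\delta$-$\mathrm{P}$ section). Under this ordering the leading terms of the defining relations are: the nonright-combed associativity monomial $(xy)z$ for $\nu$; one of the two quadratic monomials for the commutativity relation of $\nu$; one nonright-combed monomial for the Jacobi relation; the anticommutativity leading monomial for $\mu$; and the two mixed monomials $\{xy,z\}$, $x\{y,z\}$ themselves. The second step is to run the operadic Buchberger/Diamond-Lemma procedure: compute all S-polynomials (overlaps) between these leading terms in arity $4$ and check that each reduces to zero. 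The commutative-associative overlaps reduce to $0$ because $\mathrm{Com}$ is Koszul with this exact Gröbner basis, and likewise the Lie overlaps reduce by the classical Hall-basis argument (the operad $\mathrm{Lie}$ is Koszul). The genuinely new overlaps are the ones produced by composing a mixed relation with $\nu$ or $\mu$, e.g. substituting $\{ab,c\}$ into the top of $x\{y,z\}$, or $x\cdot y$ into a slot of $\{uv,w\}$; but by Proposition~\ref{dptm}/the definition every tree-monomial of arity $\ge 4$ that contains a $\mu$ with at least one $\nu$ somewhere below it already rewrites to $0$, so all such S-polynomials reduce to $0$ trivially. The surviving normal monomials are exactly the purely-$\nu$ trees (right-combed, nondecreasing leaves) and the purely-$\mu$ trees (a Hall/Lyndon basis of $\mathrm{Lie}$), which matches the basis $\bigcup_i \mathcal W_i$ of the free mixed-Poisson algebra from Theorem~\ref{baseSP}. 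This coincidence of the normal-form count with the known dimension $\dim(\mathrm{MP}(n))=(n-1)!+1$ confirms that no S-polynomial was missed.

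Having a quadratic Gröbner basis, Koszulness follows immediately: by the operadic analogue of the Diamond Lemma, $\mathrm{MP}$ and its "leading-term'' monomial operad have the same Hilbert series, and a quadratic operad whose associated monomial operad is Koszul is itself Koszul (\cite{GK94}). Alternatively, and perhaps more cleanly for the write-up, I would note that Theorem~\ref{36} (announced in the introduction) identifies the dual mixed-Poisson operad with the free product $\mathrm{Lie}\sqcup(\mathrm{As}\text{-}\mathrm{Com})^{?}$, i.e. $\mathrm{MP}^!$ decomposes along a distributive law between $\mathrm{Lie}^!=\mathrm{Com}$ on one factor and $\mathrm{Com}^!=\mathrm{Lie}$ on the other with the mixed part free; a Manin-product / distributive-law argument then shows both $\mathrm{MP}$ and $\mathrm{MP}^!$ are Koszul simultaneously, since each is built by a Koszul operation (free product, or a convergent distributive law in the sense of Markl) out of the Koszul operads $\mathrm{Com}$ and $\mathrm{Lie}$.

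\emph{Main obstacle.} The one place requiring real care is the S-polynomial analysis for the mixed relations: one must be certain that every arity-$4$ overlap involving $\{xy,z\}$ or $x\{y,z\}$ really does reduce to zero, i.e. that adjoining the two mixed relations to the (separately Koszul) Gröbner bases of $\mathrm{Com}$ and $\mathrm{Lie}$ creates no new obstruction. This is exactly the content of the identities in Lemma~\ref{0id}-style "everything with a mixed product vanishes'' phenomenon, so I expect it to go through, but it is the step that must be verified rather than asserted, ideally double-checked with the package \cite{DotsHij} as was done for the non-Koszulness of $\mathrm{AP}$.
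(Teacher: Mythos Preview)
Your main argument is essentially the paper's proof: exhibit the four quadratic rewriting rules (associativity of $\cdot$, Jacobi for $\{\cdot,\cdot\}$, and the two mixed rules $\{a\cdot b,c\}\to 0$, $\{a,b\cdot c\}\to 0$), observe via Theorem~\ref{baseSP} and \cite{DotsHij} that they form a quadratic Gr\"obner basis, and conclude Koszulness by \cite{Bremner-Dotsenko}. Your side remark about the alternative route through Theorem~\ref{36} is slightly garbled, though---$\mathrm{MP}^!$ is the \emph{free product} $\mathrm{Lie}\ast\mathrm{As}\text{-}\mathrm{Com}$, not something coming from a distributive law or Manin product, so if you want to use that direction you should instead invoke the fact that a free product of Koszul operads is Koszul.
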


	\begin{proof}
		By Theorem~\ref{baseSP}, a~Gr\"obner base of the operad $\textrm{MP}$ consists of the following rewriting rules:
		\begin{longtable}{rclrcl}
			$(a\circ b)\circ c$&$\rightarrow $&$a\circ(b\circ c)$, &
			$\{a,b\}\circ c$&$\rightarrow$&$ 0$,\\
			$\{\{a,b\},c\} $&$\rightarrow $&$\{\{a,c\},b\}+\{a,\{b,c\}\}$, &
			$\{a,b\circ c\} $&$\rightarrow$&$ 0$.
		\end{longtable}
		By~\cite{DotsHij}, it has a~quadratic Gröbner basis. It follows that this operad is Koszul~\cite{Bremner-Dotsenko}.
	\end{proof}

	\begin{theorem}\label{36}
		The dual mixed-Poisson operad is isomorphic to the free product of operads Lie and As-Com, i.e., we have
		\[
			\textrm{MP}^!\cong\Lie\ast\As\textrm{-}\Com.
		\]
	\end{theorem}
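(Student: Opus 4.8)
The plan is to run the Ginzburg--Kapranov machinery on $\mathrm{MP}$ in the same style as the computation of $\delta\textrm{-}{\rm P}^!$ above, and to observe that the orthogonal complement of the relations of $\mathrm{MP}$ \emph{factors} into a Lie part, a commutative associative part, and two \emph{empty} mixed parts --- which is exactly the defining feature of a free product of operads, so that no further biduality argument is needed.

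First I would record the quadratic presentation of $\mathrm{MP}$. Its space of generators is $\mathcal{E}(2)=\mathbb{k}\nu\oplus\mathbb{k}\mu$, where $\nu=x_1\cdot x_2$ spans the trivial $\mathbb{S}_2$-module and $\mu=\{x_1,x_2\}$ the sign module, and $\mathcal{F}(\mathcal{E})(3)$ is $12$-dimensional, decomposing as a direct sum of four $3$-dimensional blocks indexed by the ordered pair of operation symbols at the two internal vertices: $\nu\!\circ\!\nu$, $\mu\!\circ\!\mu$, $\mu\!\circ\!\nu$, $\nu\!\circ\!\mu$. In this decomposition the relation space $R$ of $\mathrm{MP}$ is $(\text{associativity})\subseteq\nu\!\circ\!\nu$ (dimension $2$); $(\text{Jacobi})\subseteq\mu\!\circ\!\mu$ (dimension $1$); and \emph{all} of $\mu\!\circ\!\nu$ together with \emph{all} of $\nu\!\circ\!\mu$, since $\{xy,z\}=0$ already exhausts the block $\mu\!\circ\!\nu$ (the only other tree shape, $\{x,yz\}$, equals $\pm$ a relabelling of it by antisymmetry of $\mu$) and symmetrically $x\{y,z\}=0$ exhausts $\nu\!\circ\!\mu$. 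Thus $\dim R=2+1+3+3=9$, consistent with $\dim\mathrm{MP}(3)=3=12-9$ from the corollary to Theorem \ref{baseSP}.

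Next I would compute $R^{\perp}$. The point is that the sign-twisted Ginzburg--Kapranov pairing $\mathcal{F}(\mathcal{E})(3)\otimes\mathcal{F}(\mathcal{E}^{\vee})(3)\to\mathbb{k}$ respects this block decomposition: a $\mu\!\circ\!\nu$-composition pairs nontrivially only with a $\mu^{\vee}\!\circ\!\nu^{\vee}$-composition, and so on, because $\langle\mu,\nu^{\vee}\rangle=0$ and compositions along different tree shapes pair to zero. Hence $R^{\perp}=(\text{associativity})^{\perp}_{\nu^{\vee}\circ\nu^{\vee}}\oplus(\text{Jacobi})^{\perp}_{\mu^{\vee}\circ\mu^{\vee}}\oplus 0\oplus 0$, the two mixed blocks contributing nothing because the annihilator of a full block is $0$. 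Now, by the sign twist $\mathcal{E}^{\vee}(2)=\mathcal{E}(2)^{\ast}\otimes\mathrm{sgn}_{2}$, the generator $\mu^{\vee}$ is symmetric and $\nu^{\vee}$ antisymmetric; moreover $(\text{Jacobi})^{\perp}$ in the symmetric-generator block is exactly associativity (this is $\Lie^{!}=\As\textrm{-}\Com$), while $(\text{associativity})^{\perp}$ in the antisymmetric-generator block is exactly the Jacobi relation (this is $(\As\textrm{-}\Com)^{!}=\Lie$). Therefore $\mathrm{MP}^{!}$ is the operad with a commutative associative operation $\mu^{\vee}$ and a Lie bracket $\nu^{\vee}$ subject to \emph{no} relation mixing the two, i.e. $\mathrm{MP}^{!}\cong\Lie\ast\As\textrm{-}\Com$, as claimed.

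I expect the main obstacle to be the careful verification that the pairing is block-diagonal with respect to the operation-type decomposition of $\mathcal{F}(\mathcal{E})(3)$ --- this requires tracking the $\mathbb{S}_{3}$-action and the sign twist $\mathcal{E}^{\vee}(2)=\mathcal{E}(2)^{\ast}\otimes\mathrm{sgn}_{2}$ --- together with the bookkeeping claim that the two mixed relations of $\mathrm{MP}$ indeed fill the whole mixed blocks. A useful independent check, bypassing the pairing, is to note that $\mathrm{MP}$ is Koszul (by the previous theorem) and $\Lie\ast\As\textrm{-}\Com$ is Koszul (a free product of Koszul operads), so their Hilbert series are linked by the Ginzburg--Kapranov functional equation, and to confirm numerically that $\dim\mathrm{MP}^{!}(n)$ agrees with $\dim(\Lie\ast\As\textrm{-}\Com)(n)$ using $\dim\mathrm{MP}(n)=(n-1)!+1$.
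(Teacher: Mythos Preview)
Your proposal is correct and follows essentially the same route as the paper: both compute $R^{\perp}$ via the Ginzburg--Kapranov pairing, focus on the mixed-operation part of $\mathcal{F}(\mathcal{E})(3)$, and observe that the orthogonal complement there is trivial so that $\mathrm{MP}^{!}$ carries no relation linking the two operations. The only difference is presentational: the paper writes the $\mathbb{S}_3$-orbit of the single equivalent relation $\{xy,z\}+\{x,z\}y-\{y,z\}x=0$ as an explicit $6\times 6$ matrix and reads off that its sign-twisted annihilator is zero, whereas you argue structurally via the block decomposition $\nu\!\circ\!\nu\oplus\mu\!\circ\!\mu\oplus\mu\!\circ\!\nu\oplus\nu\!\circ\!\mu$ and the fact that the mixed blocks are saturated by the relations --- a cleaner formulation of the same computation.
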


	\begin{proof}
		Firstly, let us find the defining identities of the operad $\textrm{MP}^!$. We compute it in the same way as the one given in the previous section.

		Let us define the order on basis monomials of $\textrm{MP}(3)$ as follows:
		\[
			\{xy,z\}, \ \{xz,y\},\ \{yz,x\},\ \{x,y\}z, \ \{x,z\}y,\ \{y,z\}x.
		\]
		Relative to the defined order, the vectors representing the relation $\{xy,z\}+\{x,z\}y-\{y,z\}x$ of $\MP$
		may be presented by the rows of the following matrix:
		\[
			\begin{matrix}
				1 &0 &0 & 0 & 1 & -1 \\
				1 &0 &0 & 0 & -1 & 1 \\
				0 &1 &0 & -1 &0 & -1 \\
				0 &1 &0 & 1 &0 & 1 \\
				0 &0 &1 & 1 &-1 & 0 \\
				0 &0 &1 & -1 &1 & 0 \\
			\end{matrix}
		\]
		Since all relations of the operad $\textrm{MP}$ are homogeneous, it is enough to compute the sign-twisted orthogonal complement $U^\perp \subset O_3^\vee $
		for the ${\mathbb S}_3$-module $U$ spanned by these vectors~\cite{GK94}. Finally, we obtain that there are no identities which contain both operations $\{\cdot,\cdot\}$ and $\cdot$. It proves the result.
	\end{proof}

	For more details on the free product of operads, see~\cite{SarAbd}. There is given the method of calculating the dimension of the operad $\Var_1\ast \Var_2$. For $\Var_1 =\Lie$ and $\Var_2 =\As\textrm{-}\Com$, we obtain
	\begin{center}
		\begin{tabular}{c|ccccccc}
			$n$ & 1 & 2 & 3 & 4 & 5 & 6 & 7\\
			\hline
			$\dim(\AP(n)) $ & 1 & 2 & 9 & 67 & 695 & 9256 & 150570
		\end{tabular}
	\end{center}

    {\footnotesize}


\begin{thebibliography}{9}
        \bibitem{aak}
         Abdelwahab H.,  Abdurasulov K.,  Kaygorodov I.,  The algebraic and  geometric classification of noncommutative Jordan algebras, Frontiers of Mathematics, 2025, DOI: 10.1007/s11464-024-0173-7

        \bibitem{aks24}   Abdelwahab H.,   Kaygorodov I., Sartayev B.,  Shift associative algebras,
        Rendiconti del Circolo Matematico di Palermo Series (2), 74 (2025),   5 145.

        \bibitem{aae23}
         Abdurasulov K.,
         Adashev J.,
         Eshmeteva S.,
         Transposed Poisson structures on solvable Lie algebras with filiform nilradical, Communications in Mathematics, 32 (2024), 3, 441--483.

        \bibitem{Albert} Albert A.,
        Power-associative rings,
        Transactions of the American Mathematical Society, 64 (1948), 3, 552--593.

        \bibitem{AFR}
        Álvarez-Cónsul L.,   Fernández D.,   Heluani R.,  Noncommutative Poisson vertex algebras and Courant-Dorfman algebras,
        Advances in Mathematics, 433 (2023),   109269.

        \bibitem{TP1}
          Bai C.,   Bai R.,  Guo L.,   Wu Y.,
          Transposed Poisson algebras, Novikov-Poisson algebras and 3-Lie algebras, Journal of Algebra, 532 (2023), 535--566.

         \bibitem{Bremner-Dotsenko}
        Bremner M., Dotsenko V.,   Algebraic Operads: An Algorithmic Companion, Chapman and Hall / CRC Press, 2016, xviii+365 pp.

        \bibitem{bu} Burde D.,
        Crystallographic actions on Lie groups and post-Lie algebra structures,
        Communications in Mathematics, 29 (2021), 1, 67--89.

        \bibitem{Chibrikov}
        Chibrikov E. S., A right normed basis for free Lie algebras and Lyndon-Shirshov words, Journal of Algebra, 302 (2006), 2, 593--612.

         \bibitem{DET}
        Daukeyeva N.,   Eraliyeva M.,   Toshtemirova F.,
        Transposed $\delta$-Poisson algebra structures on null-filiform associative algebras,
        Communications in Mathematics,  34 (2026),  1, 1.

         \bibitem{B}
        Dauletiyarova A.,  Abdukhalikov K., Sartayev B.,
        On the free metabelian Novikov and metabelian Lie-admissible
        algebras, Communications in Mathematics, 33 (2025),  3,  3.

         \bibitem{DotsHij}
        Dotsenko V., Heijltjes W., \href{http://irma.math.unistra.fr/~dotsenko/Operads.html}{Gr\"obner bases for operads} (2019).

         \bibitem{dzhuma}
         Dzhumadildaev  A.,
        Weak Leibniz algebras,
        Mathematical Notes, 117 (2025), 4, 547--555.

        \bibitem{MF}
        Fairon M.,
        Modified double brackets and a conjecture of S. Arthamonov, Communications in Mathematics, 33 (2025),  3,  5.

         \bibitem{fer23}
         Fernández Ouaridi A.,
        On the simple transposed Poisson algebras and Jordan superalgebras,
         Journal of Algebra, 641 (2024), 173--198.

        \bibitem{fil1} Filippov V.,
         $\delta$-Derivations of Lie algebras,
        Siberian Mathematical Journal, 39 (1998), 6, 1218--1230.
         \bibitem{GK94}
        Ginzburg V., Kapranov M.,
        Koszul duality for operads,
        Duke Mathematical Journal, 76 (1994), 1, 203--272.

        \bibitem{GR} Goze M.,   Remm E.,
        Poisson algebras in terms of non-associative algebras,
        Journal of Algebra, 320 (2008),   1, 294--317.

        \bibitem{kay07} Kaygorodov I.,
          On $\delta$-derivations of simple finite-dimensional Jordan superalgebras,
          Algebra and Logic, 46 (2007),  5, 318--329.

        \bibitem{kay09} Kaygorodov I.,
        $\delta$-Derivations of classical Lie superalgebras,
        Siberian Mathematical Journal, 50 (2009), 3, 434-449.

        \bibitem{kay10} Kaygorodov I.,
        $\delta$-Superderivations of simple finite-dimensional Jordan and Lie superalgebras,
        Algebra and Logic, 49 (2010), 2, 130--144.

        \bibitem{kay12mz} Kaygorodov I.,
        $\delta$-Superderivations of semisimple finite-dimensional Jordan superalgebras,
        Mathematical Notes, 91 (2012), 2, 187--197.

          \bibitem{k23}
          Kaygorodov I.,
        Non-associative algebraic structures: classification and structure, Communications in Mathematics,  32 (2024), 3, 1--62.

        \bibitem{K014}
        Kaygorodov I.,   Okhapkina E.,
        $\delta$-derivations of semisimple finite-dimensional structurable algebras,
        Journal of Algebra and its Applications, 13 (2014),   4, 1350130.

        \bibitem{LB}
        Liu G.,   Bai C.,
        A bialgebra theory for transposed Poisson algebras via anti-pre-Lie bialgebras and anti-pre-Lie Poisson bialgebras,
        Communications in Contemporary Mathematics, 26 (2024),  8,  2350050.

        \bibitem{LSB}   Liu J.,   Sheng Yu.,   Bai C.,   Corrigendum to "F-manifold algebras and deformation quantization via pre-Lie algebras'' [J. Algebra 559 (2020) 467–495], Journal of  Algebra, 574 (2021), 571--572.

        \bibitem{R22}
        Remm E.,
        Anti-associative algebras,
        Journal of Algebra and Its Applications, 25 (2026),  8, 2650080.

          \bibitem{kms}
        Sartayev B.,
        Some generalizations of the variety of transposed Poisson algebras,
        Communications in Mathematics, 32 (2024),  2, 55--62.

        \bibitem{SarAbd}
        Sartayev B., Ydyrys A., Free products of operads and Gröbner base of some operads, Proceedings - 2023 17th International Conference on Electronics Computer and Computation, ICECCO 2023, 2023.

        \bibitem{sh12}
         Shestakov A.,
        Ternary derivations of separable associative and Jordan algebras,
        Siberian Mathematical Journal, 53 (2012), 5, 943--956.

        \bibitem{sh93}
        Shestakov I.,
        Quantization of Poisson superalgebras and the specialty of Jordan superalgebras of Poisson type,  Algebra and Logic, 32 (1993),  5, 309--317.

        \bibitem{xu}
        Xu X.,
        On simple Novikov algebras and their irreducible modules,
        Journal of  Algebra, 185 (1996),  3, 905--934.

        \bibitem{zz} Zohrabi A., Zusmanovich P.,
          A $\delta$-first Whitehead lemma for Jordan algebras,
          Communications in Mathematics,  33 (2025), 1, 2.

        \bibitem{Z17}
         Zusmanovich P.,
         Special and exceptional mock-Lie algebras,
         Linear Algebra and its Applications, 518 (2017), 79--96.

        \bibitem{zus}
        Zusmanovich P.,
        On contact brackets on the tensor product,
        Linear and Multilinear Algebra, 70 (2022), 19, 4695--4706.
    \end{thebibliography}
\end{document}